\numberwithin{equation}{section}
\newtheorem{definition}{Definition}
\newtheorem{lemma}{Lemma}[section]
\newtheorem{proposition}{Proposition}[section]
\newtheorem{theorem}{Theorem}
\newtheorem{assumption}{Assumption}
\newtheorem{corollary}{Corollary}[section]
\theoremstyle{definition}
\theoremstyle{definition}\newtheorem{remark}{Remark}[section]
\DeclareMathOperator*{\argmin}{arg\,min}
\DeclareMathOperator*{\arginf}{arg\,inf}
\title{A Sieve M-Estimator for Entropic Optimal Transport}
\author{Rami V. Tabri \\
 Department of Econometrics and Business Statistics, Monash University, \\ Clayton, Victoria, Australia, \\ Email: rami.tabri@monash.edu.}
\begin{document}
\maketitle
\begin{abstract}
Entropically regularized optimal transport between probability measures supported on compact subsets of Euclidean space admits a representation as an information projection under moment inequality constraints. Exploiting this structure, I develop a sieve-based approximation of the Fenchel dual, yielding a sequence of finite-dimensional convex programs whose sample analogues provide tractable estimators of the regularized optimal value and associated dual optimizers. Under minimal assumptions—compact support and continuity of the cost function—I establish almost sure consistency of these estimators. I further derive finite-sample bounds for the estimation error of the optimal value, featuring only logarithmic dependence on sieve complexity, and obtain asymptotic stochastic bounds characterized by suprema of centered Gaussian processes. The results furnish general statistical guarantees for sieve-based estimation of entropic optimal transport and apply to settings not covered by existing theory for the empirical Sinkhorn divergence and other sieve-based methods.
\end{abstract}

\section{Introduction}
Optimal transport (\citealp{villani2009optimal}) has emerged as a fundamental tool in modern statistics, supporting methodologies for distributional comparison, generative modeling, domain adaptation, and causal inference. Given two Borel probability spaces, $(\mathcal{X},\mathcal{B}(\mathcal{X}),P_X)$ and $(\mathcal{Y},\mathcal{B}(\mathcal{Y}),P_Y)$, with $\mathcal{X}\subset\mathbb{R}^{d_x}$ and $\mathcal{Y}\subset\mathbb{R}^{d_y}$, and a cost function $c:\mathcal{X}\times\mathcal{Y}\rightarrow \mathbb{R}_+$, the optimal transport problem is an infinite-dimensional linear optimization problem
\begin{align}\label{eq - OTP}
\inf_{P\in\Pi(P_X,P_Y)}\int_{\mathcal{X}\times\mathcal{Y}}c(x,y)\,dP,
\end{align}
where the infimum is taken over the set $\Pi(P_X,P_Y)$ of couplings between $P_X$ and $P_Y$. Central to its recent success in practice is the paradigm of entropic regularization, popularized by~\cite{Cuturi-2013}, enabling computational advances
for data-rich applications in areas like machine learning or image processing (\citealp{PeyreCuturiComputationalOT}). For $\gamma>0$, the entropically regularized optimal transport (EOT) problem is 
\begin{align}\label{eq - Entropc OTP}
\inf_{P\in\Pi(P_X,P_Y)}\int_{\mathcal{X}\times\mathcal{Y}}c(x,y)\,dP+\gamma^{-1} H(P|P_X\otimes P_Y),
\end{align} 
where $\gamma$ is the entropic regularization parameter, and  $H(\cdot|P_X\otimes P_Y)$ denotes the relative entropy with respect to the product measure
\begin{align*}
H(P|P_X\otimes P_Y)=\begin{cases}
\int_{\mathcal{X}\times\mathcal{Y}}\frac{dP}{d(P_X\otimes P_Y)}\log\left(\frac{dP}{d(P_X\otimes P_Y)}\right)\,d(P_X\otimes P_Y) &\mbox{if } P\ll P_X\otimes P_Y\\
\infty &\mbox{elsewhere}.
\end{cases}
\end{align*}
For fixed $\gamma$, the problem (\ref{eq - Entropc OTP}) yields a regularized optimal value, dual potentials, and a transport plan that approximate those of
the problem~(\ref{eq - OTP}), with large values of $\gamma$ providing more accurate approximations (e.g.,~\citealp{LEONARD}). 

\par Entropic regularization enables scalable computation and smooths the geometry of the transport problem, and it is now routinely used in statistical practice via the empirical Sinkhorn divergence. Given samples, $X_1,\ldots,X_N$ i.i.d. from $P_X$ and 
$Y_1,\ldots,Y_N$ i.i.d. from $P_Y$, and their empirical distributions, $\hat{P}_X$ and $\hat{P}_Y$, respectively, the empirical Sinkhorn divergence solves the sample analogue of the EOT problem~(\ref{eq - Entropc OTP}): it replaces the population distributions $P_X$ and $P_Y$ in the EOT problem with $\hat{P}_X$ and $\hat{P}_Y$, respectively, and then applies the Sinkhorn algorithm to approximate the regularized optimal value, transport plan, and dual potentials. Despite its widespread use, relatively little is known about the statistical properties of the EOT problem beyond the narrow, though important, class of smooth cost functions. 

\par Existing theory offers only partial answers: central limit theorems are available for quadratic costs and sub-Gaussian probability measures (e.g.,~\citealp{delBarrioGonzalezSanzLoubesNilesWeed2023,GoldfeldKatoRiouxSadhu2024}), and more general cost functions have been analyzed only in discrete and semi-discrete settings (e.g.,~\citealp{BigotKlein2019,Bercu-Bigot}). Furthermore, sample-complexity bounds have been established under smooth costs and compactly supported measures (e.g.,~\citealp{GenevayCuturiPeyreBach2019}) and for quadratic costs with sub-Gaussian measures (e.g.,~\citealp{MenaNilesWeed2019}), ruling out many cost functions used in applications such as economics (\citealp{GalichonMongeAmpere,ChiapporiSalanieMatching}), imaging (\citealp{PeyreCuturiComputationalOT}), and machine learning (\citealp{Arjovsky2017WGAN,FlamaryCourtyOTML}). On the other hand, methods based on reproducing kernel Hilbert space (RKHS) parameterizations (\citealp{Genevey}) or neural networks (\citealp{Seguy}) are computationally flexible and accommodate general continuous cost functions, but impose continuity of the dual potentials as a structural assumption, motivated by optimization considerations. These methods lack statistical guarantees beyond consistency, which holds only under the correct specification of the potentials' continuity. As a result, the statistical behavior of EOT estimators under general continuous costs and general marginal distributions remains essentially unknown. This paper addresses this gap in the setting of compactly supported marginal distributions.

\par I develop a nonparametric estimation framework for the EOT value under minimal conditions: the cost function is continuous and the marginal distributions have compact support. The method is grounded in a representation of the EOT problem as an information projection under moment constraints--a classical perspective (e.g.,~\citealp{csiszar1975}) that, to the best of my knowledge, has not previously been exploited for statistical estimation of entropic optimal transport. This formulation enables the use of sieve methods and empirical process theory to approximate the Fenchel dual of the information-projection problem. The resulting estimator is computationally tractable and applies to continuous cost functions with arbitrary compactly supported marginal distributions, thereby extending statistical optimal transport to settings not covered by existing theory.

\par The present analysis develops a stochastic approximation scheme for the EOT problem~\eqref{eq - Entropc OTP} under the following minimal conditions.
\begin{assumption}\label{Assump -Primities OTP}
(i) $\mathcal{X}$ and $\mathcal{Y}$ are compact, (ii) $c:\mathcal{X}\times\mathcal{Y}\rightarrow\mathbb{R}_+$ is continuous.
\end{assumption}
\noindent Under Assumption~\ref{Assump -Primities OTP}, the EOT problem~\eqref{eq - Entropc OTP} admits a unique solution, denoted by $P_\gamma$. Rewriting the objective as
\begin{align}\label{eq - OT Obj rewrite}
\gamma^{-1}\log a_\gamma+\gamma^{-1}H(P\mid R_\gamma),
\end{align}
where the reference probability $R_\gamma$ satisfies
\begin{align*}
\frac{dR_\gamma}{d(P_X\otimes P_Y)}(x,y)\coloneqq \exp\left\{-\gamma c(x,y)\right\}/a_\gamma,\quad (x,y)\in\mathcal{X}\times\mathcal{Y} 
\end{align*}
with $a_\gamma\coloneqq\int_{\mathcal{X}\times\mathcal{Y}}\exp\left\{-\gamma c(x,y)\right\}\,d(P_X\otimes P_Y)(x,y)$,
reveals that $P_\gamma$ is also the solution to the information projection ($I$-projection) problem
\begin{align}\label{eq - Static Schrodinger prob}
\min\left\{H(P\mid R_\gamma):P\in\Pi(P_X,P_Y)\right\}.
\end{align}

\par The stochastic approximation scheme developed in this paper applies the \emph{method of sieves} (\citealp{Grenander81}) to the Fenchel dual of the $I$-projection problem~\eqref{eq - Static Schrodinger prob}. The construction rests on three main ideas:
\begin{enumerate}[(a)]
\item reformulating the marginal constraints in~\eqref{eq - Static Schrodinger prob} as moment inequality restrictions;
\item reparametrizing the Fenchel dual so that it admits finite-dimensional convex approximations; and
\item estimating these finite-dimensional programs via the \emph{Sample Average Approximation} method (\citealp{Shapiro-Dentcheva-Ruszczynski}).
\end{enumerate}
Together, these steps yield a sieve M-estimator for the Fenchel dual problem. The approximation framework of \cite{Tabri-MOOR-2025} provides a convergent sieve construction for implementing steps (a) and (b). The key insight behind step (a) is that cumulative distribution functions characterize probability measures supported on subsets of Euclidean space, allowing us to rewrite moment equality constraints as pairs of moment inequalities. This reveals that the resulting class of moment functions is uniformly bounded and Vapnik-Chervonenkis (VC), and hence suitably precompact for adapting the general results of \cite{Tabri-MOOR-2025}. Unlike existing approaches based on discretization or parametric approximation of dual potentials, this construction enforces the defining constraints of the EOT problem at the population level and uses the sample only to approximate expectations in the dual objective.

\par That framework reparametrizes the Fenchel dual so that the dual variable becomes a \emph{Gelfand–Pettis integral} (\citealp{Gelfand}; \citealp{Pettis}): a weak vector-valued integral with respect to a positive Radon measure supported on the $L_1(R_\gamma)$-closed convex hull of the moment functions defining the inequality constraints. This representation is advantageous because it admits approximation by Riemann sums in the $L_1(R_\gamma)$ norm, forming the basis of the sieve construction in \cite{Tabri-MOOR-2025}. Specializing this scheme to the present setting produces a sequence of finite-dimensional convex stochastic programs with a common objective and expanding domains. Here, these domains are finite-dimensional spaces generated by selected moment functions, which remain VC because they consist of linear combinations of elements from a VC class.

To apply the SAA method, I begin by solving sample analogues of the sieve-based approximating programs. Because their domains are naturally unbounded, I restrict them in a way that still allows expansion with the sieve approximation. I justify this restriction by connecting the Fenchel dual problem's optimizers to Schr\"{o}dinger potentials (\citealp{nutz2021entropic}), and I establish that these optimizers remain uniformly bounded under Assumption~\ref{Assump -Primities OTP}. With these restrictions, I show that when the sieve grows slowly enough relative to the sample size, a strong uniform law of large numbers (ULLN) holds for the corresponding empirical processes. This ULLN guarantees that optimizing the sample-based objective over the sieve is asymptotically equivalent to optimizing its expectation, the Fenchel dual objective. I provide the precise statement and proof of this ULLN in Appendix~\ref{Appendix - ULLN}. Notably, it accommodates sequences of function classes whose index sets evolve with sample size--a situation not addressed by existing ULLN results, to the best of my knowledge.

\subsection{Summary of Contributions}

\par The contributions of this paper are threefold.

\smallskip

\noindent 1. I establish the almost sure convergence of the proposed stochastic approximation scheme. In particular, under random sampling from $R_\gamma$ with sample size $N$,
\begin{enumerate}[(i)]
\item the optimal value of the dual problem is consistently estimated by the optimal values of a sequence of finite-dimensional SAA convex programs, and
\item every weak-star accumulation point of the corresponding sequence of optimal solutions is an optimal solution of the dual problem,
\end{enumerate}
as $N\to\infty$, with probability 1. 

\smallskip

\noindent 2. Using empirical process techniques, I derive a finite-sample rate in terms of mean-convergence for the estimator of the EOT value. The rate separates stochastic variation from sieve approximation error, yielding a nonparametric bias-variance trade-off. The stochastic component is controlled by Massart's finite class lemma (see \citealp{Massart2000}, and Theorem~3.3 in~\cite{MohriRostamizadehTalwalkar2018}), giving a bound $\sqrt{2\log(n)/N}$, where $n$ is the dimension of the approximating sieve function class. This yields rates slower than the classical $\sqrt{N}$ benchmark--typical in nonparametric estimation--while exhibiting unusually mild dependence on model complexity due to the logarithmic factor.

\smallskip

\noindent 3. I establish matching upper and lower stochastic bounds for the estimation error of the dual problem's optimal value, with leading terms given by the suprema of centered Gaussian processes indexed by the sieve class, divided by $\sqrt{N}$. These results follow from the Gaussian approximation of suprema of empirical processes in \cite{chernozhukov2014gaussian} and can be used to construct interval estimators for the EOT value. Importantly, all results require only continuity of the cost and compact supports, making the methodology applicable in settings for which no statistical theory currently exists.

\subsection{Organization of the paper}

Section~\ref{Section Framing as MIs} develops the moment-inequality
representation of the EOT problem and
establishes key structural properties of the associated Fenchel dual problem via a sequence of
propositions. These results lay the analytical foundation for the statistical
analysis. Section~\ref{Section Sieve M-Estimation} introduces the sieve M-estimator and states the main result of the paper, Theorem 1, which establishes consistency, the convergence rate, and stochastic bounds. Section~\ref{Section Discussion} discusses the scope of the main result, its
implications for statistical optimal transport, and potential extensions. The Appendix collects proofs of the theorem and all propositions to streamline the exposition, as well as all technical lemmas, derivations, and a numerical illustration.

\section{Moment Inequality Information Projection Problem and Approximation}\label{Section Framing as MIs}
 Section 3 of~\cite{csiszar1975} frames the $I$-projection problem~(\ref{eq - Static Schrodinger prob}) in terms of moment equality constraints. He argues that two probability measures on abstract spaces are equal if and only if they have equal moments for \emph{all} integrable moment functions. Since I focus on the setup specified by Assumption~\ref{Assump -Primities OTP}, I can simplify this characterization. The constraint set $\Pi(P_X,P_Y)$ has the following characterization:
\begin{align}\label{eq - constraint set Borel}
\left\{P\in\mathcal{P}\left(\mathcal{X}\times\mathcal{Y}\right):P_1(A)=P_X(A)\,\forall A\in\mathcal{B}(\mathcal{X})\,\text{and}\,P_2(A)=P_Y(A)\,\forall A\in\mathcal{B}(\mathcal{Y})\right\},
\end{align}
where $\mathcal{P}\left(\mathcal{X}\times\mathcal{Y}\right)$ denotes the set of probability measures defined on the measurable space $(\mathcal{X}\times\mathcal{Y},\mathcal{B}(\mathcal{X}\times\mathcal{Y}))$, and $\mathcal{B}(\mathcal{X}\times\mathcal{Y})$ is the Borel sigma-algebra of $\mathcal{X}\times\mathcal{Y}$. Under Assumption~\ref{Assump -Primities OTP}, the collection of sets $\mathcal{B}_X=\left\{A\in\mathcal{B}(\mathcal{X}):A=\times_{i=1}^{d_x}(-\infty,x_i],x\in\mathcal{X}\right\}$ and $\mathcal{B}_Y=\left\{A\in\mathcal{B}(\mathcal{Y}):A=\times_{i=1}^{d_y}(-\infty,y_i],y\in\mathcal{Y}\right\}$ generate the sigma-algebras $\mathcal{B}(\mathcal{X})$ and $\mathcal{B}(\mathcal{Y})$, respectively. Consequently, there is no loss of information in reformulating the constraints in~(\ref{eq - constraint set Borel}) by replacing $\mathcal{B}(\mathcal{X})$ and $\mathcal{B}(\mathcal{Y})$ with $\mathcal{B}_X$ and $\mathcal{B}_Y$, respectively.

\par Now this reformulation of the constraint set is connected to moment inequalities using the cumulative distribution functions of $P_X$ and $P_Y$,
\begin{align*}
F_{X}(x)\coloneqq P_X\left(\times_{i=1}^{d_x}(-\infty,x_i]\right) \quad \text{and}\quad F_Y(y)\coloneqq P_Y\left(\times_{i=1}^{d_y}(-\infty,y_i]\right),
\end{align*}
which are defined on $\mathcal{X}$ and $\mathcal{Y}$, respectively. To clarify, I can express the $I$-projection problem (\ref{eq - Static Schrodinger prob}) as the following infinite-dimensional minimization problem,
\begin{equation}\label{eq - KL Min Problem}
\begin{aligned}
\text{minimize}& \quad m(p)\coloneqq\begin{cases} \int_{\Omega}p\log(p)\,dR_\gamma &\mbox{if } p\geq0,\quad \int_{\Omega}p\,dR_\gamma=1\\
\infty &\mbox{elsewhere},\end{cases} \\
\text{subject to}&\quad \int_{\Omega}g_{x^\prime}\, p\,dR_\gamma\leq 0\;\text{and}\;\int_{\Omega}-g_{x^\prime}\, p\,dR_\gamma\leq 0\quad\forall x^\prime\in\mathcal{X},\\
& \quad \int_{\Omega}g_{y^\prime}\, p\,dR_\gamma\leq 0\;\text{and}\;\int_{\Omega}-g_{y^\prime}\, p\,dR_\gamma\leq 0\quad\forall y^\prime\in\mathcal{Y},\\
&\quad \text{and}\quad p\in L_1(R_\gamma),
\end{aligned}
\end{equation}
where $\Omega\coloneqq\mathcal{X}\times\mathcal{Y}$, the moment functions are given by
\begin{align}\label{eq - moment fxns}
g_{x^\prime}(x,y)=F_{X}(x^\prime)-1\left[x\preceq x^\prime\right]\quad\text{and}\quad g_{y^\prime}(x,y)=F_{Y}(y^\prime)-1\left[y\preceq y^\prime\right],
\end{align}
with the notation "$\preceq$" means $x_i\leq x_i^\prime$ and $y_j\leq y_j^\prime$ for all $i=1,\ldots, d_x$ and $j=1,\ldots, d_y$, respectively, and
\begin{align*}
L_1(R_\gamma)\coloneqq\left\{h:\Omega\rightarrow\mathbb{R}:\,\text{$h$ is measurable $\mathcal{B}(\mathcal{X}\times\mathcal{Y})/\mathcal{B}(\mathbb{R})$ and}\;\int_{\Omega}|h|\,dR_\gamma<\infty\right\}.
\end{align*}
 
\par Let $\mathcal{V}\coloneqq \left\{g_h,-g_h:h\in\mathcal{X}\cup\mathcal{Y}\right\}$. By Lemmas~\ref{Lemma - Precomp} and \ref{Lemma - Closed}, this subset of $L_1(R_\gamma)$ is precompact and closed in the norm topology. These properties of $\mathcal{V}$ are central to this section's results: existence and exponential family representation of the dual optimizers, and approximation. Towards that end, I formulate the $I$-projection's constraint set equivalently as
\begin{align}\label{eq - constraint set M}
\mathcal{M}=\left\{p\in L_{1}(R_\gamma):m(p)<\infty,\;\int_{\Omega}v p\,dR_\gamma\leq 0\;\forall v\in\mathcal{V} \right\}.
\end{align}
The positive conjugate cone of $\mathcal{M}$, using $L_0(R_\gamma)$ as the dual space, is thus defined as $$\mathcal{M}^{\oplus}=\left\{z\in L_0(R_\gamma): \int_{\Omega}z\,p\,dR_\gamma\geq0\;\forall p\in\mathcal{M}\right\}.$$ In light of the form of $\mathcal{M}^{\oplus}$, I consider the dual optimization problem on the following domain
\begin{align}\label{eq - domain}
\mathcal{D}\coloneqq\left\{z\in\mathcal{M}^{\oplus}: z\in\overline{\text{co}}(\mathcal{V})\cdot\alpha,\;\alpha\geq0\right\},
 \end{align}
where $\overline{\text{co}}(\mathcal{V})$ is the closed convex hull of $\mathcal{V}$ in the $L_1(R_\gamma)$-norm. In particular, the dual optimization problem is given by
\begin{align}\label{eq - Fenchel Dual Problem}
\inf\left\{\int_{\Omega}e^{z}\,dR_\gamma;\; z\in\mathcal{D}\right\}.
\end{align}

\par I have the following result on the existence and uniqueness of the solution to the dual problem~(\ref{eq - Fenchel Dual Problem}), and its exponential representation. 
\begin{proposition}\label{Thm - Fenchel}
Let the constraint set $\mathcal{M}$ be given by~(\ref{eq - constraint set M}), and suppose that Assumption~\ref{Assump -Primities OTP} holds.
\begin{enumerate}
\item $\arginf\left\{\int_{\Omega}e^{z}\,dR_{\gamma}: z\in \mathcal{D}\right\}\neq\emptyset$ and the $R_\gamma$-density of $P_\gamma$ has the following representation
\begin{align}\label{eq - Representation}
p_\gamma=\frac{e^{z_{0,\gamma}}}{\int_{\Omega}e^{z_{0,\gamma}}\,dR_\gamma}\quad \text{where}\quad z_{0,\gamma}\equiv\arginf\left\{\int_{\Omega}e^{z}\,dR_\gamma: z\in \mathcal{D}\right\}.
\end{align}
solves the $I$-projection problem~(\ref{eq - KL Min Problem}). 
\item $z_{0,\gamma}\in\overline{\text{span}_{+}(\mathcal{V})}$, where $\overline{\text{span}_{+}(\mathcal{V})}$ is the $L_1(R_\gamma)$-norm closure of the positive linear span of
$\mathcal{V}$.
\end{enumerate}
\end{proposition}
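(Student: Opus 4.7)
The plan is to derive Part 1 from Fenchel duality for $I$-projections with moment-inequality constraints---in the spirit of \cite{csiszar1975}, as adapted to the Gelfand-Pettis reparametrization in \cite{Tabri-MOOR-2025}---and to obtain Part 2 almost immediately by unpacking the definition of $\mathcal{D}$. The strategy proceeds in three stages: verify the constraint qualification, invoke the abstract duality theorem to get attainment and the exponential representation, and then observe that $\mathcal{D}\subseteq\overline{\text{span}_{+}(\mathcal{V})}$.

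The first stage is to check a Slater-type qualification. The product measure $P_X\otimes P_Y$ lies in $\Pi(P_X,P_Y)$ and its Radon-Nikodym derivative with respect to $R_\gamma$ equals $a_\gamma e^{\gamma c}$. Since Assumption~\ref{Assump -Primities OTP} forces $c$ to be bounded on $\Omega$, this density is pinched between positive constants, so $H(P_X\otimes P_Y\mid R_\gamma)$ is finite, giving a feasible primal point with finite objective. Coupled with the boundedness of the moment functions (each $g_h$ takes values in $[-1,1]$) and the $L_1(R_\gamma)$-precompactness and closedness of $\mathcal{V}$ supplied by Lemmas~\ref{Lemma - Precomp} and~\ref{Lemma - Closed}, the abstract $I$-projection duality theorem applies and yields: no duality gap, attainment of the dual infimum at some $z_{0,\gamma}\in\mathcal{D}$, and the stationarity condition delivering $p_\gamma\propto e^{z_{0,\gamma}}$. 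Normalizing to total mass one gives~(\ref{eq - Representation}), and uniqueness of $p_\gamma$ follows from strict convexity of relative entropy together with the fact that $P_\gamma$ itself is the unique solution to~(\ref{eq - Static Schrodinger prob}).

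Part 2 then follows by inspection. Any $z\in\mathcal{D}$ has the form $z=\alpha v$ for some $\alpha\geq 0$ and some $v\in\overline{\text{co}}(\mathcal{V})$. Approximating $v$ in $L_1(R_\gamma)$ by finite convex combinations $v^{(k)}=\sum_i w_i^{(k)}v_i^{(k)}$ with $w_i^{(k)}\geq 0$, $\sum_i w_i^{(k)}=1$, and $v_i^{(k)}\in\mathcal{V}$, continuity of scalar multiplication gives
\[
z=\lim_k \alpha v^{(k)} = \lim_k \sum_i (\alpha w_i^{(k)})\,v_i^{(k)} \in \overline{\text{span}_{+}(\mathcal{V})},
\]
since each finite sum $\sum_i(\alpha w_i^{(k)})v_i^{(k)}$ lies in $\text{span}_{+}(\mathcal{V})$. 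Specializing to $z_{0,\gamma}$ gives the claim.

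The main obstacle I anticipate is establishing \emph{attainment} of the dual infimum, because $\mathcal{D}$ is an unbounded cone and $z\mapsto\int e^z\,dR_\gamma$ is not directly coercive on $\mathcal{D}$: along rays $z=\alpha v$ where $v$ is nonpositive on a set of large $R_\gamma$-mass, $\int e^{\alpha v}\,dR_\gamma$ can fail to grow as $\alpha\to\infty$. The Gelfand-Pettis reparametrization of \cite{Tabri-MOOR-2025} is designed precisely to handle this: it separates the direction $v\in\overline{\text{co}}(\mathcal{V})$, which lives in a norm-compact set by Lemmas~\ref{Lemma - Precomp} and~\ref{Lemma - Closed}, from the scalar $\alpha\geq 0$, reducing existence to keeping $\alpha$ bounded along minimizing sequences. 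The dual-feasibility constraint $z\in\mathcal{M}^{\oplus}$, tested against the feasible density $dP_X\otimes dP_Y/dR_\gamma\in\mathcal{M}$, provides the one-sided control on $\alpha$ needed to close the argument, and making this rigorous is the technical heart of the proof.
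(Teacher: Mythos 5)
Your Part~2 argument is correct and actually cleaner than the paper's: the paper first invokes Part~(ii) of Theorem~1 in \cite{Tabri-MOOR-2025} (giving $z_{0,\gamma}\in\overline{\text{span}_+(B)}$ for the active set $B$, then arguing $B=\mathcal{V}$) and separately runs a Krein--Milman/Milman extreme-point case analysis, whereas you observe directly that $\alpha\overline{\text{co}}(\mathcal{V})\subset\overline{\text{span}_+(\mathcal{V})}$ by continuity of scalar multiplication. That is a genuine simplification, and it is valid.

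For Part~1, however, the specific mechanism you propose to close the attainment argument does not work. You suggest bounding $\alpha$ along minimizing sequences by testing dual feasibility $z\in\mathcal{M}^\oplus$ against the density of $P_X\otimes P_Y$. But every moment function $g_{x'},\,-g_{x'},\,g_{y'},\,-g_{y'}\in\mathcal{V}$ has $\int g\,d(P_X\otimes P_Y)=0$ by construction, and this extends by linearity and $L_1$-continuity to all of $\overline{\text{co}}(\mathcal{V})$; hence for any $z=\alpha v\in\mathcal{D}$ the test gives $\int z\,d(P_X\otimes P_Y)=\alpha\cdot 0\geq 0$, which is the tautology $0\geq 0$ and yields no bound on $\alpha$ whatsoever. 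The paper's actual mechanism is different and is carried out in Lemma~\ref{Lemma - Fenchel Dual existence of cluster point}: it first reduces to a sequence of truncated problems over $\mathcal{D}_n=\{z\in\mathcal{D}:\alpha\leq\bar\alpha_n\}$ via the Mena--Lerma outer-limit framework (Lemma~\ref{Lemma - H-L}), and then rules out $\alpha_{0,n}\to\infty$ by a contradiction argument: the dual values $\int e^{z_{0,n}}\,dR_\gamma$ are uniformly bounded above (by $e^{-m(p_\gamma)}$), while the elementary Markov-type bound $\int e^{\alpha v}\,dR_\gamma\geq\alpha\,R_\gamma\bigl(v>\tfrac{\log\alpha}{\alpha}\bigr)$ would then force $R_\gamma(z'_\infty>0)=0$ for the limiting direction $z'_\infty\in\overline{\text{co}}(\mathcal{V})$, contradicting the structure of $\mathcal{V}$. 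You correctly identified attainment on the unbounded cone as the crux, and your compactness observation for the direction $v\in\overline{\text{co}}(\mathcal{V})$ is part of the real argument, but the step you offer to bound the scalar is vacuous and would need to be replaced by something like the paper's bounded-dual-value argument. Your appeal to an unspecified ``abstract $I$-projection duality theorem'' also glosses over exactly this point, since off-the-shelf duality theorems for $I$-projections (e.g.\ Csisz\'ar's) typically give value duality and the exponential form but do not automatically deliver attainment of the dual infimum on a cone like $\mathcal{D}$.
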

\begin{proof}
See Appendix~\ref{Proof- Fenchel}.
\end{proof}
\noindent Furthermore, duality holds: 
\begin{align}\label{eq - strong duality}
m(p_\gamma)=-\log\left(\int_{\Omega}e^{z_{0,\gamma}}\,dR_\gamma\right).
\end{align}

\par I will now discuss the sieve approximation of the dual problem~(\ref{eq - Fenchel Dual Problem}). Observe that the domain is infinite-dimensional because there are infinitely many inequality restrictions. Therefore, I approximate the dual problem and implement it numerically using a simulation-based procedure. The sieve approximation scheme is the one put forward by~\cite{Tabri-MOOR-2025}. It approximates the dual problem using a sequence of finite-dimensional convex stochastic programs, and I use the SAA method in Section~\ref{Section Sieve M-Estimation} to approximate these stochastic programs.

\par I begin by reparametrizing the dual problem~(\ref{eq - Fenchel Dual Problem}) using the Gelfand–Pettis (G–F) integral. Because I work in $L_1(R_\gamma)$, I provide a self-contained definition and the minimal existence statement needed for this paper. This integral applies to \emph{any} subset $J\subset L_1(R_\gamma)$ that is compact in the norm topology. Suppose $\xi$ is a positive Radon measure on the measure space $(J,\mathcal{B}(J))$, where $\mathcal{B}(J)$ is the Borel sigma-algebra of $J$, and denote the topological dual of $L_1(R_\gamma)$ by $L_1(R_\gamma)^*$.
\begin{definition}[Gelfand--Pettis integral in $L_1(R_\gamma)$]\label{Def - GF integral}
 Suppose that for every continuous linear functional $\Lambda\in L_1(R_\gamma)^*$, the scalar map
$j\mapsto \Lambda(j)$ is $\xi$-integrable. If there exists $h\in L_1(R_\gamma)$
such that
\[
\Lambda(h) \;=\; \int_J \Lambda(j)\,d\xi(j)
\quad\text{for all }\Lambda\in L_1(R_\gamma)^*,
\]
then $h$ is called the \emph{Gelfand--Pettis integral} of the identity map on
$J$ with respect to $\xi$, and we write
\[
\int_J j\,d\xi(j) \;:=\; h.
\]
\end{definition}
\noindent As $L_1(R_\gamma)^*$ separates points on $L_1(R_\gamma)$, there is at most one such $h$ that satisfies Definition~\ref{Def - GF integral}. Thus, there is no uniqueness problem. The existence of $h$ follows from an application of Theorem~3.27 of~\cite{Rudin-Book}, because (a) $J$ is compact in the $L_1(R_\gamma)$-norm, and (b) $L_1(R_\gamma)$ with its norm topology is a Fr\'{e}chet space. As $\mathcal{V}$ is compact in the norm topology of $L_1(R_\gamma)$, I can apply this definition with the choice $J=\mathcal{V}$.

\par In consequence,  the dual problem~(\ref{eq - Fenchel Dual Problem}) can be reparametrized in terms of Radon measures whose supported on $\mathcal{V}$. These measures are modeled as elements of the space $C\left(\mathcal{V}\right)^{*}$--the topological dual of the Banach space of continuous functions on $\mathcal{V}$, $C(\mathcal{V})$. The reparametrization arises from the following representation of elements in $\mathcal{D}$: 
\begin{align}\label{eq - Result of Rudin Extended}
z\in\mathcal{D}\iff \exists\mu\in\mathcal{P}\,\text{and}\,\alpha\geq0\;\text{such that}\;z=\alpha\,\int_{\mathcal{V}}v\,d\mu(v).
\end{align}
where $\mathcal{P}\subset C\left(\mathcal{V}\right)^{*}$ the set of Radon probability measures on $\mathcal{V}$, and $\int_{\mathcal{V}}v\,d\mu(v)$ the G-F integral. Now define $\Xi\subset C\left(\mathcal{V}\right)^{*}$ as the set of all positive Radon measures on $\mathcal{V}$, and consider the following set
\begin{align}\label{eq - Upsilon set}
\Upsilon\coloneqq\left\{\xi\in\Xi: \xi=\alpha\cdot\mu,\alpha\geq0\,\text{and}\,\mu\in\mathcal{P}\right\}.
\end{align}
The dual problem~(\ref{eq - Fenchel Dual Problem}) and its solution set can now be reparametrized as
\begin{align}\label{eq - Reparametrized Fenchel Dual Problem}
\vartheta^*\coloneqq\inf\left\{\int_{\Omega}e^{\int_{\mathcal{V}}v\,d\xi(v)}\,dR_\gamma:\xi\in\Upsilon\right\},
\end{align}
and $\mathcal{S}^*\coloneqq\arginf\left\{\int_{\Omega}e^{\int_{\mathcal{V}}v\,d\xi(v)}\,dR_\gamma:\xi\in\Upsilon\right\}$, respectively. I have the following result on the characterization of $\mathcal{S}^*$.
\begin{corollary}\label{Coro - dual GF}
Suppose the conditions of Proposition~\ref{Thm - Fenchel} hold. Then it follows that $$\mathcal{S}^*=\left\{\xi\in\Upsilon: z_{0,\gamma}=\int_{\mathcal{V}}v\,d\xi(v) \right\}\neq\emptyset.$$
\end{corollary}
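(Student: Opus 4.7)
The plan is to exploit the surjective parametrization $\xi \mapsto z_\xi \coloneqq \int_{\mathcal{V}} v\, d\xi(v)$ from $\Upsilon$ onto $\mathcal{D}$ provided by~(\ref{eq - Result of Rudin Extended}). Under this identification, the reparametrized problem~(\ref{eq - Reparametrized Fenchel Dual Problem}) and the original dual~(\ref{eq - Fenchel Dual Problem}) share the same infimum, and $\xi \in \mathcal{S}^*$ if and only if $z_\xi$ is an optimizer of~(\ref{eq - Fenchel Dual Problem}). Combining this with the uniqueness statement in Proposition~\ref{Thm - Fenchel} then forces $z_\xi = z_{0,\gamma}$, which is exactly the description of $\mathcal{S}^*$ in the corollary.

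For non-emptyness, I would first invoke Proposition~\ref{Thm - Fenchel} to produce $z_{0,\gamma} \in \mathcal{D}$ achieving the infimum in~(\ref{eq - Fenchel Dual Problem}). By the forward direction of~(\ref{eq - Result of Rudin Extended}), there exist $\alpha \geq 0$ and $\mu \in \mathcal{P}$ with $z_{0,\gamma} = \alpha \int_{\mathcal{V}} v\, d\mu(v)$. Setting $\xi_0 \coloneqq \alpha \cdot \mu \in \Upsilon$, linearity of the Gelfand--Pettis integral in the measure gives $z_{\xi_0} = z_{0,\gamma}$, so the objective in~(\ref{eq - Reparametrized Fenchel Dual Problem}) at $\xi_0$ equals $\int_{\Omega} e^{z_{0,\gamma}}\, dR_\gamma$. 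The converse direction of~(\ref{eq - Result of Rudin Extended}) shows that every $\xi \in \Upsilon$ yields $z_\xi \in \mathcal{D}$, so the infimum in~(\ref{eq - Reparametrized Fenchel Dual Problem}) cannot be smaller than that in~(\ref{eq - Fenchel Dual Problem}); equality of infima therefore holds, and $\xi_0$ attains $\vartheta^*$, giving $\xi_0 \in \mathcal{S}^*$.

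To upgrade this to the claimed set equality, I would argue that the objective in~(\ref{eq - Reparametrized Fenchel Dual Problem}) depends on $\xi$ only through $z_\xi$, so $\xi \in \mathcal{S}^*$ iff $z_\xi \in \arginf\{\int_\Omega e^z\, dR_\gamma : z \in \mathcal{D}\}$. Since Proposition~\ref{Thm - Fenchel} asserts that this $\arginf$ is the singleton $\{z_{0,\gamma}\}$ (uniqueness being inherited from strict convexity of $z \mapsto \int_{\Omega} e^z\, dR_\gamma$ on the convex domain $\mathcal{D}$, together with uniqueness of $P_\gamma$ which pins the representation via~(\ref{eq - Representation})), this is precisely the condition $z_\xi = z_{0,\gamma}$.

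The main obstacle is verifying the uniqueness of $z_{0,\gamma}$ cleanly enough to conclude set equality rather than mere containment: one must rule out two distinct elements $z_1, z_2 \in \mathcal{D}$ achieving the infimum, which follows from strict convexity of the exponential functional together with convexity of $\mathcal{D}$ (if $(z_1+z_2)/2 \in \mathcal{D}$, strict convexity forces $z_1 = z_2$ in $L_1(R_\gamma)$). Everything else is a bookkeeping exercise transferring between the two parametrizations, and the well-definedness of $z_\xi$ as an element of $L_1(R_\gamma)$ was already handled in the discussion following Definition~\ref{Def - GF integral}.
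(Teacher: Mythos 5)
Your proposal is correct and takes essentially the same approach as the paper: both hinge on the Gelfand--Pettis representation~(\ref{eq - Result of Rudin Extended}) establishing the correspondence between $\Upsilon$ and $\mathcal{D}$, together with uniqueness (up to $R_\gamma$-equivalence) of $z_{0,\gamma}$ from Proposition~\ref{Thm - Fenchel}. The paper organizes the argument as an explicit double inclusion with inequality chains, whereas you organize it around the factorization of the objective through the map $\xi\mapsto z_\xi$, but the underlying logic is identical.
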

\begin{proof}
See Appendix~\ref{Proof- Fenchel G-F}.
\end{proof}
\noindent The G-F integral representation of the solution $z_{0,\gamma}$ in~(\ref{eq - Fenchel Dual Problem}) is $z_{0,\gamma}=\alpha_{0,\gamma}\,\int_{\mathcal{V}}v\,d\mu_{0,\gamma}(v)$. Since $z_{0,\gamma}$ is unique (up to equivalence class), it follows that $\alpha_{0,\gamma}$ is also unique, but that $\mu_{0,\gamma}$ may not be uniquely defined. This representation of $z_{0,\gamma}$ shows $\alpha_{0,\gamma}$ is also the total variation norm of the Radon measure $\xi_{0,\gamma}\coloneqq\alpha_{0,\gamma}\cdot\mu_{0,\gamma}$.

\par Using Assumption~\ref{Assump -Primities OTP}, I can also obtain a representation of $z_{0,\gamma}$ in terms of Schr\"odinger potentials (\citealp{nutz2021entropic}), enabling me to deduce an upper bound on $\alpha_{0,\gamma}/\gamma$ via an application of Lemma~4.9 in~\cite{nutz2022eot}.  The next result presents this bound.
\begin{proposition}\label{prop - alpha UB}
Suppose Assumption~\ref{Assump -Primities OTP} holds, and let $\|c\|_\infty=\sup_{(x,y)\in\mathcal{X}\times\mathcal{Y}}c(x,y)$. Then, $\alpha_{0,\gamma}/\gamma\leq \kappa \|c\|_\infty<\infty$, holds, where
$\kappa\coloneqq\frac{1}{\max\left\{1-F_X(\inf\mathcal{X}),1-F_Y(\inf\mathcal{Y})\right\}}$.
\end{proposition}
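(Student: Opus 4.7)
The strategy is to link $\alpha_{0,\gamma}$ to the Schr\"odinger potentials of the entropic optimal transport plan, apply Nutz's sup-norm bound on those potentials, and transfer the result to a total-variation bound via the Gelfand--Pettis structure of $\mathcal{V}$. First, I would combine the density identity $p_\gamma = e^{z_{0,\gamma}}/\int e^{z_{0,\gamma}}\,dR_\gamma$ from Proposition~\ref{Thm - Fenchel} with $dR_\gamma/d(P_X\otimes P_Y)\propto e^{-\gamma c}$ to obtain $dP_\gamma/d(P_X\otimes P_Y)\propto \exp(z_{0,\gamma}-\gamma c)$. The classical Schr\"odinger factorization of the entropic plan (e.g., Nutz, 2021), valid under Assumption~\ref{Assump -Primities OTP}, expresses the same density as $\exp(\gamma\phi(x)+\gamma\psi(y)-\gamma c(x,y))$ for measurable potentials $(\phi,\psi)$. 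Matching yields $z_{0,\gamma}(x,y)=\gamma\phi(x)+\gamma\psi(y)+C$ for an additive constant $C$ that may be absorbed into either potential. Lemma~4.9 of Nutz~(2022) then admits a normalization of $(\phi,\psi)$ with $L^\infty$-norms controlled by $\|c\|_\infty$ together with the marginal-mass factor $\kappa$, which arises because the proof of that lemma exploits the probability mass assigned to each marginal's support beyond its componentwise infimum---precisely the source of $\kappa$ in the conclusion.

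With $(\phi,\psi)$ so normalized, I would next construct an explicit $\xi\in\Upsilon$ satisfying $\int_\mathcal{V} v\,d\xi(v) = z_{0,\gamma}$ by decomposing $\gamma\phi$ and $\gamma\psi$ via their Jordan decompositions and writing each monotone piece as a positive Riesz--Stieltjes-type integral against $\pm g_{x'}$ or $\pm g_{y'}$. Because the Jordan pieces are nonnegative, the resulting $\xi$ is a positive Radon measure on $\mathcal{V}$, and its total mass $\xi(\mathcal{V})$ is bounded by $\gamma$ times the normalized sup-norms from the previous step, giving $\xi(\mathcal{V})\leq \gamma\kappa\|c\|_\infty$. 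Since $\alpha_{0,\gamma}$ equals the total variation of any valid G--F representation of $z_{0,\gamma}$ in $\mathcal{S}^*$ (via the uniqueness discussion following Corollary~\ref{Coro - dual GF}), the bound $\alpha_{0,\gamma}/\gamma\leq \kappa\|c\|_\infty$ follows.

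The hard part will be the last construction, namely converting pointwise control of $\phi$ and $\psi$ into a matching total-variation bound on the positive Radon measure representing $z_{0,\gamma}$ on $\mathcal{V}$. The CDF offsets $F_X(x')$ and $F_Y(y')$ built into the moment functions must be tracked carefully so that the final constant equals exactly $\kappa$ rather than a larger one, and so that the additive constant $C$ from the Schr\"odinger identification is properly absorbed without inflating the total variation. The first two steps reduce to density-matching bookkeeping and a direct invocation of Nutz's lemma, respectively, so essentially all the technical effort lies in this final construction.
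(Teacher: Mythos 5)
Your first two steps track the paper's proof: both match the density $p_\gamma\propto e^{z_{0,\gamma}}$ against the Schr\"odinger factorization $\exp\{\gamma(\varphi_\gamma(x)+\psi_\gamma(y)-c(x,y))\}$ to get $z_{0,\gamma}/\gamma=\varphi_\gamma+\psi_\gamma$, and both invoke Lemma~4.9 of Nutz (2022). However, you misattribute the source of $\kappa$. Nutz's Lemma~4.9 delivers only the bound $\varphi_\gamma(x)\leq\|c\|_\infty$ on the potential itself, with no $\kappa$ factor; it has nothing to do with ``the probability mass assigned to each marginal's support beyond its componentwise infimum.'' In the paper, $\kappa$ enters afterwards, through the structure of the moment class $\mathcal{V}$: writing $\varphi_\gamma=(\alpha_{0,\gamma}/\gamma)\varphi'_\gamma$ with $\varphi'_\gamma$ coming from $\int_{\mathcal{V}}v\,d\mu_{0,\gamma}\in\overline{\text{co}}(\mathcal{V})\subset[-1,1]$, one gets $(\alpha_{0,\gamma}/\gamma)\leq\|c\|_\infty/\sup\{\varphi'_\gamma(x):\varphi'_\gamma(x)>0\}$, and the supremum in the denominator is bounded below by $1-F_X(\inf\mathcal{X})$ because $\varphi'_\gamma$ is a positive sub-unit combination of the functions $\pm\bigl(F_X(x')-1[x\preceq x']\bigr)$. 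That step is where $\kappa$ lives, and your plan never supplies an analogue of it.

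Your third step also diverges from (and is weaker than) the paper. The paper never constructs a new $\xi$; it bounds $\alpha_{0,\gamma}$ directly inside the existing decomposition $z_{0,\gamma}=\alpha_{0,\gamma}\int_{\mathcal{V}}v\,d\mu_{0,\gamma}(v)$. You instead propose to build a fresh positive Radon measure by Jordan-decomposing $\gamma\varphi_\gamma$ and $\gamma\psi_\gamma$ and expressing each monotone piece as a Riesz--Stieltjes integral against $\pm g_{x'}$ or $\pm g_{y'}$. This faces two obstacles you do not address: under Assumption~\ref{Assump -Primities OTP} the Schr\"odinger potentials are bounded and measurable but need not have bounded variation, so a Jordan decomposition is not available in general; and even if it were, the representation $\int g_{x'}(x)\,d\nu(x')=\int F_X\,d\nu-\nu(\{x':x\preceq x'\})$ couples the constant term to the survival function of $\nu$ in a way that makes hitting an arbitrary monotone $\varphi_\gamma^{\pm}$ with a positive $\nu$ a nontrivial consistency problem, not mere bookkeeping. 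You flag this as ``the hard part'' and leave it unresolved, but since the $\kappa$ constant was also supposed to emerge there (having been incorrectly credited to Nutz's lemma), this is precisely where the proof would break. The fix is to drop the construction entirely and instead bound $\alpha_{0,\gamma}$ pointwise via the $[-1,1]$ normalization of $\overline{\text{co}}(\mathcal{V})$ and the lower bound on the positive excursion of $\varphi'_\gamma$, as the paper does.
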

\begin{proof}
See Appendix~\ref{Proof- Prop prop - alpha UB}.
\end{proof}

\par The sieve I consider is based on the approximation scheme put forward by~\cite{Tabri-MOOR-2025}. His scheme is indexed by a sequence $\{\epsilon_\ell\}_{\ell\geq1}\subset\mathbb{R}_{++}$ with $\epsilon_\ell\downarrow0$ as $\ell\rightarrow\infty$. For each $\ell\in\mathbb{Z}_+$, let $U_\ell\coloneqq\left\{z\in L_1(R_\gamma): \|z\|_{L_1(R_\gamma)}\leq\epsilon_\ell\right\}.$ Then by Lemma~A.2 of~\cite{Tabri-MOOR-2025}, there corresponds a finite partition $\{E_{i,\ell}\}_{i=1}^{n_\ell}$ of $\mathcal{V}$ with the property
\begin{align}\label{eq - Disc Accuracy}
\int_{\mathcal{V}}v\,d\mu(v)-\sum_{i=1}^{n_\ell}\mu(E_{i,\ell})v_i\in U_\ell\quad \forall v_i\in E_{i,\ell},\;i=1\,\ldots,n_\ell.
\end{align}
Remarkably, the only structure on the partitions that is required for this approximation to hold is that for each $i$: $v-v^\prime\in U_\ell$ for all $v,v^\prime\in E_{i,\ell}$. Consequently, the partitions depend only on $\epsilon_\ell$, $R_\gamma$ and $\mathcal{V}$, and not $\mu$. Of course, to satisfy the accuracy~(\ref{eq - Disc Accuracy}) the dimension of the discretization, $n_\ell$, will diverge to $\infty$ as $\ell\rightarrow\infty$. 

\par There are many partitions of $\mathcal{V}$ that can be used in practice. By the proof of Lemma~\ref{Lemma - Precomp}, the set $\mathcal{V}$ is VC subgraph; hence, there exists a finite partition of $\mathcal{V}$ satisfying the accuracy~(\ref{eq - Disc Accuracy}) that does \emph{not} depend on $R_\gamma$. Partitions of this sort are advantageous in practice since $n_\ell$ would then be independent of the regularizer $\gamma$. In particular, standard entropy bounds for uniformly bounded 
VC-subgraph classes (e.g., Theorem 2.6.7 in~\citealp{VDV-W}) imply that, for each approximation level $\epsilon_\ell$, 
one can choose a partition such that $n_\ell$ and the accuracy~(\ref{eq - Disc Accuracy}) satisfy $n_\ell \;\le\; K\,\epsilon_\ell^{-\delta}$,
for constants $K,\delta>0$ depending only on the VC characteristics of $\mathcal{V}$ and the uniform envelope (here, it is the constant function equal to 1), but not on the reference measure $R_\gamma$. Consequently, $n_\ell$ can always be taken to grow at most polynomially in $\epsilon_\ell^{-1}$, uniformly over the choice of the regularization parameter 
$\gamma$. Appendix~\ref{Section - Partition of V} presents an example of such a partition.  

\par The sieve approximation for the dual problem proceeds as follows. 
Fix a level $\ell\in\mathbb{Z}_{+}$ and choose an approximation tolerance 
$\epsilon_\ell>0$. Let $n_\ell\in\mathbb{Z}_{+}$ and 
$\{E_{i,\ell}\}_{i=1}^{n_\ell}$ be a partition of $\mathcal{V}$ 
satisfying the accuracy requirement~\eqref{eq - Disc Accuracy}. 
For each $i=1,\ldots,n_\ell$, select a representative $v_i\in E_{i,\ell}$ and 
let
\begin{align}\label{eq - moment function SAA}
G_\ell(\omega;\alpha,\mu)
:=\exp\!\left\{\alpha\sum_{i=1}^{n_\ell}\mu_i\,v_i(\omega)\right\},
\qquad
\omega\in\Omega,\; (\alpha,\mu)\in\mathcal{C}_\ell,
\end{align}
where $\mathcal{C}_\ell \coloneqq\left\{(\alpha,\mu)\in\mathbb{R}_+^{n_\ell+1}:\sum_{i=1}^{n_\ell}\mu_i=1\right\}$.
The finite-dimensional convex program associated with level~$\ell$ is
\begin{align}
\vartheta_\ell^*
&:=\inf_{(\alpha,\mu)\in\mathcal{C}_\ell}
\int_{\Omega} G_\ell(\omega;\alpha,\mu)\,dR_\gamma(\omega), 
\label{eq - finite program}
\\
\mathcal{S}_\ell^*
&:=\arginf_{(\alpha,\mu)\in\mathcal{C}_\ell}
\int_{\Omega} G_\ell(\omega;\alpha,\mu)\,dR_\gamma(\omega).
\label{eq:finite-program-solutions}
\end{align}

\noindent I can also apply Theorem~3 of~\cite{Tabri-MOOR-2025} to establish convergence of the approximation scheme to the original problem~(\ref{eq - Reparametrized Fenchel Dual Problem}) with the sequence of approximate solutions converging to a solution of the original problem, as $\ell\rightarrow \infty$. The next result formalizes this point. 
\begin{proposition}\label{Thm - Computation}
Suppose Assumption~\ref{Assump -Primities OTP} holds. Let $\bar{c}=\int_\Omega c(\omega)\,\,d(P_X\otimes P_Y)$ and let $\{\epsilon_\ell,U_\ell\}_{\ell\geq1}$ be described as above. For each $\ell$, let $\{E_{i,\ell}\}_{i=1}^{n_\ell}$ be a partition of $\mathcal{V}$ such that, for each $i$, $v-v^\prime\in U_\ell$ holds for all $v,v^\prime\in E_{i,\ell}$. Then, for each $\ell$,
$\mathcal{S}^*_\ell\neq\emptyset$
for any $v_i\in E_{i,\ell}$ where $i=1,\ldots,n_\ell$. Furthermore, for each $\ell$ and $v_i\in E_{i,\ell}$ with $i=1,\ldots,n_\ell$, define the corresponding Radon measure $\xi_\ell=\alpha_{n_\ell}\sum_{i=1}^{n_\ell}\mu_{i,n_\ell} \delta_{v_i}$, where $(\alpha_{n_\ell},\mu_{1,n_\ell},\ldots,\mu_{n_\ell,n_\ell})\in\mathcal{S}^*_\ell$ and $\delta_{v_i}$ is the Dirac delta function at $v_i$ for each $i$. Then the following statements hold.
\begin{enumerate}
\item $\lim_{\ell\rightarrow\infty}\vartheta^*_\ell=\vartheta^*$,
\item Convergence rate of $\{\log(\vartheta^*_\ell)/\gamma\}_{\ell\geq 1}$: $\gamma^{-1}\left|\log\vartheta^*_\ell-\log\vartheta^*\right|\leq \epsilon_\ell\,\kappa\|c\|_{\infty}e^{\gamma(\kappa\|c\|_{\infty}+\bar{c})+\log a_\gamma}$ for all $\ell\in\mathbb{Z}_+$. 
\item Every accumulation point of $\left\{\xi_\ell\right\}_{\ell\geq1}$, in the weak-star topology of $C\left(\mathcal{V}\right)^{*}$, is an element of $\mathcal{S}^*$.
\end{enumerate}
\end{proposition}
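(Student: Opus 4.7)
The plan is to specialize Theorem~3 of~\cite{Tabri-MOOR-2025} to the present setup and then refine its qualitative output into the explicit rate claimed in part~2. I would first verify the hypotheses of that theorem: the objective $\xi\mapsto\int_\Omega e^{\int_\mathcal{V} v\,d\xi(v)}\,dR_\gamma$ is convex and weak-star lower semicontinuous on $\Upsilon\subset C(\mathcal{V})^*$; the partitions $\{E_{i,\ell}\}_{i=1}^{n_\ell}$ satisfy~\eqref{eq - Disc Accuracy} by construction; and Proposition~\ref{prop - alpha UB}, together with the $\pm 1$ envelope on $\mathcal{V}$, supplies the coercivity and the $L_\infty$ control on admissible exponents. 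Together these would deliver nonemptiness of $\mathcal{S}^*_\ell$ and parts~1 and~3 as qualitative consequences, leaving the quantitative rate and the weak-star compactness argument as the substantive work.

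For part~2 the plan is to sandwich $\vartheta^*_\ell$. Because each $(\alpha,\mu)\in\mathcal{C}_\ell$ induces $\alpha\sum_i\mu_i\delta_{v_i}\in\Upsilon$, the finite program is a restriction of~\eqref{eq - Reparametrized Fenchel Dual Problem}, so $\vartheta^*_\ell\geq\vartheta^*$. For the upper bound I would insert the trial point $\tilde\alpha=\alpha_{0,\gamma}$ and $\tilde\mu_i=\mu_{0,\gamma}(E_{i,\ell})$ drawn from a solution $(\alpha_{0,\gamma},\mu_{0,\gamma})$ of the reparametrized dual guaranteed by Corollary~\ref{Coro - dual GF}. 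By~\eqref{eq - Disc Accuracy}, the induced exponent $\tilde z_\ell\coloneqq\tilde\alpha\sum_i\tilde\mu_i v_i$ satisfies $\|\tilde z_\ell - z_{0,\gamma}\|_{L_1(R_\gamma)}\leq\alpha_{0,\gamma}\epsilon_\ell$, while the pointwise $\pm 1$ bound on $\mathcal{V}$ combined with Proposition~\ref{prop - alpha UB} forces $|\tilde z_\ell|$ and $|z_{0,\gamma}|$ to be dominated pointwise by $\alpha_{0,\gamma}\leq\gamma\kappa\|c\|_\infty$. The elementary bound $|e^a-e^b|\leq e^{\max(a,b)}|a-b|$ would then give $\vartheta^*_\ell-\vartheta^*\leq\gamma\kappa\|c\|_\infty\epsilon_\ell\,e^{\gamma\kappa\|c\|_\infty}$. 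Passing to logarithms via the mean value theorem introduces a factor $1/\vartheta^*$, which I would bound using feasibility of the product coupling $P_X\otimes P_Y$ in the primal EOT problem: combining this with~\eqref{eq - OT Obj rewrite} and strong duality~\eqref{eq - strong duality} yields $-\log\vartheta^*\leq\gamma\bar c-\log a_\gamma$. Dividing by $\gamma$ then recovers the stated rate.

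For part~3 I would apply Banach-Alaoglu in $C(\mathcal{V})^*$. Since $\mathcal{V}$ is norm-compact in $L_1(R_\gamma)$, it is metrizable and separable, so $C(\mathcal{V})$ is separable and norm-bounded subsets of its dual are weak-star sequentially compact. The total variations $\|\xi_\ell\|_{TV}=\alpha_{n_\ell}$ stay uniformly bounded (otherwise $\vartheta^*_\ell$ would diverge, contradicting part~1), so every subsequence of $\{\xi_\ell\}$ has a further weak-star convergent subsequence with limit $\xi^\star\in\Upsilon$, and weak-star lower semicontinuity of the objective together with $\vartheta^*_\ell\to\vartheta^*$ would then force $\xi^\star\in\mathcal{S}^*$.

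The principal hurdle is the rate derivation: it requires simultaneously exploiting the $L_1(R_\gamma)$ approximation~\eqref{eq - Disc Accuracy} and the uniform $L_\infty$ envelope on the exponents, and carefully tracking how the constants propagate through the exponential and the mean value theorem. The remaining parts are essentially direct specializations of~\cite{Tabri-MOOR-2025} to the current framework.
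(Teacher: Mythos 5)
Your overall strategy for Parts~1 and~2 matches the paper's: sandwich $\vartheta^*\le\vartheta^*_\ell\le\int_\Omega e^{\tilde z_\ell}\,dR_\gamma$ with the trial point $\tilde z_\ell=\alpha_{0,\gamma}\sum_i\mu_{0,\gamma}(E_{i,\ell})v_i$, apply the mean value theorem to the exponential together with the $L_1(R_\gamma)$ accuracy~\eqref{eq - Disc Accuracy} and Proposition~\ref{prop - alpha UB}, and then pass to logarithms by bounding $1/\vartheta^*$. For Part~3 your route (metrizability of the weak-star topology on bounded sets of $C(\mathcal{V})^*$, Banach--Alaoglu, and weak-star lower semicontinuity of the objective obtained by composing the weak-star-to-weak continuity of $\xi\mapsto\int_\mathcal{V} v\,d\xi$ with weak lower semicontinuity of $z\mapsto\int_\Omega e^z\,dR_\gamma$) is genuinely different from the paper's, which instead proves the lower semicontinuity ``from scratch'' by a Skorohod representation followed by Fatou's lemma. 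Both are valid; the paper's is more elementary and self-contained, while yours is shorter if the convex-integral-functional lsc fact is taken as known. Note also that the conclusion of Part~3 only asserts that accumulation points (if any) lie in $\mathcal{S}^*$, so your detour through uniform total-variation boundedness is not needed for the stated claim (and as written, ``otherwise $\vartheta^*_\ell$ would diverge'' would require an argument along the lines of Lemma~\ref{Lemma - Fenchel Dual existence of cluster point}, since $\vartheta^*_\ell$ is an infimum).

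There is, however, a concrete gap in your Part~2: the sign of $\log a_\gamma$. You claim $-\log\vartheta^*\le\gamma\bar c-\log a_\gamma$, which would give the factor $e^{\gamma(\kappa\|c\|_\infty+\bar c)-\log a_\gamma}$ and therefore \emph{not} the rate asserted in the proposition. The correct bound, established in Lemma~\ref{Lemma - LB on Dual value}, is $\vartheta^*>e^{-(\gamma\bar c+\log a_\gamma)}$, equivalently $-\log\vartheta^*<\gamma\bar c+\log a_\gamma$; this follows from the direct computation
\[
H(P_X\otimes P_Y\mid R_\gamma)
=\int_\Omega\log\!\Bigl[\tfrac{dR_\gamma}{d(P_X\otimes P_Y)}\Bigr]^{-1}d(P_X\otimes P_Y)
=\int_\Omega\bigl(\log a_\gamma+\gamma c\bigr)\,d(P_X\otimes P_Y)
=\gamma\bar c+\log a_\gamma,
\]
combined with $H(P_\gamma\mid R_\gamma)\le H(P_X\otimes P_Y\mid R_\gamma)$ and strong duality $-\log\vartheta^*=H(P_\gamma\mid R_\gamma)$. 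Since $\log a_\gamma\le 0$, your sign yields a strictly weaker lower bound on $\vartheta^*$ and hence a strictly larger constant in the rate than the one stated. It appears you obtained the wrong sign by tracing through the paper's display~\eqref{eq - OT Obj rewrite}, whose $\log a_\gamma$ term itself carries a sign error; the safe path is the direct relative-entropy computation above (as in Lemma~\ref{Lemma - LB on Dual value}), which produces exactly the constant $e^{\gamma(\kappa\|c\|_\infty+\bar c)+\log a_\gamma}$ claimed in the proposition.
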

\begin{proof}
See Appendix~\ref{Proof Prop computation}.
\end{proof}

\par This section achieved two complementary objectives. First, it reformulated the
entropically regularized optimal transport problem as an  $I$-projection
under an infinite collection of moment inequality restrictions and characterized
its solution through a Fenchel dual representation involving a
G-F integral. Second, it introduced a deterministic sieve
approximation of this dual problem, replacing the infinite-dimensional
optimization with a sequence of finite-dimensional convex programs whose
solutions converge to those of the original dual problem as the sieve resolution
increases. These approximating programs provide a numerically tractable
representation of the dual problem, but they still involve expectations with
respect to the reference measure and are therefore not directly observable.

The following section builds on this deterministic approximation by introducing a
sieve M-estimation framework. Specifically, the finite-dimensional convex
programs constructed above are estimated using the SAA
method, yielding estimators of both the dual optimal value and the
corresponding dual variables. This two-step structure--deterministic sieve
approximation followed by stochastic estimation--forms the basis for the
consistency, rate, and inference results established in the remainder of the
paper.

\section{Sieve M-Estimation}\label{Section Sieve M-Estimation}
\par As the approximating problem~(\ref{eq - finite program}) is a stochastic program, I describe a procedure for computing $\vartheta^*_\ell$ and elements of $\mathcal{S}^*_\ell$ using the SAA method. I also show the procedure converges to the Fenchel dual problem~(\ref{eq - Reparametrized Fenchel Dual Problem}). Firstly, replace the constraint set $\mathcal{C}_\ell$ with $\overline{\mathcal{C}}_\ell\coloneqq\left\{(\alpha,\mu)\in\mathcal{C}_\ell:\alpha\leq\gamma\kappa\|c\|_\infty\right\}$, where the upper bound is due to Proposition~\ref{prop - alpha UB}. Consequently, for each $\ell$, the collection of functions
\begin{align}\label{eq - Collection VC}
\mathcal{G}_{\ell}\coloneqq\left\{G_\ell(\cdot,\alpha,\mu):(\alpha,\mu)\in\overline{\mathcal{C}}_\ell\right\}
\end{align}
is a uniformly bounded class of functions, with common bound $e^{\gamma\kappa\|c\|_\infty}$. This property of $\mathcal{G}_{\ell}$ is useful for the ensuing analysis, as the level of the discretization's accuracy, $\epsilon_\ell$, must be coupled with the sample size through the complexity of this class of functions. 

\par Next, replace the finite program~(\ref{eq - finite program}) and its solution set $\mathcal{S}^*_\ell$ with
\begin{align}
\vartheta_\ell\coloneqq\inf\left\{\int_{\Omega}G_\ell(\omega;\alpha,\mu)\,dR_\gamma(\omega): (\alpha,\mu)\in\overline{\mathcal{C}}_\ell\right\}\quad\text{and}\label{eq - finite program 1}\\
S_\ell\coloneqq\argmin\left\{\int_{\Omega}G_\ell(\omega;\alpha,\mu)\,dR_\gamma(\omega): (\alpha,\mu)\in\overline{\mathcal{C}}_\ell\right\},\nonumber
\end{align}
respectively. Now solving this optimization problem using the SAA method entails solving the sample-analogue of~(\ref{eq - finite program 1}) with a simulated random sample from $R_\gamma$. Let $\omega^{N_\ell}\coloneqq\{\omega_j,i\leq N_\ell\}$ be a random sample of size $N_\ell$ from $R_\gamma$. The SAA method solves
 \begin{align}\label{eq - SAA 0}
\hat{\vartheta}_\ell\left(\omega^{N_\ell}\right)\coloneqq\inf\left\{\frac{1}{N_\ell}\sum_{j=1}^{N_\ell}G_\ell(\omega_j,\alpha,\mu):(\alpha,\mu)\in\overline{\mathcal{C}}_\ell\right\}
\end{align}
and approximates $S_\ell$ with $\hat{S}_\ell\left(\omega^{N_\ell}\right)\coloneqq\argmin\left\{\frac{1}{N_\ell}\sum_{j=1}^{N_\ell}G_\ell(\omega_j,\alpha,\mu): (\alpha,\mu)\in\overline{\mathcal{C}}_\ell\right\}$. 

\begin{remark}[Measurability]\label{Remark - Measurability SAA}
For each $\ell\in\mathbb{Z}_+$, the SAA objective function in~(\ref{eq - finite program 1}) is defined on a common probability space $\left(\times_{i=1}^{N_\ell}\Omega, \times_{i=1}^{N_\ell}\mathcal{B}(\Omega), R^{\otimes_{N_\ell}}_\gamma\right)$, where $\mathcal{B}(\Omega)$ is the Borel sigma-algebra of $\Omega$, and $R^{\otimes_{N_\ell}}_\gamma)$ is the $N_\ell$-fold product of the probability measure $R_\gamma$. As the function $G_\ell(\omega;\alpha,\mu)$, defined in~(\ref{eq - moment function SAA}), is a \emph{Carath\'{e}odory function}, i.e., continuous in $(\alpha,\mu)$ and measurable in $\omega$, the SAA objective function is also such a function. In consequence, $\hat{\vartheta}_\ell\left(\omega^N\right)$ and $\hat{S}_\ell\left(\omega^N\right)$ are measurable. Furthermore, a particular optimal solution $(\alpha_\ell,\mu_\ell)$ of the SAA problem is a measurable selection $\left(\alpha_\ell\left(\omega^{N_\ell}\right),\mu_\ell\left(\omega^{N_\ell}\right)\right)\in\hat{S}_{\ell}\left(\omega^{N_\ell}\right)$, and existence of such measurable selection is ensured by the Measurable Selection Theory (e.g., Theorem~7.34,~\citealp{Shapiro-Dentcheva-Ruszczynski}). This takes care of measurability questions.
\end{remark}

\par In my setup, the sequence of SAA programs~(\ref{eq - SAA 0}) comprise a nonparametric sieve estimation problem. A sufficient condition for its consistency, as the sample size $N_\ell$ increases with $\ell$, for fixed $\gamma$, is that
\begin{align}\label{eq - entropy bound}
\lim_{\ell\rightarrow\infty}N^{-1}_\ell\log n_\ell=0.
\end{align}
This condition implies the Rademacher complexity, $\mathcal{R}_{N_\ell}\left(\mathcal{G}_\ell\right)$, described in Lemma~\ref{Lemma - Rademacher complexity G} and Corollary~\ref{Corollary - Rademacher complexity}, satisfies $\mathcal{R}_{N_\ell}\left(\mathcal{G}_\ell\right)=o(1)$ as $\ell\rightarrow\infty$. Consequently, I can establish that a uniform strong law of large numbers holds for the sequence of function classes $\{\mathcal{G}_\ell\}_{\ell\geq1}$--see Proposition~\ref{Prop - ULLN} in Appendix~\ref{Appendix - ULLN}. This large-sample result is key in the proof of my main result.
\begin{theorem}\label{Thm Stochastic Opt Sieve}
Suppose that Assumption~\ref{Assump -Primities OTP} holds, let $\mathcal{G}_{\ell}$ be as given in~(\ref{eq - Collection VC}) for each $\ell\in\mathbb{Z}_+$, and suppose that the limit~(\ref{eq - entropy bound}) holds. For each $\ell$, let $\omega^{N_\ell}$ be a random sample from $R_\gamma$, and $\hat{\xi}_\ell\left(\omega^{N_\ell}\right)=\hat{\alpha}_\ell\left(\omega^{N_\ell}\right)\sum_{i=1}^{n_\ell}\hat{\mu}_{i,\ell}\left(\omega^{N_\ell}\right) \delta_{v_i}$ where $\left(\hat{\alpha}_\ell\left(\omega^{N_\ell}\right),\hat{\mu}_\ell\left(\omega^{N_\ell}\right)\right)\in\hat{S}_\ell\left(\omega^{N_\ell}\right)$. Then the following statements hold. 
\begin{enumerate}
\item With probability 1,
\begin{enumerate}[(i)]
\item $\lim_{\ell\rightarrow\infty}\hat{\vartheta}_\ell=\vartheta^*$.
\item Every accumulation point of $\{\hat{\xi}_\ell\}_{\ell\geq1}$, in the weak-star topology of $C\left(\overline{\mathcal{V}}\right)^{*}$, is an element of $\mathcal{S}^*$. 
\end{enumerate}
\item Sample complexity: for each $\ell$, $$\gamma^{-1}E_{R^{\otimes_{N_\ell}}_\gamma}\left[\left|\log\hat{\vartheta}_\ell -\log\vartheta^*\right|\right]\leq\max\left\{\sqrt{\frac{2\log n_\ell}{N_\ell}},\epsilon_\ell\,\right\}2\kappa\|c\|_{\infty}e^{\gamma2\kappa\|c\|_{\infty}}.$$

\item As $\ell\to\infty$,
\begin{align*}
\vartheta^*
&\;\ge\;
\hat{\vartheta}_\ell
- N_\ell^{-1/2}\sup_{g\in\mathcal{G}_\ell} B^{l}_{\ell}(g)
- \epsilon_\ell\, \gamma\kappa \|c\|_{\infty}e^{\gamma\kappa \|c\|_{\infty}}
+ o_{R_\gamma}\!\bigl(N_\ell^{-1/2}\bigr),\\
\vartheta^*
&\;\le\;
\hat{\vartheta}_\ell
+ N_\ell^{-1/2}\sup_{g\in\mathcal{G}^{-}_\ell} B^{u}_{\ell}(g)
+ o_{R_\gamma}\!\bigl(N_\ell^{-1/2}\bigr),
\end{align*}
where $B^{l}_{\ell}$ and $B^{u}_{\ell}$ are centered Gaussian processes
indexed by $\mathcal{G}_\ell$ and $\mathcal{G}_\ell^{-}=-\mathcal{G}_\ell$
with common covariance kernel
$\int_{\Omega} g(\omega)g'(\omega)\, dR_\gamma(\omega)$.
\end{enumerate}
\end{theorem}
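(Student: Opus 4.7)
The plan is to combine the deterministic sieve-approximation of Proposition~\ref{Thm - Computation} with stochastic control of the empirical process indexed by $\mathcal{G}_\ell$. Two preliminary observations support all three parts. First, because $|v|\leq 1$ on $\mathcal{V}$, $\mu$ lies in the simplex, and $\alpha\leq\gamma\kappa\|c\|_\infty$ on $\overline{\mathcal{C}}_\ell$, each element of $\mathcal{G}_\ell$ satisfies $e^{-\gamma\kappa\|c\|_\infty}\leq G_\ell\leq e^{\gamma\kappa\|c\|_\infty}$, so $\log$ is Lipschitz on that range with constant $e^{\gamma\kappa\|c\|_\infty}$. Second, by Proposition~\ref{prop - alpha UB} the restriction $\alpha\leq\gamma\kappa\|c\|_\infty$ does not exclude any optimal dual solution, so the conclusions of Proposition~\ref{Thm - Computation} carry over to $\overline{\mathcal{C}}_\ell$ and yield $\vartheta^*\leq\vartheta_\ell\leq\vartheta^*+\epsilon_\ell\gamma\kappa\|c\|_\infty e^{\gamma\kappa\|c\|_\infty}$. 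For \emph{Part~1}, I would apply the ULLN of Proposition~\ref{Prop - ULLN} in Appendix~\ref{Appendix - ULLN}, which under $N_\ell^{-1}\log n_\ell\to 0$ delivers $\sup_{g\in\mathcal{G}_\ell}|N_\ell^{-1}\sum_j g(\omega_j)-\int g\,dR_\gamma|\to 0$ almost surely; combining this with $|\hat\vartheta_\ell-\vartheta_\ell|\leq\sup_g|N_\ell^{-1}\sum_j g(\omega_j)-\int g\,dR_\gamma|$ and $\vartheta_\ell\to\vartheta^*$ from Proposition~\ref{Thm - Computation}(1) gives statement~(i). For~(ii), the total-variation bound $\hat\alpha_\ell\leq\gamma\kappa\|c\|_\infty$ places $\{\hat\xi_\ell\}$ in a weak-star compact ball of $C(\overline{\mathcal{V}})^*$ by Banach--Alaoglu, and along any weak-star convergent subsequence $\hat\xi_{\ell_k}\rightharpoonup\xi_\infty$ continuity of the Gelfand--Pettis integral and dominated convergence yield $\hat\vartheta_{\ell_k}\to\int_{\Omega} e^{\int v\,d\xi_\infty}\,dR_\gamma$; matching against $\vartheta^*$ from~(i) forces $\xi_\infty\in\mathcal{S}^*$.

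For \emph{Part~2}, decompose $|\log\hat\vartheta_\ell-\log\vartheta^*|\leq|\log\hat\vartheta_\ell-\log\vartheta_\ell|+|\log\vartheta_\ell-\log\vartheta^*|$; the second summand is controlled by Proposition~\ref{Thm - Computation}(2), and the Lipschitz-$\log$ bound reduces the first to $e^{\gamma\kappa\|c\|_\infty}E\sup_{g\in\mathcal{G}_\ell}|N_\ell^{-1}\sum_j g(\omega_j)-\int g\,dR_\gamma|$. Symmetrization produces a Rademacher average, and the Talagrand--Ledoux contraction principle for $\exp$ (Lipschitz with constant $e^{\gamma\kappa\|c\|_\infty}$) further reduces it to the Rademacher complexity of the linear index class $\{\alpha\sum_i\mu_iv_i:(\alpha,\mu)\in\overline{\mathcal{C}}_\ell\}=\{\sum_i\beta_iv_i:\beta_i\geq 0,\,\sum_i\beta_i\leq\gamma\kappa\|c\|_\infty\}$. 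H\"older's inequality factors out the $\ell^1$ radius $\gamma\kappa\|c\|_\infty$, leaving $E\max_{i\leq n_\ell}|N_\ell^{-1}\sum_j\epsilon_j v_i(\omega_j)|$, which Massart's finite class lemma bounds by $\sqrt{2\log n_\ell/N_\ell}$ since $\|v_i\|_\infty\leq 1$. Taking the worse of the stochastic and bias rates yields the stated bound.

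For \emph{Part~3}, the optimality inequalities
\begin{align*}
\hat\vartheta_\ell-\vartheta_\ell &\leq \sup_{g\in\mathcal{G}_\ell}\left(N_\ell^{-1}\sum_j g(\omega_j)-\int g\,dR_\gamma\right),\\
\vartheta_\ell-\hat\vartheta_\ell &\leq \sup_{g\in\mathcal{G}_\ell^-}\left(N_\ell^{-1}\sum_j g(\omega_j)-\int g\,dR_\gamma\right),
\end{align*}
with $\mathcal{G}_\ell^-=-\mathcal{G}_\ell$, combined with the two-sided bound on $\vartheta_\ell-\vartheta^*$ from the preliminary observations, assemble into the two envelopes for $\vartheta^*$ about $\hat\vartheta_\ell$. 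I would then invoke the Gaussian approximation for suprema of empirical processes of \cite{chernozhukov2014gaussian} to replace each empirical-process supremum by $N_\ell^{-1/2}$ times the supremum of a centered Gaussian process with covariance $\int g(\omega)g'(\omega)\,dR_\gamma(\omega)$ on the relevant index class, with remainder $o_{R_\gamma}(N_\ell^{-1/2})$. The covering-number and envelope inputs required by the approximation are supplied by the VC structure inherited from $\mathcal{V}$ (whose linear combinations make up $\mathcal{G}_\ell$), the uniform envelope $e^{\gamma\kappa\|c\|_\infty}$, and the slow-growth condition $N_\ell^{-1}\log n_\ell\to 0$.

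The main obstacle will be verifying the uniform-in-$\ell$ regularity conditions required by the Chernozhukov--Chetverikov--Kato Gaussian approximation, since both the sieve dimension $n_\ell$ and the index class $\mathcal{G}_\ell$ evolve with sample size; the polynomial bound $n_\ell\leq K\epsilon_\ell^{-\delta}$ from the VC-subgraph structure of $\mathcal{V}$ is what reconciles the sieve growth with their rate conditions. A secondary difficulty is Part~2's absorption of the continuous parameter $\alpha$ into a finite-class bound without introducing a spurious polynomial factor; the $\ell^1$-ball reformulation and the H\"older step above are what make Massart's lemma directly applicable.
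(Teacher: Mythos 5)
Your decomposition and techniques match the paper's proof almost exactly: Part~1(i) via the ULLN of Proposition~\ref{Prop - ULLN} applied to the empirical-process supremum sandwiching $|\hat\vartheta_\ell-\vartheta_\ell|$; Part~2 via a Lipschitz-$\log$ reduction, symmetrization, Talagrand--Ledoux contraction for $\exp$, the $\ell^1$-radius factorization, and Massart's finite-class lemma (this is precisely the paper's Lemma~\ref{Lemma - Rademacher complexity G} and Corollary~\ref{Corollary - Rademacher complexity}); and Part~3 via the two one-sided optimality inequalities plus the Chernozhukov--Chetverikov--Kato Gaussian approximation, with VC covering numbers and the uniform envelope supplying the needed regularity (this is the paper's Lemma~\ref{Lemma VC class} and Lemma~\ref{lem:gauss-approx-inf}).

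There is one genuine gap, in Part~1(ii). You write that along a weak-star convergent subsequence $\hat\xi_{\ell_k}\rightharpoonup\xi_\infty$, ``continuity of the Gelfand--Pettis integral and dominated convergence yield $\hat\vartheta_{\ell_k}\to\int_{\Omega}e^{\int v\,d\xi_\infty}\,dR_\gamma$.'' The continuity statement only delivers weak $L_1(R_\gamma)$-convergence of $\int_{\mathcal{V}}v\,d\hat\xi_{\ell_k}$ to $\int_{\mathcal{V}}v\,d\xi_\infty$, and dominated convergence cannot be applied to $z\mapsto\int_{\Omega}e^{z}\,dR_\gamma$ along a weakly convergent sequence: DCT needs pointwise (or in-measure) convergence of the integrands, which weak $L_1$-convergence does not provide, and $z\mapsto\int e^{z}\,dR_\gamma$ is a convex functional whose behavior under weak limits is only one-sided (lower semicontinuity). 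The paper handles this by passing through the Skorokhod representation theorem and then applying Fatou's lemma, which yields exactly the one-sided inequality $\int_{\Omega}e^{\int v\,d\xi_\infty}\,dR_\gamma\le\liminf_k\hat\vartheta_{\ell_k}=\vartheta^*$; combined with optimality of $\vartheta^*$ this suffices. An alternative way to rescue your DCT step is to observe that the range of the Gelfand--Pettis integral over $\Upsilon_0$ lies in the norm-compact set $[0,\gamma\kappa\|c\|_\infty]\cdot\overline{\text{co}}(\mathcal{V})\subset L_1(R_\gamma)$ (norm-compact by Lemma~\ref{Lemma - KMT and MT}), on which the weak and norm topologies coincide, so the weak convergence is in fact norm convergence, hence subsequential a.s.\ convergence, and then bounded convergence applies. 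As written, however, this justification is missing, and the step does not go through.
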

\begin{proof}
See Appendix~\ref{Proof - Thm Stoch Opt Sieve}.
\end{proof}
\noindent An immediate consequence of the result in Part 1(i) of Theorem~\ref{Thm Stochastic Opt Sieve} is on estimation of the EOT value~(\ref{eq - Entropc OTP}). In the notation of Section~\ref{Section Framing as MIs}, this value is 
given by $\gamma^{-1}\left(\log a_\gamma+m(p_\gamma)\right)$, and with probability 1,
\begin{align*} 
\lim_{\ell\rightarrow\infty}\gamma^{-1}\left(\log a_\gamma-\log\hat{\vartheta}_\ell\right)=\gamma^{-1}\left(\log a_\gamma-\log\vartheta^*\right)=\gamma^{-1}\left(\log a_\gamma+m(p_\gamma)\right),
\end{align*}
by an application of the Continuous Mapping Theorem and optimal value duality~(\ref{eq - strong duality}).

\par
Sections~\ref{Section Framing as MIs}--\ref{Section Sieve M-Estimation} together
yield a two-stage construction: a deterministic sieve approximation of the
Fenchel dual of the $I$-projection problem~\eqref{eq - Fenchel Dual Problem},
followed by a stochastic approximation of the resulting finite-dimensional
convex programs via the SAA method. Theorem~\ref{Thm Stochastic Opt Sieve} summarizes the statistical consequences
of this construction, establishing (i) almost sure consistency for the
estimators of the dual optimal value and the associated dual solutions,
(ii) finite-sample mean-error bounds for the estimator of the primal optimal
value, and (iii) stochastic upper and lower error bounds for the estimation
error of the dual optimal value. The remainder of the paper discusses how these guarantees compare with the
existing statistical optimal transport literature. 

\section{Discussion}\label{Section Discussion}
This section contrasts the present moment-inequality-based approach with the
empirical Sinkhorn divergence and with continuity-driven dual parametrizations,
and it explains how the population-level constraint structure underlying the
sieve M-estimator leads to different modes of convergence and different
inferential tools. It also outlines extensions to entropic optimal transport
problems with additional moment restrictions.
\subsection{Other Approaches}

\par A large body of work on approximating entropically regularized optimal transport
focuses on computational formulations of the Fenchel--Rockafellar dual, often
implemented via the Sinkhorn algorithm. In the general setting, the Fenchel--Rockafellar
dual can be written as in \cite{nutz2021entropic},
\begin{align*}
\sup_{f\in L_1(P_X),\, g\in L_1(P_Y)}
& \left(
\int_{\mathcal{X}} f\,dP_X
+
\int_{\mathcal{Y}} g\,dP_Y
\right.\\
&\qquad\left.-
\frac{1}{\gamma}
\Big(
\int_{\mathcal{X}\times\mathcal{Y}}
e^{\gamma(f(x)+g(y)-c(x,y))}
\, d(P_X\otimes P_Y)
- 1
\Big)
\right),
\end{align*}
\noindent where the functions $f$ and $g$ are called dual or Schr\"odinger potentials in the literature. Most practical algorithms do not use this $L_1$-based convex-analytic dual. Instead, they require the potentials to be continuous and use parametrizations ensuring pointwise evaluation and stochastic-gradient updates are well defined. This category includes the RKHS-based sieve of \cite{Genevey} and the neural-network parametrizations of \cite{Seguy} and \cite{Arjovsky2017WGAN}. The RKHS approach assumes the true potentials belong to a fixed RKHS—a strong modeling assumption that fails for many continuous functions on compact sets, as demonstrated by Theorem~1.1 of \cite{Steinwart2024}. Neural-network parametrizations avoid this restriction but still require continuity of the potentials as a structural assumption motivated by computational considerations.

The dual framework developed in this paper fundamentally departs from continuity-based approaches. Here, the dual variables are finite Radon measures that act on a VC class of moment functions, placing them naturally in the dual space $C(\mathcal{V})^*$ with the weak-star topology. Unlike previous methods, this framework does not require continuity of the Schr\"odinger potentials. This choice aligns with the analytical structure of the entropic dual: regularization removes the pointwise constraint $f(x) + g(y) \le c(x, y)$ found in unregularized optimal transport, so integrability—not continuity—is the minimal regularity ensured by the Schr\"odinger system. As a result, the sieve M-estimator developed here preserves the dual problem's population-level structure without imposing unnecessary smoothness conditions.

A comparison with the empirical Sinkhorn divergence further clarifies the fundamental differences in how these estimators use data and the EOT constraint set. The sieve M-estimator enforces the EOT constraints at the population level by approximating the moment inequalities that define the Schr\"odinger system, guided by the geometry of the population-level feasible set of couplings. In contrast, the empirical Sinkhorn divergence replaces the true marginals $P_X$ and $P_Y$ with their empirical versions, solving a discrete EOT problem that enforces only empirical marginal constraints and derives its geometry entirely from the sampled data. As a result, the sieve M-estimator preserves the analytical structure of the population constraints and uses data solely to approximate expectations in the dual objective. In contrast, the empirical Sinkhorn divergence treats the empirical distributions as if they were the population distributions. This distinction has practical implications: the two estimators respond differently to sampling variability and to the geometry of the underlying distributions, and only the sieve estimator maintains the population constraint set inherent in the EOT problem.

These structural differences shape their statistical guarantees. Notably, only the empirical Sinkhorn divergence currently has established convergence rates or central limit theorems—and even these results hold only in special cases involving smooth or quadratic costs. In contrast, Theorem~\ref{Thm Stochastic Opt Sieve} shows that the sieve M-estimator achieves consistency under much weaker conditions and provides finite-sample mean error bounds and stochastic approximations valid for all continuous cost functions on compact supports. I discuss these results in detail below.

\subsection{Part 1 of Theorem~\ref{Thm Stochastic Opt Sieve}}

The first part of Theorem~\ref{Thm Stochastic Opt Sieve} establishes the almost sure convergence of both the sieve optimal value estimator and the corresponding dual estimators, for any continuous cost function on compact supports. This significantly broadens the scope of statistical optimal transport beyond the traditional focus on smooth or quadratic costs and empirical Sinkhorn divergence. The main distinction between my dual consistency result and previous empirical Sinkhorn analyses lies in the mode and topology of convergence: my dual estimators converge almost surely in the weak-star topology of $C(\mathcal{V})^*$, whereas empirical Sinkhorn results typically establish convergence in mean with respect to the $L_2(\hat{P}_X \otimes \hat{P}_Y)$ norm, where $\hat{P}_X$ and $\hat{P}_Y$ are the empirical marginals (see, e.g., Lemma~4.8 in Section~4.2 of \citealp{ChewiNilesWeedRigollet2025}). The question of deriving convergence rates for my dual variable estimator remains open and is left for future work, though I briefly outline potential approaches below.

\par This difference in convergence mode has important practical implications. Weak-star almost sure convergence offers a strong qualitative guarantee: the estimated dual variables converge in the weak-star topology for almost every sample path, not just on average. This property is especially valuable for downstream tasks--such as sensitivity analysis, specification testing, or constructing confidence bounds for transport costs--where a stable dual representation is crucial. In contrast, mean $L_2$ convergence for empirical Sinkhorn duals depends on the empirical marginals and reflects approximation quality averaged over the sampled supports. While both types of convergence are informative, weak-star almost sure convergence uniquely captures how my dual variables utilize population-level information, rather than only the empirical support. This makes it particularly well suited for applications that rely on the dual structure inherent to the EOT problem.

\par A natural starting point for studying rates of convergence of my dual variable
estimator is to endow the parameter space with a compatible metric. Recall 
that the dual variables lie in the set $\Upsilon_0 
  \;=\; 
  \bigl\{
    \xi \in \Upsilon : \|\xi\|_{TV} \le \gamma\kappa\|c\|_\infty 
  \bigr\}$, where $\|\cdot\|_{TV}$ is the total variation norm on $C(\mathcal{V})^*$. As it is a norm-bounded subset of $C(\mathcal{V})^*$, by the 
Banach--Alaoglu theorem it must be weak-star compact, and the weak-star topology on it is metrizable. A convenient metric 
arises from the parametrization
\[
  \Phi : (0,\gamma\kappa\|c\|_\infty] \times \mathcal{P} \to \Upsilon_0,
  \qquad
  \Phi(\alpha,\mu) = \alpha \mu,
\]
where $\mathcal{P}$ denotes the set of Radon probability measures on 
\(\mathcal{V}\). Endow $(0,\gamma\kappa\|c\|_\infty]$ with the usual 
metric and $\mathcal{P}$ with any metric that metrizes weak convergence 
(e.g., the bounded--Lipschitz or Prokhorov metric). Then $\Phi$ is a 
homeomorphism between the product space and $(\Upsilon_0,\text{weak-star})$, 
so the weak-star topology on $\Upsilon_0$ may be described by a product metric. 
This makes it possible, at least in principle, to study rates of convergence of 
the dual estimators by analyzing the rates at which the scalar components 
$\hat{\alpha}_\ell$ and the probability-measure components $\hat{\mu}_\ell$ 
approach their population counterparts. A full development of such rates would 
require delicate stability properties of the dual problem and is therefore left 
for future research, but this parametrization clarifies how a metric-based 
analysis could proceed.

\subsection{Part 2 of Theorem~\ref{Thm Stochastic Opt Sieve}}
\subsubsection{Comparison to Theorem~3 in~\cite{GenevayCuturiPeyreBach2019}}

The second result of Theorem~\ref{Thm Stochastic Opt Sieve} provides a
finite-sample rate on the approximation error incurred by the sieve
M-estimator when estimating the optimal value of the EOT problem, namely
\[
\gamma^{-1}\bigl( \log a_\gamma + \log \vartheta^* \bigr),
\]
under Assumption~\ref{Assump -Primities OTP}. It is instructive to compare
this bound with the in-mean convergence rate of the empirical Sinkhorn
divergence established in Theorem~3 of~\cite{GenevayCuturiPeyreBach2019}.
Their result applies when the cost function is infinitely differentiable
and $L$-Lipschitz and when $\mathcal{X}$ and $\mathcal{Y}$ are bounded
subsets of $\mathbb{R}^d$. In our notation, their discrepancy is bounded
(up to constants) by
\[
\frac{e^{\gamma(2L|\mathcal{X}|+\|c\|_{\infty})}}{\sqrt{N}}
\Bigl(1 + \gamma^{\lfloor d/2\rfloor}\Bigr),
\]
where $|\mathcal{X}|$ denotes the diameter of $\mathcal{X}$, and the
constants depend only on $|\mathcal{X}|$, $|\mathcal{Y}|$, $d$, and
$\|c^{(k)}\|_{\infty}$ for $k = 0, \ldots, \lfloor d/2\rfloor$.

There are several points of comparison and contrast.

\textbf{(1) Scope of applicability.}
My rate applies to \emph{all continuous} cost functions on compact
supports, whereas theirs requires the substantially narrower class of
infinitely differentiable, globally Lipschitz costs. Both bounds exhibit
exponential dependence on~$\gamma$. In the quadratic case 
$c(x,y)=\|x-y\|^2/2$, \cite{MenaNilesWeed2019} show that this exponential
dependence can be removed for the empirical Sinkhorn divergence. It is
plausible that an analogous refinement may be possible for my sieve
M-estimator, and I leave this investigation for future research.

\textbf{(2) Dependence on dimension and regularization.}
In~\cite{GenevayCuturiPeyreBach2019}, the factor 
$\bigl(1 + \gamma^{\lfloor d/2\rfloor}\bigr)$ introduces a pronounced
curse of dimensionality, especially for large~$\gamma$. In contrast, my
rate depends on dimension and regularization only through $\log n_\ell$,
where $n_\ell$ is the size of the sieve. When the partition of
$\mathcal{V}$ is chosen independently of $R_\gamma$, this
removes the $\gamma^{d/2}$ effect entirely. Even when the sieve is
adapted to geometric or distributional features of $P_X$, $P_Y$, or
$R_\gamma$, the influence of such features is substantially attenuated by
the logarithm.

\textbf{(3) Nature of the convergence rate.}
The rate obtained for the sieve estimator is nonparametric--it depends on
$\max\left\{\epsilon_\ell,\ \sqrt{2\log(n_\ell)/N_\ell}\right\}$,
and therefore may be slower than $N^{-1/2}$. Nevertheless, the numerical
experiment in Appendix~\ref{Section - Implementation} shows that, for a moderate sample
size and a well-chosen sieve, the proposed estimator can outperform the
empirical Sinkhorn divergence in practice, despite its theoretically
slower rate. This highlights the practical advantage of adapting the
approximation to the structure of the $I$-projection rather than to the
sample-induced discrete geometry.

\subsubsection{Bias--variance trade-off in the rate}

\par The rate established in Part~2 of Theorem~\ref{Thm Stochastic Opt Sieve}
makes explicit the bias--variance trade-off inherent in the nonparametric
estimation of $\gamma^{-1}\log\vartheta^*$. As is typical in nonparametric
settings, the stochastic component of the error depends on the size and
complexity of the sieve function class $\mathcal{G}_\ell$. The term
$\sqrt{2\log(n_\ell)/N_\ell}$ increases with $n_\ell$ and, by
Lemma~S3.1, quantifies this complexity through a Rademacher-type bound.
Enlarging $\mathcal{G}_\ell$ introduces greater variability into
$\gamma^{-1}\log\hat{\vartheta}_\ell$, thereby inflating the variance,
whereas the deterministic approximation error $\epsilon_\ell$ represents a
bias term that decreases as $\mathcal{G}_\ell$ becomes richer. Thus,
reducing bias inevitably increases variance, reflecting the classical
bias--variance trade-off.

\par In standard nonparametric estimation, the sample size is fixed and the
optimal convergence rate is obtained by choosing a sieve dimension that
balances the bias and variance terms. By contrast, when the EOT problem~(\ref{eq - Entropc OTP}) is
viewed as a stochastic optimization problem, the present framework first
specifies the deterministic approximation error $\epsilon_\ell$, which in turn
determines the size of $\mathcal{G}_\ell$, and then selects a sample size that
satisfies the rate condition~\eqref{eq - entropy bound}. The optimal balance
is achieved by equating the stochastic and deterministic errors:
\begin{align}\label{eq - Optimal Sample size}
\sqrt{\frac{2\log n_\ell}{N_\ell}}=\epsilon_\ell
\quad\Longleftrightarrow\quad
N_\ell = \frac{2\log n_\ell}{\epsilon_\ell^2}.
\end{align}
A sample size of this order automatically satisfies the entropy
condition~\eqref{eq - entropy bound} and yields the corresponding optimal rate
of convergence for the sieve M-estimator.

\par Beyond clarifying the bias--variance trade-off, this calculation provides
practically useful guidance on sample size selection. To the best of my
knowledge, the Statistical Optimal Transport literature offers no explicit
prescriptions for determining the number of samples required to achieve a
desired accuracy level---whether for empirical Sinkhorn, RKHS-based methods,
or neural-dual parametrizations. The expression
$N_\ell = 2\,\epsilon_\ell^{-2}\log n_\ell$
gives a concrete, interpretable sample size that links the target
accuracy~$\epsilon_\ell$ to the complexity of the sieve. Such guidance is
especially valuable in applications where practitioners must tune both the
regularization parameter $\gamma$ and the resolution of the approximation
scheme, yet currently have no principled tool for determining how large a
sample is needed for a given precision.

\subsection{Part 3 of Theorem~\ref{Thm Stochastic Opt Sieve}}
\subsubsection{Comparison with Empirical Sinkhorn Divergence}
\par
It is instructive to contrast the stochastic bounds in Part~3 of
Theorem~\ref{Thm Stochastic Opt Sieve} with recent results on the sample
complexity of the empirical Sinkhorn divergence, such as Theorem~3
in~\cite{RigolletStromme2024SampleComplexitySinkhorn}.
The latter provides finite-sample concentration inequalities that control the
estimation error in probability at a fixed sample size, for EOT value~(\ref{eq - Entropc OTP}). By contrast, the present result yields an asymptotic stochastic approximation for
the sieve M-estimator of the optimal value of the Fenchel dual problem,
$\vartheta^*$. Here, the estimation error is bounded above and below by suprema of Gaussian
processes indexed by the sieve class and scaled by $N_\ell^{-1/2}$,
up to a negligible remainder in probability.

These two types of results address complementary inferential questions.
Concentration inequalities quantify tail behavior of the empirical Sinkhorn
divergence viewed as a discretized transport problem.
The Gaussian-process bounds derived here describe the full asymptotic
fluctuation envelope of a population-level stochastic optimization problem,
and are naturally suited to inference procedures such as confidence intervals
and projection-based uncertainty quantification.

\subsubsection{Confidence Interval for EOT value}
\par The third part of Theorem~\ref{Thm Stochastic Opt Sieve} provides matching
upper and lower stochastic inequalities for the estimation error of
$\hat{\vartheta}_\ell$. These bounds naturally lead to the construction of an
asymptotically valid confidence interval for $\vartheta^*$, say $[b_\ell,b^\prime_\ell]$.
Because the map $\vartheta \mapsto \gamma^{-1}(\log a_\gamma - \log \vartheta)$ is
strictly decreasing on $(0,\infty)$, this interval can be projected into an
asymptotically valid confidence interval for the regularized optimal transport
value $\gamma^{-1}(\log a_\gamma - \log \vartheta^*)$ via
\[
\Bigl[
\gamma^{-1}\bigl(\log a_\gamma - \log b^\prime_\ell\bigr),
\quad
\gamma^{-1}\bigl(\log a_\gamma - \log b_\ell\bigr)
\Bigr].
\]

\par Let $q^{(l)}_{\delta,\ell}$ and $q^{(u)}_{\delta,\ell}$ denote the
$\delta$-quantiles of $\sup_{g\in\mathcal{G}_\ell} B^{l}_{\ell}(g)$ and
$\sup_{g\in\mathcal{G}^{-}_\ell} B^{u}_{\ell}(g)$, respectively.
At the population level, Part~3 of Theorem~\ref{Thm Stochastic Opt Sieve}
suggests the asymptotic $(1-\delta)$ confidence interval
\[
\Bigl[
\hat{\vartheta}_\ell
- N_\ell^{-1/2} q^{(l)}_{1-\delta/2,\ell}
- \epsilon_\ell\, \gamma \kappa\|c\|_{\infty}e^{\gamma \kappa \|c\|_{\infty}},
\quad
\hat{\vartheta}_\ell
+ N_\ell^{-1/2} q^{(u)}_{1-\delta/2,\ell}
\Bigr]
\]
for $\vartheta^*$. Since $\mathcal{G}_\ell^-=-\mathcal{G}_\ell$, the Gaussian
processes $B^{l}_{\ell}$ and $B^{u}_{\ell}$ share the same covariance kernel and
therefore have the same distribution, albeit indexed by different classes.

\par The lower bound includes a deterministic bias term
$\epsilon_\ell\, \gamma \kappa\|c\|_{\infty}e^{\gamma \kappa \|c\|_{\infty}}$,
which may be large for moderate or large values of $\gamma$.
For practical implementation, it is therefore natural to consider the symmetric
interval
\begin{equation}\label{eq:SymmetricCI}
\Bigl[
\hat{\vartheta}_\ell
- N_\ell^{-1/2} q^{(l)}_{1-\delta/2,\ell},
\;
\hat{\vartheta}_\ell
+ N_\ell^{-1/2} q^{(u)}_{1-\delta/2,\ell}
\Bigr],
\end{equation}
which omits this bias term. This modification is asymptotically valid under a
standard undersmoothing condition: if $\sqrt{N_\ell}\epsilon_\ell=o(1)$, in
addition to the rate condition~\eqref{eq - entropy bound} for fixed $\gamma$,
then the omitted bias is negligible at the $N_\ell^{-1/2}$ scale.

\par The optimal sample size choice in~\eqref{eq - Optimal Sample size} balances
stochastic and approximation errors but does not satisfy this undersmoothing
condition. More generally, the requirement $\sqrt{N_\ell}\epsilon_\ell=o(1)$
depends on how the sieve dimension $n_\ell$ grows with the approximation error
$\epsilon_\ell$, which in turn is determined by the chosen partition of the
moment class $\mathcal{V}$. Loosely speaking, selecting $N_\ell$ slightly larger
than the bias--variance balance eliminates the approximation bias asymptotically.
While a full analysis of optimal tuning is beyond the scope of this paper, the
explicit form of the stochastic bounds provides clear guidance on how sample size
and sieve complexity interact.

\par Finally, the quantiles $q^{(l)}_{1-\delta/2,\ell}$ and
$q^{(u)}_{1-\delta/2,\ell}$ are unknown in practice and must be estimated from
the data. Any procedure yielding consistent estimators of these quantiles as
$\ell\to\infty$—such as Gaussian approximation or multiplier bootstrap methods
based on the empirical covariance structure of $\mathcal{G}_\ell$ in the spirit
of \cite{Chernozhukov-Anti-Concentration2014}—leads to asymptotically valid
confidence intervals for $\vartheta^*$.

\subsection{An Extension: Entropic Optimal Transport with Martingale Constraints}
Recent work, such as \cite{tang2025efficientalgorithmentropicoptimal}, has begun exploring entropically regularized martingale optimal transport problems from a computational perspective. These studies identify the existence, structure, and dual representations for entropic martingale transport, but leave open the question of statistical estimation under sampling.

The sieve M-estimation framework in this paper naturally extends to entropic martingale optimal transport. By formulating martingale and supermartingale constraints as conditional moment equalities or inequalities, and then converting them into (generally infinite) collections of unconditional moment restrictions using instrument functions, as in \cite{Tabri-MOOR-2025}, the resulting class of moment functions stays uniformly bounded and VC under Assumption 1, allowing the sieve approximation and stochastic optimization strategies presented here to handle these constraints and provide a principled estimation route.

However, to establish full statistical guarantees for constrained transport problems, we still need to develop additional tools. In particular, researchers have yet to find sharp bounds on the dual optimizers under general conditional moment restrictions--bounds that are crucial for controlling the stochastic program domains and for deriving uniform laws of large numbers or Gaussian approximations. Advancing these aspects remains an important area for future research.

More broadly, this extension demonstrates the flexibility of viewing entropically regularized optimal transport as a stochastic optimization problem under moment restrictions. By focusing on population-level constraint sets rather than sample-induced discretizations, my framework creates new opportunities for statistical inference in constrained transport problems--areas previously explored mainly through analytical or computational methods.

\subsection{Further Applications and Open Directions}

The above discussion shows that the proposed sieve M-estimation framework relies on the moment-inequality structure of entropically regularized optimal transport, rather than on the analytic properties of dual potentials. This focus opens the door to a wide range of applications where entropic optimal transport yields a well-defined $I$-projection, even if we lack detailed regularity or representation results for the corresponding Schr\"odinger potentials.

Recent literature illustrates the variety of constraints we can express as moment restrictions in entropic optimal transport. Beyond classical marginal constraints, martingale and supermartingale constraints play a central role in model-independent finance and stochastic control (see, e.g., \citealp{BeiglbockHenryLaborderePenkner2013,Beiglbock-Juillet,Nutz-Stebegg}, for supermartingale optimal transport; and  \citealp{tang2025efficientalgorithmentropicoptimal,NutzWieselMartingaleSB}, for entropic and Schr\"odinger bridge extensions). Convex order and dominance constraints can also be formulated as infinite families of linear moment inequalities indexed by convex test functions, which arise in risk aggregation and no-arbitrage pricing (\citealp{Strassen1965,Ruschendorf1985,BeiglbockPenkner2013}). Related moment-based approaches appear also in causal and adapted transport (\citealp{Lassalle2018,BackhoffVeraguasPammer2020}) and in Schr\"odinger bridge problems with structural or path-dependent constraints (\citealp{LEONARD}). In many of these cases, one can establish entropic minimizers and dual representations using the results of~\cite{Tabri-MOOR-2025}, but not sharp regularity results for the dual variables--such as continuity, boundedness, or smoothness.

My approach in this paper departs from traditional potential-based computational and analytical methods. Instead of parametrizing dual variables as functions and imposing continuity or smoothness for pointwise optimization, I operate directly on the population-level constraint set through moment inequalities. I treat dual variables as Radon measures acting on classes of moment functions and estimate them by approximating expectations in the Fenchel dual formulation. This method remains well defined even when Schr\"odinger potentials lack simple functional representations.

From this perspective, the lack of sharp results on dual potentials does not prevent statistical analysis of the corresponding entropic transport values. Instead, we need sufficient structural properties--such as boundedness of dual optimizers and VC complexity of the moment class--to control the stochastic approximation. Establishing these features for specific constrained entropic transport problems remains an important direction for future research.

Overall, this discussion highlights a conceptual shift: we can develop statistical optimal transport at the level of $I$-projections and moment restrictions, without requiring strong regularity assumptions on dual potentials. This approach enables statistical inference across a wide range of constrained entropic transport problems, many of which researchers currently study primarily from variational or computational perspectives.

\bibliographystyle{chicago}
\bibliography{mcgilletd}

\appendix
\section{Proofs of Results}
\subsection{Proposition~\ref{Thm - Fenchel}}\label{Proof- Fenchel}
\begin{proof}
\par {\bf Part 1}. We establish $\arginf\left\{\int_{\Omega}e^{z}\,dR_\gamma: z\in \mathcal{D}\right\}\neq\emptyset$ through the existence of an element being a cluster point of the sequence $\{z_{0,n}\}_{n\geq1}$, in the norm topology of $L_1(R_\gamma)$, where $$z_{0,n}=\arginf\left\{ \int_{\Omega}e^{y}\,dR_\gamma : z\in\mathcal{D}_n\right\}\quad\text{with}\quad\mathcal{D}_n=\left\{z\in\mathcal{D}:\alpha\leq \bar{\alpha}_n\right\}$$ for each $n$, and $\bar{\alpha}_n\nearrow\infty$ as $n\rightarrow\infty$. Lemma~\ref{Lemma - H-L} shows that I meet all of the conditions to apply Theorem~2.3 of~\citet{Mena-Lerma-2005}, so that 
\begin{align}
\text{OL}\left\{\{z_{0,n}\}_{n\geq1}\right\}\subset \arginf\left\{\int_{\Omega}e^{z}\,dR_\gamma: z\in \mathcal{D}\right\},
\end{align}
holds, where $\text{OL}\left\{\{z_{0,n}\}_{n\geq1}\right\}$ denotes the outer limit of $\{z_{0,n}\}_{n\geq1}$, which is also the set of cluster points of the sequence in the norm topology of $L_1(R_\gamma)$. Lemma~\ref{Lemma - Fenchel Dual existence of cluster point} establishes $\text{OL}\left\{\{z_{0,n}\}_{n\geq1}\right\}\neq\emptyset$. Hence, $\arginf\left\{\int_{\Omega}e^{z}\,dR_\gamma: z\in \mathcal{D}\right\}\neq\emptyset$. 

\par Now I shall establish the uniqueness a.s.$-R_\gamma$ of the minimizer. As any $v\in\mathcal{V}$ statisfies $\int_{\Omega}e^{v}\,dR_\gamma<\infty$, the minimizers cannot be where the objective function equals $\infty.$ Combining this implication with the strict convexity of the map $z\mapsto \int_{\Omega}e^{z}\,dR_\gamma$ on $\mathcal{D}$, implies that there is a unique minimizer (up to equivalence class). Let $\beta\in(0,1)$ and $z_1,z_2\in \mathcal{D}$ such that $z_1\neq z_2$ holds as equivalence classes. Additionally, let $z_3=\beta\, z_1+(1-\beta)\,z_2$. Then $\int_{\Omega}e^{z_3}\,dR_\gamma< \beta\int_{\Omega}e^{z_1}\,dR_\gamma+(1-\beta)\int_{\Omega}e^{z_3}\,dR_\gamma$, holds, by the strict convexity of the exponential function. This establishes the strict convexity of the map, and hence, the set of minimizers $\arginf\left\{\int_{\Omega}e^{z}\,dR_\gamma: z\in \mathcal{D}\right\}$, is unique up to equivalence class.

\par Next, I develop the representation of the $I$-projection's $R_\gamma$-density. From the above arguments let $z_{0,\gamma}=\arginf\left\{\int_{\Omega}e^{z}\,dQ: z\in \mathcal{D}\right\}$. The set $\mathcal{D}$ is convex, and the objective function $g(z)=\int_{\Omega}e^{z}\,dR_\gamma$ is G\^{a}teaux differentiable, then by Theorem~2 on page 178 of~\citet{Luenberger}, $\frac{d}{dt}g\left(z_{0,\gamma}+t(z-z_{0,\gamma})\right)\mid_{t=0}\,\geq0$ $\forall z\in\mathcal{D}$,
yielding $\int_{\Omega}(y-y_0)e^{y_0}\,dQ\geq0$ $\forall y\in\mathcal{D}$. By choosing $y=cy_0$ first with $c>1$ and then with $c<1$ (since $\mathcal{D}$ is also a cone), I obtain
\begin{align}\label{eq - proof thm 2 0}
\int_{\Omega}z_{0,\gamma}e^{z_{0,\gamma}}\,dR_\gamma=0,\;\text{and}\quad \int_{\Omega}ze^{z_{0,\gamma}}\,dR_\gamma\geq0\;\forall z\in\mathcal{D}.
\end{align}
Let $p_\gamma=\frac{e^{z_{0,\gamma}}}{\int_{\Omega}e^{z_{0,\gamma}}\,dR_\gamma}$, and note that the second part of~(\ref{eq - proof thm 2 0}) implies that $\int_{\Omega}v\,p_\gamma\,dR_\gamma\geq0\quad\forall v\in\mathcal{V}$; hence, $p_\gamma\in\mathcal{M}$. Furthermore, $m(p_\gamma)+\log\left(\int_{\Omega}e^{z_{0,\gamma}}\,dR_\gamma\right)=\int_{\Omega}z_{0,\gamma}e^{z_{0,\gamma}}\,dR_\gamma=0$, holds, by the first part of~(\ref{eq - proof thm 2 0}). Hence, by Theorem~2.2 of~\citet{Bhatacharya-Dykstra}, $p_\gamma$ solves the $I$-projection problem.

\par {\bf Part 2}. The proof proceeds by the direct method. Firstly, observe that $\exists v\in\mathcal{V}$ such that $\int_{\Omega}v\,dR_\gamma>0$; otherwise, the solution of the $I$-projection problem~(\ref{eq - KL Min Problem}) would be $R_\gamma$. Hence, by Part (ii) of Theorem~1 in \cite{Tabri-MOOR-2025},  $z_{0,\gamma}\in\overline{\text{span}_{+}(B)}$, where $\overline{\text{span}_{+}(B)}$ is the $L_1(R_\gamma)$-norm closure of the positive linear span of  \\ $B=\left\{v\in\mathcal{V}:\int_{\Omega}v\,dP_\gamma=0\right\}$. Now $B=\mathcal{V}$ must hold, as the $I$-projection problem is a moment inequality formulation of the equality constraints that set the marginal distributions. Therefore, I must establish that $y_0\in\overline{\text{span}_+(\mathcal{V})}$, holds. Since $R_\gamma$ violates the moments inequality restrictions, $z_{0,\gamma}\in\mathcal{D}$ implies that $z_{0,\gamma}=\alpha_{0,\gamma}\,z^\prime_{0,\gamma}$ with $\alpha_{0,\gamma}>0$ and $z^\prime_{0,\gamma}\in \overline{\text{co}}(\mathcal{V})$. The result of Lemma~\ref{Lemma - KMT and MT} implies that there are only two cases to consider in establishing the desired result: (i) $z^\prime_{0,\gamma}\in\text{ex}\left(\overline{\text{co}}(\mathcal{V})\right)$, and (ii) $z^\prime_{0,\gamma}\not\in\text{ex}\left(\overline{\text{co}}(\mathcal{V})\right)$, where $\text{ex}\left(\overline{\text{co}}(\mathcal{V})\right)$ denotes the set of extreme points of $\overline{\text{co}}(\mathcal{V})$. 

\par Starting with case (i), since $\text{ex}\left(\overline{\text{co}}(\mathcal{V})\right)\subset\mathcal{V}$ also by Lemma~\ref{Lemma - KMT and MT}, it must be that $z^\prime_{0,\gamma}\in\mathcal{V}$, and therefore, $z_{0,\gamma}\in\overline{\text{span}_+(\mathcal{V})}$. Next, consider case (ii): $z^\prime_{0,\gamma}\not\in\text{ex}\left(\overline{\text{co}}(\mathcal{V})\right)$. Then, $\exists n\in\mathbb{Z}_+$, $p_i>0$ for each $i=1,\ldots,n$ such that $\sum_{i=1}^{n}p_i=1$, for which $z_{0,\gamma}=\alpha_{0,\gamma} \sum_{i=1}^{n}p_iv_i$, where $\alpha_{0,\gamma}>0$ and $\left\{v_1,\ldots,v_n\right\}\subset\text{ex}\left(\overline{\text{co}}(\mathcal{V})\right)\subset\mathcal{V}$. Consequently, $z_{0,\gamma}\in\text{span}_+(\mathcal{V})\subset\overline{\text{span}_+(\mathcal{V})}$. 
\end{proof}

\subsection{Corollary~\ref{Coro - dual GF}}\label{Proof- Fenchel G-F}
\begin{proof}
The proof proceeds by the direct method. I only need to establish the equality \\ $\mathcal{S}^*=\left\{\xi\in\Upsilon: z_{0,\gamma}=\int_{\overline{\mathcal{V}}}v\,d\xi(v) \right\},$ as the non-emptiness trivially holds because $z_{0,\gamma}$ exists under the aforementioned conditions, and that $z_{0,\gamma}$ has a G-F integral representation~(\ref{eq - Result of Rudin Extended}). I start with the direction ``$\subset$''. Let $\xi^\prime\in\mathcal{S}^*$, then we must show that $z_{0,\gamma}=\int_{\mathcal{V}}v\,d\xi^\prime(v),$ holds. By optimality of $\xi^\prime$, $\int_{\Omega}e^{\int_{\mathcal{V}}v\,d\xi^\prime(v)}\,dR_\gamma\leq \int_{\Omega}e^{\int_{\mathcal{V}}v\,d\xi(v)}\,dR_\gamma\quad\forall \xi\in\Upsilon$, and that $\exists z^\prime\in\mathcal{D}$ such that $z^\prime=\int_{\mathcal{V}}v\,d\xi^\prime(v)$, observe that 
\begin{align}\label{eq - proof coro Fenchel GF 0}
\int_{\Omega}e^{z^\prime}\,dR_\gamma=\int_{\Omega}e^{\int_{\mathcal{V}}v\,d\xi^\prime(v)}\,dR_\gamma\leq \int_{\Omega}e^{\int_{\mathcal{V}}v\,d\xi(v)}\,dR_\gamma\quad\forall \xi\in\Upsilon.
\end{align}
Now by the G-F integral representation~(\ref{eq - Result of Rudin Extended}), I can rewrite~(\ref{eq - proof coro Fenchel GF 0}) as
\begin{align}
\int_{\Omega}e^{z^\prime}\,dR_\gamma=\int_{\Omega}e^{\int_{\mathcal{V}}v\,d\xi^\prime(v)}\,dR_\gamma\leq \int_{\Omega}e^z\,dR_\gamma\quad\forall z\in\mathcal{D},
\end{align} 
and hence, $\int_{\Omega}e^{\int_{\mathcal{V}}v\,d\xi^\prime(v)}\,dR_\gamma=\int_{\Omega}e^{z^\prime}\,dR_\gamma=\int_{\Omega}e^{z_{0,\gamma}}\,dR_\gamma$ must hold by Proposition~\ref{Thm - Fenchel}. Since $z_{0,\gamma}$ is unique (up to equivalence class) I must have $z_{0,\gamma}=z^{\prime}$ a.s.$-R_\gamma$, and hence,  $z_{0,\gamma}=\int_{\mathcal{V}}v\,d\xi^\prime(v)$ a.s.$-R_\gamma$ by the G-F integral representation~(\ref{eq - Result of Rudin Extended}). Therefore, $\xi^\prime\in\left\{\xi\in\Upsilon: z_{0,\gamma}=\int_{\mathcal{V}}v\,d\xi(v) \right\}$.

\par Next, I consider the direction ``$\supset$''. let $\xi^\prime\in\left\{\xi\in\Upsilon: z_{0,\gamma}=\int_{\mathcal{V}}v\,d\xi(v) \right\}$, then we must show that $\xi^\prime\in\mathcal{S}^*$. By the G-F integral representation~(\ref{eq - Result of Rudin Extended}) and the optimality of $z_{0,\gamma}$, observe that
\begin{align}\label{eq - proof coro Fenchel GF 1}
\int_{\Omega}e^{\int_{\mathcal{V}}v\,d\xi^\prime(v)}\,dR_\gamma=\int_{\Omega}e^{z_{0,\gamma}}\,dR_\gamma\leq \int_{\Omega}e^z\,dR_\gamma\quad\forall z\in\mathcal{D}.
\end{align}
 Now by the G-F integral representation~(\ref{eq - Result of Rudin Extended}), I can rewrite~(\ref{eq - proof coro Fenchel GF 1}) as $$\int_{\Omega}e^{\int_{\mathcal{V}}v\,d\xi^\prime(v)}\,dR_\gamma\leq \int_{\Omega}e^{\int_{\mathcal{V}}v\,d\xi(v)}\,dR_\gamma\quad\forall \xi\in\Upsilon,$$ and hence, $\xi^\prime\in\mathcal{S}^*$.
\end{proof}

\subsection{Proposition~\ref{prop - alpha UB}}\label{Proof- Prop prop - alpha UB}
\begin{proof}
The proof proceeds by the direct method. Under Assumption~\ref{Assump -Primities OTP}, the cost function, $c$, is bounded; i.e., $\|c\|_{\infty}<\infty$. The $I$-projection, $P_\gamma$, has density with respect to $P_X\otimes P_Y$ given by
\begin{align}
\frac{dP_\gamma}{d\left(P_X\otimes P_Y\right)}(x,y)=\exp\left\{\gamma\left(z_{0,\gamma}(x,y)/\gamma-c(x,y)\right)\right\}/(\vartheta^*\,a_\gamma).
\end{align}
Under Assumption~\ref{Assump -Primities OTP}, I can apply Theorem~2.1 in~\cite{nutz2022eot} to establish that this density is also that of the unique coupling $P\in\Pi(P_X,P_Y)$ having the form 
\begin{align}
\frac{dP_\gamma}{d\left(P_X\otimes P_Y\right)}(x,y)=\exp\left\{\gamma(\varphi_\gamma(x)+\psi_\gamma(y)-c(x,y))\right\}/(\vartheta^*\,a_\gamma),
\end{align}
with $\int_{\mathcal{X}}\varphi_\gamma(x)\,dP_X=\int_{\mathcal{Y}}\psi_\gamma(y)\,dP_Y$, where $\varphi_\gamma\in L_1(P_X)$ and $\psi_\gamma\in L_1(P_Y)$ are the unique solution of the EOT's dual problem, given by
\begin{align*}
\sup_{f\in L_1(P_X),\, g\in L_1(P_Y)}
& \left(
\int_{\mathcal{X}} f\,dP_X
+
\int_{\mathcal{Y}} g\,dP_Y
\right.\\
&\qquad\left.-
\frac{1}{\gamma}
\Big(
\int_{\mathcal{X}\times\mathcal{Y}}
e^{\gamma(f(x)+g(y)-c(x,y))}
\, d(P_X\otimes P_Y)
- 1
\Big)
\right).
\end{align*}
Because $a_\gamma\leq 1$, holds, the above normalization implies $\int_{\mathcal{X}}\varphi_\gamma(x)\,dP_X,\int_{\mathcal{Y}}\psi_\gamma(y)\,dP_Y\geq 0$, under Assumption~\ref{Assump -Primities OTP}; see Remark~4.10 in~\cite{nutz2022eot}. 

\par In consequence, I obtain $z_{0,\gamma}(x,y)/\gamma=\varphi_\gamma(x)+\psi_\gamma(y)$ upon equating the two forms of the density $\frac{dP_\gamma}{d\left(P_X\otimes P_Y\right)}$. Now because $z_{0,\gamma}=\alpha_{0,\gamma}\,\int_{\mathcal{V}}v\,d\mu_{0,\gamma}(v)$, integrating both sides with respect to $P_X\otimes P_Y$ yields, $\int_{\mathcal{X}\times\mathcal{Y}}z_{0,\gamma}\,d\left(P_X\otimes P_Y\right)=0$, implying the desired normalization
\begin{align}
0=\int_{\mathcal{X}\times\mathcal{Y}}\left(\varphi(x)+\psi(y)\right)\,d\left(P_X\otimes P_Y\right)=\int_{\mathcal{X}}\varphi(x)\,dP_X+\int_{\mathcal{Y}}\psi(y)\,dP_Y.
\end{align}
Note that $\int_{\mathcal{X}\times\mathcal{Y}}z_{0,\gamma}\,d\left(P_X\otimes P_Y\right)=0$ follows from arguments in the proof of Part 2 of Proposition~\ref{Thm - Fenchel} that establish $z_{0,\gamma}=\alpha_{0,\gamma}z^\prime_{0,\gamma}$ with $\alpha_{0,\gamma}>0$ and $z^\prime_{0,\gamma}\in\text{ex}\left(\overline{\text{co}}(\mathcal{V})\right)$ or $z^\prime_{0,\gamma}\not\in\text{ex}\left(\overline{\text{co}}(\mathcal{V})\right)$. Observe that 
$\int_{\mathcal{X}\times\mathcal{Y}}z^\prime_{0,\gamma}\,d\left(P_X\otimes P_Y\right)=0$, holds, in either case as the moment functions had the form described in~(\ref{eq - moment fxns}).

\par Re-writing $z_{0,\gamma}/\gamma=\varphi_\gamma+\psi_\gamma$ as 
$(\alpha_{0,\gamma})/\gamma\,\int_{\overline{\mathcal{V}}}v\,d\mu_{0,\gamma}(v)=\varphi_\gamma+\psi_\gamma$, it follows that $\int_{\overline{\mathcal{V}}}v\,d\mu_{0,\gamma}(v)=\varphi_\gamma'+\psi_\gamma'$, where $\varphi_\gamma'=(\gamma/\alpha_{0,\gamma})\varphi_\gamma$ and
$\psi_\gamma'=(\gamma/\alpha_{0,\gamma})\psi_\gamma$. Now because $\int_{\overline{\mathcal{V}}}v\,d\mu_{0,\gamma}(v)\in\overline{\text{co}}(\mathcal{V})$, I must have that $\varphi_\gamma',\psi_\gamma',\in [-1,1]$. Hence, by Lemma~4.9 in~\cite{nutz2022eot}, $(\alpha_{0,\gamma}/\gamma)\varphi_\gamma'(x)\leq \|c\|_{\infty}$ for each $x\in\mathcal{X}$. Then, for $x$ such that $\varphi_\gamma'(x)>0$, $(\alpha_{0,\gamma}/\gamma)\leq \|c\|_{\infty}/\varphi_\gamma'(x)$, and therefore, $(\alpha_{0,\gamma}/\gamma)\leq \inf\left\{\|c\|_{\infty}/\varphi_\gamma'(x):\varphi_\gamma'(x)>0\right\}$. Now observe that
\begin{align*}
\inf\left\{\|c\|_{\infty}/\varphi_\gamma'(x):\varphi_\gamma'(x)>0\right\}=\frac{\|c\|_{\infty}}{\sup_{\{x:\varphi_\gamma'(x)>0\}}\varphi_\gamma'(x)},
\end{align*}
and because $\varphi_\gamma'$ must be a linear combination of functions of the form $x\mapsto F_{X}(x^{\prime})-1\left[x\preceq x^{\prime}\right]$ or $x\mapsto1\left[x\preceq x^{\prime}\right]-F_{X}(x^{\prime})$, where the coefficients are positive and bounded from above by 1, 
$$\frac{\|c\|_{\infty}}{\sup_{\{x:\varphi_\gamma'(x)>0\}}\varphi_\gamma'(x)}\leq\frac{\|c\|_{\infty}}{1-F_X(\inf\mathcal{X})}.$$
The same line of arguments holds if I also use $\psi_\gamma'$ instead of $\varphi_\gamma'$, so that 
$$\frac{\|c\|_{\infty}}{\sup_{\{x:\varphi_\gamma'(x)>0\}}\varphi_\gamma'(x)}\leq\frac{\|c\|_{\infty}}{\max\left\{1-F_X(\inf\mathcal{X}),1-F_Y(\inf\mathcal{Y})\right\}}$$
must hold. Hence $(\alpha_{0,\gamma}/\gamma)\leq\frac{\|c\|_{\infty}}{\max\left\{1-F_X(\inf\mathcal{X}),1-F_Y(\inf\mathcal{Y})\right\}}$, as desired.
\end{proof}

\subsection{Proposition~\ref{Thm - Computation}}\label{Proof Prop computation}
\begin{proof}
\noindent The proof proceeds by the direct method. 
\par {\bf Part 1}. Let $\xi_{0,\ell}=\alpha_{0,\gamma}\sum_{i=1}^{n_\ell}\mu_{i,0,\ell}\,\delta_{v_i}$ such that $\mu_{i,0,\ell}=\mu_{0,\gamma}\left(E_{i,\ell}\right)$ for each $i=1,\ldots,n_\ell$. By optimality of $\xi_{0,\gamma}$ and $(\alpha^*_\ell,\mu^*_\ell)\in\mathcal{S}_\ell^*$, I have the following inequalities
\begin{align}\label{Proof - Comp -1}
\vartheta^*=\int_{\Omega}e^{\int_{\overline{\mathcal{V}}}v\,d\xi_{0,\gamma}(v)}\,dR_\gamma\leq \vartheta_\ell^*= \int_{\Omega}G_\ell(\omega;\alpha^*_\ell,\mu^*_\ell)\,dR_\gamma  \leq \int_{\Omega}e^{\int_{\overline{\mathcal{V}}}v\,d\xi_{0,\ell}(v)}\,dR_\gamma.
\end{align}
Then, 
\begin{align}
\left|\vartheta^*-\int_{\Omega}e^{\int_{\overline{\mathcal{V}}}v\,d\xi_{0,\ell}(v)}\,dR_\gamma\right| & =\left|\int_{\Omega}e^{\int_{\overline{\mathcal{V}}}v\,d\xi_{0,\gamma}(v)}\,dR_\gamma-\int_{\Omega}e^{\int_{\mathcal{V}}v\,d\xi_{0,\ell}(v)}\,dR_\gamma\right|\label{eq - Optimality Lipschitz 1} \\
& \leq e^{\alpha_{0,\gamma}}\alpha_{0,\gamma}\left\|\int_{\mathcal{V}}v\,d\mu_{0,\gamma}(v)-\int_{\mathcal{V}}v\,d\mu_{0,\ell}(v)\right\|_{L_1(R_\gamma)}\nonumber \\
&\leq\epsilon_\ell \,\gamma\kappa\|c\|_\infty\,e^{\gamma\kappa\|c\|_\infty},\label{eq - Optimality Lipschitz 2}
\end{align}
 by applying the Mean Value Theorem to the discrepancy~(\ref{eq - Optimality Lipschitz 1}) with respect to the exponential function, and then using the result of Proposition~\ref{prop - alpha UB} and the fact that 
$\int_{\mathcal{V}}v\,d\mu_{0,\gamma}(v),\int_{\mathcal{V}}v\,d\mu_{0,\ell}(v)\in[-1,1]$ a.s.$-R_\gamma$, holds, to deduce the upper bound~(\ref{eq - Optimality Lipschitz 2}). This derivation holds for each $\ell$, and hence, \\ $\lim_{\ell\rightarrow\infty}\int_{\Omega}e^{\int_{\mathcal{V}}v\,d\xi_{0,\ell}(v)}\,dR_\gamma=\vartheta^*$, because $\lim_{\ell\rightarrow\infty}\epsilon_\ell=0$. Finally, from the inequalities~(\ref{Proof - Comp -1}), the above limit implies $\lim_{\ell\rightarrow\infty}\vartheta_\ell^*=\vartheta^*$.

\par {\bf Part 2}. By the inequalities~(\ref{Proof - Comp -1}), observe that for each $\ell$,  
\begin{align*}
\left|\log\vartheta^*_\ell-\log\vartheta^*\right|=\log\vartheta^*_\ell-\log\vartheta^*= \log\left(1+\frac{\vartheta^*_\ell-\vartheta^*}{\vartheta^*}\right) & \leq \frac{\vartheta^*_\ell-\vartheta^*}{\vartheta^*}\\
 & \leq \epsilon_\ell\kappa\gamma\|c\|_{\infty}e^{\gamma(\kappa\|c\|_{\infty}+\bar{c})+\log a_\gamma},
\end{align*}
where the second inequality arises from bounding $\vartheta^*$ from below by way of Lemma~\ref{Lemma - LB on Dual value}.
 
\par {\bf Part 3}. The proof follows steps identical to those in Part 2 of Theorem 4 in~\cite{Tabri-MOOR-2025}. I present them here for completeness. Let $\xi^\prime$ be an accumulation point of $\{\xi^*_\ell\}_{\ell\geq1}$ in the weak-star topology of $C\left(\mathcal{V}\right)^*$. Therefore, there exists a subsequence $\{\xi^*_{\ell_h}\}_{h\geq1}$ such that $\xi^*_{\ell_h}\stackrel{w*}{\longrightarrow}\xi^\prime$. By observing that
\begin{align*}
\vartheta^*=\int_{\Omega}e^{\int_{\mathcal{V}}v\,d\xi_{0,\gamma}(v)}\,dR_\gamma\leq \int_{\Omega}e^{\int_{\mathcal{V}}v\,d\xi_{\ell_h}(v)}\,dR_\gamma & =  \int_{\Omega}e^{\alpha_{\ell_h}\sum_{i=1}^{n_{\ell_h}}\mu_{i,\ell_h}\,v_i}\,dR_\gamma \\
& \leq\int_{\Omega}e^{\alpha_{0,\gamma}\sum_{i=1}^{n_{\ell_h}}\mu_{0}(E_{i,{\ell_h}})\,v_i}\,dR_\gamma,
\end{align*}
holds, Part 1 of this proposition implies $\lim_{h\rightarrow\infty}\int_{\Omega}e^{\int_{\mathcal{V}}v\,d\xi_{\ell_{h}}}\,dR_\gamma=\int_{\Omega}e^{\int_{\mathcal{V}}v\,d\xi_{0,\gamma}(v)}\,dR_\gamma$. Now using the fact that the map $\xi\mapsto\int_{\overline{\mathcal{V}}}v\,d\xi$ is continuous when $C\left(\mathcal{V}\right)^*$ is given the weak-star topology and $L_1(R_\gamma)$ has the weak topology, it follows that $\int_{\mathcal{V}}v\,d\xi_{\ell_{h}}\stackrel{w}{\longrightarrow}\int_{\mathcal{V}}v\,d\xi^{\prime}$. Applying the Skorohod Representation Theorem, there exists a probability space $(\Omega^\prime,\mathcal{F}^\prime,Q^\prime)$ and real-valued measurable functions $\{z_h\}_{h\geq 1}$ and $z$ on this probability space, such that
\begin{enumerate}[(a)]
\item $\{z_h\}_{h\geq1}$ and $z$ have the same probability distributions as $\{e^{\int_{\mathcal{V}}v\,d\xi_{\ell_h}}\}_{h\geq 1}$ and $e^{\int_{\mathcal{V}}v\,d\xi^\prime}$, respectively, and
\item $\lim_{h\rightarrow\infty}z_h(\omega^\prime)=z(\omega^\prime)$ a.s.$-Q^\prime$.
\end{enumerate}
One can set $\Omega^\prime=[0,1]$ with $\{z_h\}_{h\geq1}$ and $z$ being the quantile functions of $\{e^{\int_{\mathcal{V}}v\,d\xi_{\ell_h}}\}_{h\geq 1}$ and $e^{\int_{\mathcal{V}}v\,d\xi^\prime}$, respectively. Consequently, $z_h(\omega^{\prime}),z(\omega^{\prime})\geq 0$ for all $\omega^{\prime}\in [0,1]$. Now we can use Fatou's Lemma to deduce the desired result:
\begin{align*}
\int_{\Omega}e^{\int_{\mathcal{V}}v\,d\xi^\prime}\,dR_\gamma=\int_{[0,1]}z\,dQ^{\prime}\leq\liminf_{h\rightarrow\infty}\int_{[0,1]}z_h\,dQ^{\prime} & =\liminf_{h\rightarrow\infty}\int_{\Omega}e^{\int_{\mathcal{V}}v\,d\xi_{\ell_h}}\,dR_\gamma \\
& =\int_{\Omega}e^{\int_{\mathcal{V}}v\,d\xi_{0,\gamma}(v)}\,dR_\gamma.
\end{align*} 
Therefore, $\xi^{\prime}\in\arginf\left\{\int_{\Omega}e^{\int_{\mathcal{V}}v\,d\xi(v)}\,dR_\gamma:\xi\in\Upsilon\right\}$ since $\xi_{0,\gamma}$ is an element of that set. This concludes the proof.
\end{proof}

\subsection{Proof of Theorem~\ref{Thm Stochastic Opt Sieve}}\label{Proof - Thm Stoch Opt Sieve}
\begin{proof}
The proof proceeds by the direct method. I firstly introduce some notation and set the stage to simplify the exposition. Recall that $\xi_{0,\gamma}=\alpha_{0,\gamma}\cdot\mu_{0,\gamma}\in\mathcal{S}^*$. Given $\ell$, and hence, $\epsilon_\ell$, there corresponds a finite partition $\{E_{i,\ell}\}_{i=1}^{n_\ell}$ of $\mathcal{V}$ with the accuracy~(\ref{eq - Disc Accuracy}), and let $v_i\in E_{i,\ell}$ for $i=1,\ldots, n_\ell$. Define $\xi_{0,\ell}=\alpha_{0,\gamma}\sum_{i=1}^{n_\ell}\mu_{i,0,\ell}\delta_{v_i}$, where $\mu_{i,0,\ell}=\mu_{0,\gamma}\left(E_{i,\ell}\right)$ and $\delta_{v_i}$ is the Dirac delta function at $v_i$, for each $i=1,\ldots, n_\ell$. Now based on this partition of $\mathcal{V}$, let $(\alpha^*_\ell,\mu^*_\ell)\in\mathcal{S}_\ell^*$, $(\alpha_\ell,\mu_\ell)\in S_\ell$ and $(\hat{\alpha}_\ell,\hat{\mu}_\ell)\in\hat{S}_\ell$. 

\par Note that for each $\ell$, $\hat{S}_\ell\neq\emptyset$ for almost every sample realization of $\omega^N_\ell$ under $R_\gamma$. The realizations of the sample $\omega^{N_\ell}$ where $\hat{S}_\ell=\emptyset$ arises is on a set of probability measure zero. This is because the Extreme Value Theorem applies for each realization of $\omega^{N_\ell}$, as $\overline{\mathcal{C}}_\ell$ is a nonempty compact subset of $\mathbb{R}^{n_\ell+1}$ and the objective function is a Carath\'{e}odory function -- see Remark~\ref{Remark - Measurability SAA}.

\par {\bf  Part 1(i)}. Following the proof of Part (i) in Proposition~\ref{Thm - Computation}, using the optimality of $\xi_{0,\gamma}$ and $(\alpha^*_\ell,\mu^*_\ell)$, I have the inequalities
\begin{align}\label{Proof - Comp a.s -1}
\vartheta^*=\int_{\Omega}e^{\int_{\mathcal{V}}v\,d\xi_0(v)}\,dR_\gamma\leq \vartheta^*_\ell\leq \vartheta_\ell\leq \int_{\Omega}e^{\int_{\mathcal{V}}v\,d\xi_{0,\ell}(v)}\,dR_\gamma.
\end{align}
Because $\lim_{\ell\rightarrow\infty}\int_{\Omega}e^{\int_{\mathcal{V}}v\,d\xi_{0,\ell}(v)}\,dR_\gamma=\vartheta^*$ holds by Part 1 in Proposition~\ref{Thm - Computation}, by a sandwiching argument with the inequalities~(\ref{Proof - Comp a.s -1}), it follows that  $\lim_{\ell\rightarrow\infty}\vartheta_\ell=\lim_{\ell\rightarrow\infty}\vartheta^*_\ell=\vartheta^*$, must also hold. Now along realizations $\omega^{N_\ell}$ such that $\hat{S}_\ell\neq\emptyset$,
I can add and subtract $\hat{\vartheta}_\ell$ and $\vartheta_\ell$ in the center of the chain of inequalities~(\ref{Proof - Comp a.s -1})
to obtain
\begin{align*}
\int_{\Omega}e^{\int_{\mathcal{V}}v\,d\xi_0(v)}\,dR_\gamma & \leq\vartheta^*_\ell-\vartheta_\ell+\vartheta_\ell-\hat{\vartheta}_\ell+\hat{\vartheta}_\ell\leq \int_{\Omega}e^{\int_{\mathcal{V}}v\,d\xi_{0,\ell}(v)}\,dR_\gamma.
\end{align*}
Hence, to obtain the desired result, I must establish that $\vartheta_\ell-\hat{\vartheta}_\ell=o_{a.s}(1)$, holds.

\par Toward that end, note that $\vartheta_\ell-\hat{\vartheta}_\ell$ is given by
\begin{align}\label{Proof - Comp 0}
\int_{\Omega}G_\ell(\omega;\alpha_\ell,\mu_\ell)\,dR_\gamma-\frac{1}{N_\ell}\sum_{j=1}G_\ell(\omega_j;\hat{\alpha}_\ell,\hat{\mu}_\ell),
\end{align}
I shall bound this term from above and from below by $o_{a.s}(1)$ random variables. The upper bound is
\begin{align}\label{Proof - Comp -2}
\int_{\Omega}G_\ell(\omega;\hat{\alpha}_\ell,\hat{\mu}_\ell)\,dR_\gamma-\frac{1}{N_\ell}\sum_{j=1}G_\ell(\omega_j;\hat{\alpha}_\ell,\hat{\mu}_\ell),
\end{align}
as $\int_{\Omega}G_\ell(\omega;\hat{\alpha}_\ell,\hat{\mu}_\ell)\,dR_\gamma\geq \int_{\Omega}G_\ell(\omega;\alpha_\ell,\mu_\ell)\,dR_\gamma$. Now the absolute value of the term in~(\ref{Proof - Comp -2}) is bounded from above by
\begin{align}\label{Proof - Comp -3}
\sup_{(\alpha,\mu)\in \overline{\mathcal{C}}_{\ell}}\left|\int_{\Omega}G_\ell(\omega;\alpha,\mu)\,dR_\gamma-\frac{1}{N_\ell}\sum_{j=1}G_\ell(\omega_j;\alpha,\mu)\right|,
\end{align}
From Lemma~\ref{Lemma - Rademacher complexity G}, and Corollary~\ref{Corollary - Rademacher complexity}, the Rademacher complexity of $\mathcal{G}_\ell$, $\mathcal{R}_{N_\ell}(\mathcal G_\ell)$, satisfies
$\mathcal{R}_{N_\ell}(\mathcal G_\ell)\leq \gamma\kappa\|c\|_{\infty}\,e^{\gamma\kappa\|c\|_{\infty}}\sqrt{\frac{2\log n_\ell}{N_\ell}}\quad \forall \ell,$ and hence, it vanishes under the limit condition~(\ref{eq - entropy bound}). Therefore, by an application of by Proposition~\ref{Prop - ULLN}, the term~(\ref{Proof - Comp -3}) must be $o_{a.s}(1)$. For the lower bound, observe that the term in~(\ref{Proof - Comp 0}) is bounded from below by
\begin{align}\label{Proof - Comp -4}
\int_{\Omega}G_\ell(\omega;\alpha_\ell,\mu_\ell)\,dR_\gamma-\frac{1}{N_\ell}\sum_{j=1}G_\ell(\omega_j;\alpha_\ell,\mu_\ell)
\end{align}
as $\frac{1}{N_\ell}\sum_{j=1}G_\ell(\omega_j;\alpha_\ell,\mu_\ell)\geq \frac{1}{N_\ell}\sum_{j=1}G_\ell(\omega_j;\hat{\alpha}_\ell,\hat{\mu}_\ell)$. Now the absolute value of the term in~(\ref{Proof - Comp -4}) is bounded from above by the term in~(\ref{Proof - Comp -3}), so that it is also $o_{a.s}(1)$ by the same arguments. In consequence,
$\vartheta^*=\int_{\Omega}e^{\int_{\overline{\mathcal{V}}}v\,d\xi_0(v)}\,dR_\gamma \leq o(1)+o_{a.s}(1)+\hat{\vartheta}_\ell\leq \int_{\Omega}e^{\int_{\overline{\mathcal{V}}}v\,d\xi_{0,\ell}(v)}\,dR_\gamma$,
 and taking the limit as $\ell\rightarrow\infty$ on all sides of these inequalities yields $\lim_{\ell\rightarrow\infty}\hat{\vartheta}_\ell=\vartheta^*$ a.s.$-R_\gamma$.

\par {\bf Part 1(ii)}. Suppose that $\{\hat{\xi}_{\ell_h}\}_{h\geq 1}$ converges to $\xi^\prime$ in the weak-star topology of $C(\mathcal{V})^*$ as $h\rightarrow\infty$, with probability 1. As the sequence $\{\hat{\vartheta}_{\ell_h}\}_{h\geq 1}$ is a subsequence of $\{\hat{\vartheta}_\ell\}_{\ell\geq 1}$, the result of Part 1(i) of this theorem implies $\lim_{h\rightarrow\infty}\hat{\vartheta}_{\ell_h}=\vartheta^*$ a.s.$-R_\gamma$. In consequence, I must establish the limit
$\lim_{h\rightarrow\infty}\hat{\vartheta}_{\ell_h}=\int_{\Omega}e^{\int_{\overline{V}}v\,d\xi^\prime}\,dR_\gamma$ with probability 1, to obtain the desired result, as it implies $\int_{\Omega}e^{\int_{V}v\,d\xi^\prime}\,dR_\gamma=\vartheta^*$, holds, and hence, $\xi^\prime\in\mathcal{S}^*$.

\par By using the fact that the map $\xi\mapsto\int_{\overline{\mathcal{V}}}v\,d\xi$ is continuous when $C\left(\mathcal{V}\right)^*$ is given the weak-star topology and $L_1(R_\gamma)$ has the weak topology, it follows that $\left\{\int_{\mathcal{V}}v\,d\hat{\xi}_{\ell_{h}}\right\}_{h\geq1}$ converges to $\int_{\mathcal{V}}v\,d\xi^{\prime}$ in the weak topology of $L_1(R_\gamma)$, with probability 1. I can apply the Skorohod Representation Theorem as follows: for almost every realization, there exists a probability space $(\Omega^\prime,\mathcal{F}^\prime,Q^\prime)$ and real-valued measurable functions $\{z_h\}_{h\geq 1}$ and $z$ on this probability space, such that
\begin{enumerate}[(a)]
\item $\{z_h\}_{h\geq1}$ and $z$ have the same probability distributions as $\{e^{\int_{\mathcal{V}}v\,d\hat{\xi}_{\ell_h}}\}_{h\geq 1}$ and $e^{\int_{\mathcal{V}}v\,d\xi^\prime}$, respectively, and
\item $\lim_{h\rightarrow\infty}z_\ell(\omega^\prime)=z(\omega^\prime)$ a.s.$-Q^\prime$.
\end{enumerate}
One can set $\Omega^\prime=[0,1]$ with $\{z_h\}_{h\geq1}$ and $z$ being the quantile functions of $\{e^{\int_{\mathcal{V}}v\,d\hat{\xi}_{\ell_h}}\}_{h\geq 1}$ and $e^{\int_{\mathcal{V}}v\,d\xi^\prime}$, respectively. Consequently, $z_h(\omega^{\prime}),z(\omega^{\prime})\geq 0$ for all $\omega^{\prime}\in [0,1]$. Now applying Fatou's lemma on a per realization basis, for almost every realization,  
\begin{align*}
\int_{\Omega}e^{\int_{\mathcal{V}}v\,d\xi^\prime}\,dR_\gamma=\int_{[0,1]}z\,dQ^{\prime}\leq\liminf_{h\rightarrow\infty}\int_{[0,1]}z_h\,dQ^{\prime} & =\liminf_{h\rightarrow\infty}\int_{\Omega}e^{\int_{\overline{\mathcal{V}}}v\,d\hat{\xi}_{\ell_h}}\,dR_\gamma \\
& =\int_{\Omega}e^{\int_{\mathcal{V}}v\,d\xi_{0,\gamma}(v)}\,dR_\gamma.
\end{align*}
Therefore, $\xi^{\prime}\in\arginf\left\{\int_{\Omega}e^{\int_{\mathcal{V}}v\,d\xi(v)}\,dR_\gamma:\xi\in\Upsilon\right\}$ because $\xi_{0,\gamma}$ is an element of that set. This concludes the proof.

\par {\bf Part 2}. By the triangle inequality, $\left|\log\hat{\vartheta}_\ell -\log\vartheta^*\right|$ is bounded from above by
\begin{align*}
\left|\log\hat{\vartheta}_\ell-\log\vartheta_\ell\right|+\left|\log\vartheta_\ell-\log\vartheta^*_\ell\right|+\left|\log\vartheta^*_\ell-\log\vartheta^*\right|.
\end{align*}
The last two terms in the above display do not depend on the sample, and hence, are non-stochastic. I firstly focus on these non-stochastic terms. By Part 2 of Proposition~\ref{Thm - Computation}, $$\left|\log\vartheta^*_\ell-\log\vartheta^*\right|=\log\vartheta^*_\ell-\log\vartheta^*\leq \epsilon_\ell\gamma\kappa\|c\|_{\infty}e^{\gamma(\kappa\|c\|_{\infty}+\bar{c})+\log a_\gamma}.$$
Furthermore,$\left|\log\vartheta_\ell-\log\vartheta^*_\ell\right|= \log\vartheta^*_\ell-\log\vartheta_\ell=\log\left(1+\frac{\vartheta^*_\ell-\vartheta_\ell}{\vartheta_\ell}\right)\leq \frac{\vartheta^*_\ell-\vartheta_\ell}{\vartheta_\ell}\leq \frac{\vartheta^*_\ell-\vartheta^*}{\vartheta^*},$
because $\vartheta^*\leq \vartheta_\ell$ for each $\ell$, and hence, by Part 2 of Proposition~\ref{Thm - Computation} $\left|\log\vartheta_\ell-\log\vartheta^*_\ell\right|\leq \epsilon_\ell\gamma\kappa\|c\|_{\infty}e^{\gamma(\kappa\|c\|_{\infty}+\bar{c})+\log a_\gamma}$, as well.

\par I turn my focus now to bounding $E_{R_\gamma}^{\otimes_{N_\ell}}\left[\left|\log\hat{\vartheta}_\ell-\log\vartheta_\ell\right|\right]$. Firstly, observe that by a first-order Taylor expansion of the logarithm function, $\left|\log\hat{\vartheta}_\ell-\log\vartheta_\ell\right| = \frac{|\hat{\vartheta}_\ell-\vartheta_\ell|}{\lambda\hat{\vartheta}_\ell+(1-\lambda)\vartheta_\ell}$,
where $\lambda\in[0,1]$. As $(\alpha,\mu)\in\overline{\mathcal{C}_\ell}$ and $v\in[-1,1]$ for each sample and $\ell$, it follows that $\hat{\vartheta}_\ell\geq e^{-\gamma\kappa\|c\|_{\infty}}$ for each sample and $\ell$. Furthermore, $\vartheta_\ell \geq \vartheta^*$ for each $\ell$ by the inequality in~(\ref{Proof - Comp a.s -1}), and  $\vartheta^*\geq e^{-(\gamma \bar{c}+\log a_\gamma)}$ by Lemma~\ref{Lemma - LB on Dual value}. As $\kappa\|c\|_{\infty}\geq\bar{c}\geq  \bar{c}+\gamma^{-1}\log a_\gamma$, $\left|\log\hat{\vartheta}_\ell-\log\vartheta_\ell\right| \leq  e^{\gamma\kappa\|c\|_{\infty}}|\hat{\vartheta}_\ell-\vartheta_\ell|$
holds for each sample and $\ell$, implying $E_{R_\gamma}^{\otimes_{N_\ell}}\left[\left|\log\hat{\vartheta}_\ell-\log\vartheta_\ell\right|\right]\leq e^{\gamma\kappa\|c\|_{\infty}}E_{R_\gamma}^{\otimes_{N_\ell}}\left[|\hat{\vartheta}_\ell-\vartheta_\ell|\right]$. Now, 
\begin{align*}
e^{\gamma\kappa\|c\|_{\infty}}E_{R_\gamma}^{\otimes_{N_\ell}}\left[|\hat{\vartheta}_\ell-\vartheta_\ell|\right]& \leq e^{\gamma\kappa\|c\|_{\infty}}E_{R_\gamma^{\otimes_{N_\ell}}}\left[\sup_{(\alpha,\mu)\in \overline{\mathcal{C}}_\ell}\left|\frac{1}{N_\ell}\sum_{j=1}G_\ell(\omega_j,\alpha,\mu)-\int_{\Omega}G_\ell(\omega,\alpha,\mu)\right|\right]\\
& \leq e^{\gamma\kappa\|c\|_{\infty}}2\mathcal{R}_{N_\ell}(\mathcal{G}_\ell)\leq 2\gamma\kappa\|c\|_{\infty}\sqrt{\frac{2\log n_\ell}{N_\ell}}\,e^{2\gamma\kappa\|c\|_{\infty}},
\end{align*}
with the second and third inequalities, holding, by applications of Lemma~ 2.3.1 in~\cite{VDV-W} (i.e. the Symmetrization Lemma), and Lemma~\ref{Lemma - Rademacher complexity G} and Corollary~\ref{Corollary - Rademacher complexity}, respectively. 
Therefore, $\gamma^{-1}E_{R_\gamma}^{\otimes_{N_\ell}}\left[\left|\log\hat{\vartheta}_\ell-\log\vartheta_\ell\right|\right]\leq 2\kappa\|c\|_{\infty}\sqrt{\frac{2\log n_\ell}{N_\ell}}\,e^{2\gamma\kappa\|c\|_{\infty}}$.
Finally, combining these bounds from the triangular inequality, I obtain
\begin{align*}
\gamma^{-1}E_{R_\gamma}^{\otimes_{N_\ell}}\left[\left|\log\hat{\vartheta}_\ell -\log\vartheta^*\right|\right]& \leq 2\kappa\|c\|_{\infty}\sqrt{\frac{2\log n_\ell}{N_\ell}}\,e^{2\gamma\kappa\|c\|_{\infty}}+2\epsilon_\ell\gamma\kappa\|c\|_{\infty}e^{\gamma(\kappa\|c\|_{\infty}+\bar{c})+\log a_\gamma}\\
&\leq  2\kappa\|c\|_{\infty}\sqrt{\frac{2\log n_\ell}{N_\ell}}\,e^{2\gamma\kappa\|c\|_{\infty}}+2\epsilon_\ell\gamma\kappa\|c\|_{\infty}e^{\gamma2\kappa\|c\|_{\infty}}\\
&\leq \max\left\{\sqrt{\frac{2\log n_\ell}{N_\ell}},\epsilon_\ell\,\right\}2\kappa\|c\|_{\infty}e^{\gamma2\kappa\|c\|_{\infty}}.
\end{align*}

\par {\bf Part 3}. Starting with the lower bound, observe that 
\begin{align*}
\sqrt{N_\ell}(\hat{\vartheta}_\ell-\vartheta^*)=\sqrt{N_\ell}(\hat{\vartheta}_\ell-\vartheta_\ell+\vartheta_\ell-\vartheta^*)\leq \sqrt{N_\ell}(\hat{\vartheta}_\ell-\vartheta_\ell)+\sqrt{N_\ell}\epsilon_\ell\,\gamma\kappa\|c\|_{\infty}e^{\gamma\kappa\|c\|_{\infty}}
\end{align*}
holds by Part 2 of Proposition~\ref{Thm - Computation}. I can re-write this inequality as
\begin{align}
\sqrt{N_\ell}\vartheta^* & \geq \sqrt{N_\ell}\hat{\vartheta}_\ell + \sqrt{N_\ell}(\vartheta_\ell-\hat{\vartheta}_\ell)-\sqrt{N_\ell}\epsilon_\ell\,\gamma\kappa\|c\|_{\infty}e^{\gamma\kappa\|c\|_{\infty}}\nonumber \\
 &= \sqrt{N_\ell}\hat{\vartheta}_\ell-\sqrt{N_\ell}\epsilon_\ell\,\gamma\kappa\|c\|_{\infty}e^{\kappa\gamma\|c\|_{\infty}}\nonumber \\ & \qquad+\sqrt{N_\ell}\left(\inf_{(\alpha,\mu)\in\overline{C}_\ell}E_{R_\gamma}\left[G_\ell(\alpha,\mu,\omega)\right]-\inf_{(\alpha,\mu)\in\overline{C}_\ell}\frac{1}{N_\ell}\sum_{j=1}^{N_\ell}G_\ell(\alpha,\mu,\omega_j)\right)\nonumber\\
 & \geq \sqrt{N_\ell}\hat{\vartheta}_\ell-\sqrt{N_\ell}\epsilon_\ell\,\gamma\kappa\|c\|_{\infty}e^{\kappa\gamma\|c\|_{\infty}}\nonumber\\
 &\qquad+\sqrt{N_\ell}\inf_{(\alpha,\mu)\in\overline{C}_\ell}\left(E_{R_\gamma}\left[G_\ell(\alpha,\mu,\omega)\right]-\frac{1}{N_\ell}\sum_{j=1}^{N_\ell}G_\ell(\alpha,\mu,\omega_j)\right),\label{Proof - Thm comp CI -1}
\end{align}
where the last inequality follows from the property of the infimum.  Observe that the term~(\ref{Proof - Thm comp CI -1}) can be expressed as
 $-\sup_{g\in\mathcal{G}_\ell}\frac{1}{\sqrt{N_\ell}}\sum_{j=1}^{N_\ell}\left(g(\omega_j)-E_{R_\gamma}\left[g(\omega)\right]\right).$ Now let $Z_\ell=\sup_{g\in\mathcal{G}_\ell}\frac{1}{\sqrt{N_\ell}}\sum_{j=1}^{N_\ell}\left(g(\omega_j)-E_{R_\gamma}\left[g(\omega)\right]\right)$ for each $\ell$, and by Lemma~\ref{lem:gauss-approx-inf} there exists
 $\tilde{Z}_\ell=\sup_{g\in\mathcal{G}_\ell}B^{l}_\ell g$, where $B^{l}_{\ell}$ is a centered Gaussian process indexed by $\mathcal{G}_\ell$ with covariance function $E_{R_\gamma}\left[B^{l}_{\ell}(g)\,B^{l}_{\ell}(g^\prime)\right]=\int_{\Omega}g(\omega)\,g^\prime(\omega)\,d R_\gamma$, for $g,g^\prime\in\mathcal{G}_\ell$, such that $|Z_\ell-\tilde{Z}_\ell|= O_{R_\gamma}\left(r_\ell\right)$ as $\ell\rightarrow\infty$ with $r_\ell =C\Big[
b K_{N_\ell}\, {N_\ell}^{-1/2}
\;+\; (b\sigma)^{1/2} K_{N_\ell}^{3/4} {N_\ell}^{-1/4}
\;+\; (b \sigma^2 K_{N_\ell}^2)^{1/3} {N_\ell}^{-1/6}
\Big],$
where $K_{N_\ell}= c\,v\max\Big(\log N_\ell, \log\Big(\frac{A b}{\sigma}\Big)\Big)$, with $C,c>0$ are constants \emph{not} depending on $N_\ell$, 
$b=\sigma=e^{\gamma\kappa\|c\|_{\infty}},$ $v=n_\ell+2$, and $A=\left(K(n_\ell+2)\right)^{\frac{1}{{n_\ell+2}}}16e$ with $K$ a universal constant.
  
In consequence, for each $\ell$ we have that $\sqrt{N_\ell}\vartheta^* \geq  \sqrt{N_\ell}\hat{\vartheta}_\ell -\sqrt{N_\ell}\epsilon_\ell\,\gamma\kappa\|c\|_{\infty}e^{\kappa\gamma\|c\|_{\infty}}-Z_\ell+\tilde{Z}_\ell-\tilde{Z}_\ell$, holds; hence,
\begin{align*}
\vartheta^*\geq\hat{\vartheta}_\ell-\epsilon_\ell\,\gamma\kappa\|c\|_{\infty}e^{\kappa\gamma\|c\|_{\infty}}+ O_{R_\gamma}\left(r_\ell/\sqrt{N_\ell}\right)-\tilde{Z}_\ell/\sqrt{N_\ell}\quad \text{as $\ell\rightarrow\infty$}.
\end{align*}
where it is $|\tilde{Z}_\ell-Z_\ell|=O_{R_\gamma}\left(r_\ell/\sqrt{N_\ell}\right)$ as $\ell\rightarrow\infty$. Finally, because $r_\ell\downarrow0$ as $\ell\rightarrow\infty$, we must have $|\tilde{Z}_\ell-Z_\ell|=o_{R_\gamma}\left(1/\sqrt{N_\ell}\right)$ as $\ell\rightarrow\infty$.

\par Now I turn my focus to the upper bound. Starting with $$\sqrt{N_\ell}(\hat{\vartheta}_\ell-\vartheta_\ell)=\sqrt{N_\ell}\left(\inf_{(\alpha,\mu)\in\overline{C}_\ell}\frac{1}{N_\ell}\sum_{j=1}^{N_\ell}G_\ell(\alpha,\mu,\omega_j)-\inf_{(\alpha,\mu)\in\overline{C}_\ell}E_{R_\gamma}\left[G_\ell(\alpha,\mu,\omega)\right]\right),$$
add and subtract $E_{R_\gamma}\left[G_\ell(\alpha,\mu,\omega)\right]$ under the sum in the right side of the above equality and using the lower bound property of the infimum of the sum, I obtain
\begin{align*}
\sqrt{N_\ell}(\hat{\vartheta}_\ell-\vartheta_\ell) & \geq \sqrt{N_\ell}\inf_{(\alpha,\mu)\in\overline{C}_\ell}\frac{1}{N_\ell}\sum_{j=1}^{N_\ell}\left(G_\ell(\alpha,\mu,\omega_j)-E_{R_\gamma}\left[G_\ell(\alpha,\mu,\omega)\right]\right)\\
& \qquad +\sqrt{N_\ell}\inf_{(\alpha,\mu)\in\overline{C}_\ell}E_{R_\gamma}\left[G_\ell(\alpha,\mu,\omega)\right]-\sqrt{N_\ell}\inf_{(\alpha,\mu)\in\overline{C}_\ell}E_{R_\gamma}\left[G_\ell(\alpha,\mu,\omega)\right]\\
 & = \sqrt{N_\ell}\inf_{(\alpha,\mu)\in\overline{C}_\ell}\frac{1}{N_\ell}\sum_{j=1}^{N_\ell}\left(G_\ell(\alpha,\mu,\omega_j)-E_{R_\gamma}\left[G_\ell(\alpha,\mu,\omega)\right]\right).
\end{align*}
Furthermore, $\vartheta^*\leq \vartheta_\ell$ for each $\ell$, and hence, 
\begin{align}\label{Proof - Thm comp CI 0}
\sqrt{N_\ell}(\hat{\vartheta}_\ell-\vartheta^*) & \geq \sqrt{N_\ell}\inf_{(\alpha,\mu)\in\overline{C}_\ell}\frac{1}{N_\ell}\sum_{j=1}^{N_\ell}\left(G_\ell(\alpha,\mu,\omega_j)-E_{R_\gamma}\left[G_\ell(\alpha,\mu,\omega)\right]\right).
\end{align}

\par The right side of~(\ref{Proof - Thm comp CI 0}) is equivalent to $-\sup_{g\in\mathcal{G}^{-}_\ell}\frac{1}{\sqrt{N_\ell}}\sum_{j=1}^{N_\ell}\left(g(\omega_j)-E_{R_\gamma}\left[g(\omega)\right]\right),$
where $\mathcal{G}^{-}_\ell=-\mathcal{G}_\ell$. Now let $Z_\ell=\sup_{g\in\mathcal{G}^{-}_\ell}\frac{1}{\sqrt{N_\ell}}\sum_{j=1}^{N_\ell}\left(g(\omega_j)-E_{R_\gamma}\left[g(\omega)\right]\right)$ for each $\ell$. Because $\mathcal{G}^{-}_\ell$ must also be VC-subgraph by Part (iv) of Lemma 2.6.18 in~\cite{VDV-W}, and that it must also be pointwise measurable, and has the same envelope function as that of $\mathcal{G}_\ell$, I can apply Lemma~\ref{lem:gauss-approx-inf} to deduce the existence of $\tilde{Z}_\ell=\sup_{g\in\mathcal{G}^{-}_\ell}B^{u}_\ell g$, where $B^{u}_{\ell}$ is a centered Gaussian process indexed by $\mathcal{G}^{-}_\ell$ with covariance function $E_{R_\gamma}\left[B^{u}_{\ell}(g)\,B^{u}_{\ell}(g^\prime)\right]=\int_{\Omega}g(\omega)\,g^\prime(\omega)\,d R_\gamma$, for $g,g^\prime\in\mathcal{G}^{-}_\ell$, such that $|Z_\ell-\tilde{Z}_\ell|= O_{R_\gamma}\left(r_\ell\right)$ as $\ell\rightarrow\infty$ with $r_\ell$ as with the lower bound. 

\par In consequence, $\sqrt{N_\ell}(\hat{\vartheta}_\ell-\vartheta^*)  \geq (\tilde{Z}_\ell-Z_\ell)-\tilde{Z}_\ell\implies \vartheta^*\leq |\tilde{Z}_\ell-Z_\ell|/\sqrt{N_\ell}+\tilde{Z}_\ell/\sqrt{N_\ell}+\hat{\vartheta}_\ell$. Now noting that $|\tilde{Z}_\ell-Z_\ell|/\sqrt{N_\ell}=O_{R_\gamma}\left(r_\ell/\sqrt{N_\ell}\right)$ as $\ell\rightarrow\infty$ and because $r_\ell\downarrow0$ as $\ell\rightarrow\infty$, this term is in fact $o_{R_\gamma}\left(1/\sqrt{N_\ell}\right)$ as $\ell\rightarrow\infty$.
\end{proof}

\section{Technical Lemmas for Proposition~\ref{Thm - Fenchel}}\label{Section - Propo-Fenchel}
\begin{lemma}\label{Lemma - H-L}
Suppose the conditions of Proposition~\ref{Thm - Fenchel}, hold. Furthermore, for each $n\in\mathbb{Z}_+$, let $z_{0,n}=\arginf\left\{ \int_{\Omega}e^{z}\,dR_\gamma : y\in\mathcal{D}_n\right\}$ where $\mathcal{D}_n=\left\{y\in\mathcal{D}:\alpha\leq \bar{\alpha}_n\right\}$, with $\bar{\alpha}_n\nearrow\infty$ as $n\rightarrow\infty$. Then $$\text{OL}\left\{\{z_{0,n}\}_{n\geq1}\right\}\subset \arginf\left\{\int_{\Omega}e^{z}\,dR_\gamma: z\in \mathcal{D}\right\}.$$
\end{lemma}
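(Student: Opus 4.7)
The plan is to verify the three hypotheses required by Theorem~2.3 of~\citet{Mena-Lerma-2005} and then quote that theorem directly. Specialized to the present setup, those hypotheses are: (i) $\{\mathcal{D}_n\}_{n\geq 1}$ is a nested sequence of subsets of $\mathcal{D}$ whose union exhausts $\mathcal{D}$; (ii) the objective $F(z):=\int_\Omega e^z\,dR_\gamma$ is convex and lower semicontinuous on $L_1(R_\gamma)$ in the norm topology; and (iii) $\mathcal{D}$ is norm-closed in $L_1(R_\gamma)$.

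For (i), nesting is immediate from $\bar{\alpha}_n\nearrow\infty$, and exhaustion follows because any $z\in\mathcal{D}$ admits a representation $z=\alpha v$ with $v\in\overline{\text{co}}(\mathcal{V})$ and $\alpha\geq 0$ finite (by~\eqref{eq - domain}), hence $z\in\mathcal{D}_n$ once $\bar{\alpha}_n\geq\alpha$. For (ii), convexity of $F$ is inherited from convexity of $\exp$; for lower semicontinuity, given $y_k\to y$ in $L_1(R_\gamma)$ I would extract an $R_\gamma$-a.s.\ convergent subsequence and apply Fatou's lemma to the nonnegative integrands $e^{y_{k_j}}$. For (iii), write $\mathcal{D}=\mathcal{M}^\oplus\cap\mathcal{C}$, where $\mathcal{C}$ is the positive cone generated by $\overline{\text{co}}(\mathcal{V})$; the set $\mathcal{M}^\oplus$ is an intersection of norm-continuous half-spaces and is therefore closed, and I would close the cone $\mathcal{C}$ by exploiting both the norm-closedness of $\overline{\text{co}}(\mathcal{V})$ and its uniform boundedness by the envelope~$1$ of $\mathcal{V}$.

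Granting (i)--(iii), the conclusion follows in standard $\Gamma$-convergence fashion. For $z^{*}\in\text{OL}\{z_{0,n}\}$, choose a subsequence $z_{0,n_k}\to z^{*}$ in $L_1(R_\gamma)$; then $z^{*}\in\mathcal{D}$ by closedness. For any $z\in\mathcal{D}$, exhaustion places $z\in\mathcal{D}_{n_k}$ for all large $k$, and optimality of $z_{0,n_k}$ over $\mathcal{D}_{n_k}$ yields $F(z_{0,n_k})\leq F(z)$. Passing to the limit inferior in $k$ and invoking the lower semicontinuity of $F$ gives $F(z^{*})\leq F(z)$; since $z\in\mathcal{D}$ was arbitrary, $z^{*}\in\arginf\{F(z):z\in\mathcal{D}\}$, which is the desired inclusion.

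The step I expect to require the most care is (iii), specifically the closedness of the positive cone $\mathcal{C}=\mathbb{R}_+\cdot\overline{\text{co}}(\mathcal{V})$. In infinite dimensions, positive linear spans of norm-closed bounded sets are not automatically closed. Here the argument should rely on the fact that elements of $\overline{\text{co}}(\mathcal{V})$ are pointwise bounded by the envelope~$1$: if $\alpha_k v_k\to z^{*}$ in $L_1(R_\gamma)$ with $v_k\in\overline{\text{co}}(\mathcal{V})$, either $\{\alpha_k\}$ admits a bounded subsequence---reducing to a compactness-type argument on $\overline{\text{co}}(\mathcal{V})$, which is precompact via Lemma~\ref{Lemma - Precomp}---or $\alpha_k\to\infty$, which forces $v_k\to 0$ in $L_1(R_\gamma)$ and thus $z^{*}=0\in\mathcal{C}$. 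Making this dichotomy fully rigorous, and in particular identifying the limit of the bounded case with a legitimate element $\alpha v\in\mathcal{C}$, is the technical crux of the proof.
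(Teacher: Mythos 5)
Your high-level plan---verify the hypotheses of Theorem~2.3 in Mena-Lerma (2005) and invoke it---is the same route the paper takes, but the conditions you write down are not quite the conditions the paper actually checks. The paper verifies Assumption~2.1(a)--(c) of that reference, which is a $\Gamma$-convergence--type triple tied to the specific sequence $\{z_{0,n}\}$: (a) every norm limit point of $\{z_{0,n}\}$ lies in $\mathcal{D}$, (b) a liminf inequality along convergent subsequences of $\{z_{0,n}\}$, and (c) a recovery-sequence condition for each $z\in\mathcal{D}$. Your conditions (i) nesting/exhaustion and (ii) convexity/lower semicontinuity are sound and would deliver (c) and (b) respectively. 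The real divergence is your (iii): you propose to prove that $\mathcal{D}$ is norm-closed and then obtain (a) for free, whereas the paper never claims $\mathcal{D}$ is closed. Instead, it works with the sequence directly: it argues that the scale factors $\alpha_{0,n_k}$ in the decomposition $z_{0,n_k}=\alpha_{0,n_k}z'_{0,n_k}$ stay bounded along any $L_1$-convergent subsequence, so that $\{z_{0,n_k}\}$ eventually sits in a single \emph{truncated} set $\mathcal{D}_{L'}$, which is closed; the boundedness itself relies on the companion Lemma~\ref{Lemma - Fenchel Dual existence of cluster point}, which rules out $\alpha_{0,n}\to\infty$ via the optimality conditions and a quantitative lower bound of the form $R_\gamma(z'_\infty>0)\geq 1-\max_i\gamma_i>0$ for limits $z'_\infty\in\overline{\text{co}}(\mathcal V)$.

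This matters because your argument for (iii) has a genuine gap exactly where you flagged the difficulty. In the branch $\alpha_k\to\infty$ you correctly deduce $v_k\to 0$ in $L_1(R_\gamma)$ (since $\alpha_k v_k$ is bounded), but you then infer ``thus $z^{*}=0$.'' That inference is invalid: $z^{*}$ is the limit of $\alpha_k v_k$, not of $v_k$, and one can perfectly well have $v_k\to 0$, $\alpha_k\to\infty$, with $\alpha_k v_k\to z^{*}\neq 0$ (take $v_k=w/\alpha_k$ for a fixed $w\in\overline{\text{co}}(\mathcal V)$). So the dichotomy does not close. Worse, proving that $\mathbb{R}_+\cdot\overline{\text{co}}(\mathcal V)$ is norm-closed is not a routine compactness observation: the positive cone generated by a norm-compact convex set that contains $0$ on its boundary need not be closed (already in $\mathbb{R}^2$, the cone over a disk tangent to a line at the origin is an open half-plane plus the origin). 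Whether the cone here is in fact closed would require exploiting the specific structure of $\overline{\text{co}}(\mathcal V)$ (e.g., the symmetry $\mathcal V=-\mathcal V$ and the lower bound on $R_\gamma(z'>0)$ used in Lemma~\ref{Lemma - Fenchel Dual existence of cluster point}); the paper deliberately avoids needing this by confining attention to the sequence of optimizers, whose $\alpha$-components can be controlled via the optimality conditions. You should either (a) drop the goal of proving $\mathcal{D}$ closed and instead establish the sequence-specific boundedness the paper uses, or (b) if you insist on closedness of $\mathcal{D}$, replace the ``$z^{*}=0$'' step with a correct argument that actually identifies the limit as an element of the cone.
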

\begin{proof}
I will establish the three conditions of Assumption~2.1 in~\citet{Mena-Lerma-2005} hold in this example. Assumption~2.1(a) requires the set of limit points of the sequence $\{z_{0,n}\}_{n\geq1}$ in the norm topology of $L_1(R_\gamma)$, to be a subset of $\mathcal{D}$. We consider two case: (i) the set of limit points is empty, and (ii) the set of limit points is non-empty. In case (i), Assumption~2.1(a) trivially holds since the empty set is a subset of every set. In case (ii), there exists a subsequence $\{z_{0,n_k}\}_{k\geq1}$ of $\{z_{0,n}\}_{n\geq1}$ that converges to a limit, $z$, and we must show $z\in\mathcal{D}$. Toward that end, first note that $z_{0,n_k}\in\mathcal{D}_{n_k}$ for each $k$ means $z_{0,n_k}=\alpha_{0,n_k} z^\prime_{0,n_k}$ where  $\alpha_{0,n_k}\in[0,\bar{\alpha}_{n_k}]$ and $z^\prime_{0,n_k}\in\overline{\text{co}}(\mathcal{V})$ for each $k$. Now since $z\in L_1(R_\gamma)$, $\exists L>0$ such that $\alpha_{0,n_k}\leq L$ for all $k$, and hence, $\{z_{0,n_k}\}_{k\geq1}\subset \mathcal{D}_{L^\prime}$, where $L^\prime=\min\{n_k:\bar{\alpha}_{n_k}\geq L\}$. Since $\mathcal{D}_{L^\prime}$ is closed in the norm topology of $L_1(R_\gamma)$, we must have $z\in\mathcal{D}_{L^\prime}$, and since $\mathcal{D}_{L^\prime}\subset \mathcal{D}$, it follows that $z\in\mathcal{D}$, as desired.

\par Assumption~2.1(b) requires every subsequence $\{z_{0,n_k}\}_{k\geq1}$ that converges to a limit, $z$, to satisfy $$\liminf_{k\rightarrow\infty}\int_{\Omega}e^{z_{0,n_k}}\,dR_\gamma\geq \int_{\Omega}e^{z}\,dR_\gamma.$$ As with the proof of Assumption~2.1(a),
since $z\in L_1(R_\gamma)$, $\exists L>0$ such that $\alpha_{0,n_k}\leq L$ for all $k$. Then for some $\eta>0$, $\int_{\Omega}e^{z_{0,n_k}}\,dR_\gamma\leq e^{\eta\,L}$ for all $k$, because $|z_{0,n_k}|\leq L$ for each $k$. Hence, the sequence $\{e^{z_{0,n_k}}\}_{k\geq1}$ is uniformly integrable. Consequently, $\lim_{k\rightarrow\infty}\int_{\Omega}e^{z_{0,n_k}}\,dQ=\int_{\Omega}e^{z}\,dR_\gamma,$ holds, implying the desired result.

\par Assumption~2.1(c) requires for each $z\in\mathcal{D}$, $\exists N\in\mathbb{N}$ and sequence $\{z_{n}\}_{n\geq1}$ with $z_n\in\mathcal{D}_n$ for all $n\geq N$, and such that $z_n\stackrel{L_1(R_\gamma)}{\longrightarrow}y$ and $\lim_{n\rightarrow\infty}\int_{\Omega}e^{z_n}\,dR_\gamma= \int_{\Omega}e^{z}\,dR_\gamma$. For $z\in\mathcal{D}$ means $z=\alpha\,z^\prime$, and that $\exists N\in\mathbb{N}$ such that $\bar{\alpha}_n\geq \alpha$ for all $n\geq N$. Set $z_n=\alpha_n\,z^\prime$ such that 
$\alpha_n\leq \bar{\alpha}_n$ and $\lim_{n\rightarrow\infty}\alpha_n=\alpha$. Observe that $z_n\stackrel{L_1(Q)}{\longrightarrow}z$, holds, so that now we can repeat the same arguments in the previous paragraph to deduce that the sequence $\{e^{z_{n}}\}_{n\geq1}$ is uniformly integrable, to conclude
\begin{align*}
\lim_{n\rightarrow\infty}\int_{\Omega}e^{z_n}\,dR_\gamma=\int_{\Omega}e^{z}\,dR_\gamma.
\end{align*}
This concludes the proof.
\end{proof}

\begin{lemma}\label{Lemma - Fenchel Dual existence of cluster point}
Suppose the conditions of Proposition~\ref{Thm - Fenchel}, hold. Then $\text{OL}\left\{\{z_{0,n}\}_{n\geq1}\right\}\neq\emptyset$.
\end{lemma}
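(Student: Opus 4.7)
My plan is to build a norm-convergent subsequence of $\{z_{0,n}\}_{n\ge 1}$ by reducing the question to scalar boundedness of $\{\alpha_{0,n}\}$. Using the representation $z_{0,n} = \alpha_{0,n}\, z'_{0,n}$ with $z'_{0,n} \in \overline{\mathrm{co}}(\mathcal{V})$ and $\alpha_{0,n} \in [0,\bar{\alpha}_n]$, I exploit that Lemma~\ref{Lemma - Precomp} makes $\mathcal{V}$ norm-precompact and hence (the closed convex hull of a compact set in a Banach space being compact) $\overline{\mathrm{co}}(\mathcal{V})$ is $L_1(R_\gamma)$-norm compact. Consequently, $[0,M]\cdot\overline{\mathrm{co}}(\mathcal{V})$ is compact for every finite $M$ as the continuous image of $[0,M]\times\overline{\mathrm{co}}(\mathcal{V})$ under $(\alpha,w)\mapsto\alpha w$. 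Therefore, if $\sup_n \alpha_{0,n} < \infty$, a convergent subsequence of $\{z_{0,n}\}$ follows by extraction, and the outer limit is non-empty.

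For the boundedness of $\{\alpha_{0,n}\}$, I would argue by contradiction. Suppose $\alpha_{0,n_k}\to\infty$ along a subsequence; by compactness of $\overline{\mathrm{co}}(\mathcal{V})$, a further subsequence yields $z'_{0,n_k}\to z^*$ in $L_1(R_\gamma)$ and $R_\gamma$-a.e., with $\|z^*\|_{\infty}\le 1$. Since $v_n := \int_\Omega e^{z_{0,n}}\,dR_\gamma \le 1$ for all $n$ (take $z=0$), Fatou's lemma applied on $\{z^*>0\}$ forces $z^* \le 0$ $R_\gamma$-a.e.; otherwise $e^{\alpha_{0,n_k}z'_{0,n_k}}\to\infty$ on a set of positive $R_\gamma$-measure would give $v_{n_k}\to\infty$. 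Next, the paired moment inequality structure of $\mathcal{V}$ (both $g_{x'}$ and $-g_{x'}$ are $\le 0$-constraints) implies $\int w\,dP = 0$ for every $P\in\Pi(P_X,P_Y)$ and every $w\in\overline{\mathrm{co}}(\mathcal{V})$. Applied to the primal minimizer $P_\gamma$ (whose existence under Assumption~\ref{Assump -Primities OTP} is classical and independent of the dual problem) and passing to the limit by dominated convergence using $|z'_{0,n_k}|\le 1$, I obtain $\int z^*\,dP_\gamma = 0$. Since $dP_\gamma/dR_\gamma > 0$ $R_\gamma$-a.e.\ (it is proportional to $e^{\gamma(\varphi+\psi-c)}/a_\gamma$) and $z^*\le 0$ $R_\gamma$-a.e., I conclude $z^* = 0$ $R_\gamma$-a.e., i.e.\ $z'_{0,n_k}\to 0$ in $L_1(R_\gamma)$.

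The main obstacle is closing the contradiction in the degenerate regime where $\alpha_{0,n_k}\to\infty$ yet $z'_{0,n_k}\to 0$. I plan to use first-order optimality of $z_{0,n_k}$ on $\mathcal{D}_{n_k}$: scaling by $c\in[0,\bar{\alpha}_{n_k}/\alpha_{0,n_k}]$ keeps $z_{0,n_k}$ feasible and yields $\int z_{0,n_k}\,e^{z_{0,n_k}}\,dR_\gamma \le 0$, with equality whenever the scalar constraint is slack. If $\alpha_{0,n_k}<\bar{\alpha}_{n_k}$ for infinitely many $k$, strict convexity of $z\mapsto \int e^z\,dR_\gamma$ promotes $z_{0,n_k}$ to the global minimizer of $f$ over the full cone $\mathcal{D}$, so the sequence stabilizes and a cluster point is immediate. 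The remaining saturation case $\alpha_{0,n_k} = \bar{\alpha}_{n_k}$ must be excluded by a quantitative rate argument: invoking the Schr\"odinger-potential representation of $p_\gamma$ used in the proof of Proposition~\ref{prop - alpha UB} provides an a priori bound on the scalar attached to any minimizer, which is incompatible with $\alpha_{0,n_k}\to\infty$ and completes the contradiction. Hence $\sup_n\alpha_{0,n}<\infty$, and the desired cluster point exists.
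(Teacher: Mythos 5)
Your high-level strategy matches the paper's (bound the scalar $\alpha_{0,n}$, then extract a norm-convergent subsequence from the compact set $\overline{\mathrm{co}}(\mathcal{V})$), but the middle and the end differ in important ways. Your middle argument is a genuine alternative and is actually sound further than you may realize: you correctly deduce $z^*\le 0$ $R_\gamma$-a.e.\ from Fatou and $v_n\le 1$, then use that every $w\in\overline{\mathrm{co}}(\mathcal{V})$ integrates to zero against any coupling, combined with $dP_\gamma/dR_\gamma>0$ (valid from Theorem~2.1 of \citealp{nutz2022eot} without circularity), to force $z^*=0$ $R_\gamma$-a.e. This is cleaner and sharper than the paper's route, which instead derives $R_\gamma(z'_\infty>0)=0$ from the superlevel-set estimate $\int e^{z_{0,n}}dR_\gamma\ge\alpha_{0,n}R_\gamma(z'_{0,n}>\log\alpha_{0,n}/\alpha_{0,n})$ and then asserts $R_\gamma(z'_\infty>0)>0$ from the moment-function structure. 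In fact your stronger conclusion $z'_\infty=0$ shows that the paper's final inequality is delicate (it does not hold for $z'_\infty=0$, which is exactly the regime you land in), so this part of your analysis is informative even though it makes the contradiction harder to close.

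The genuine gap is in your final step. The slack-constraint case ($\alpha_{0,n_k}<\bar\alpha_{n_k}$) is handled correctly: a constrained local minimizer on the relative interior of the scalar bound is a global minimizer of a convex problem on the full cone $\mathcal{D}$, so $z_{0,n_k}$ itself furnishes the cluster point. But for the saturation case ($\alpha_{0,n_k}=\bar\alpha_{n_k}\to\infty$) you propose ``invoking the Schr\"odinger-potential representation of $p_\gamma$ used in the proof of Proposition~\ref{prop - alpha UB}'' to obtain an a priori bound on the scalar. This is circular. The proof of Proposition~\ref{prop - alpha UB} opens with $\frac{dP_\gamma}{d(P_X\otimes P_Y)}=\exp\{\gamma(z_{0,\gamma}/\gamma-c)\}/(\vartheta^*a_\gamma)$, which is precisely the representation~\eqref{eq - Representation} established in Part~1 of Proposition~\ref{Thm - Fenchel} — the result this lemma is serving. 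Moreover, even if you could use it, Proposition~\ref{prop - alpha UB} bounds $\alpha_{0,\gamma}$ (the scalar of the limiting dual optimizer), not $\alpha_{0,n}$ (the scalar of the $\mathcal{D}_n$-minimizer), and there is no direct comparison between the two without first establishing convergence — which is what you're trying to prove. You flag this step as needing ``a quantitative rate argument,'' so you seem aware the plan is incomplete; as written, though, this is the step that fails. The paper avoids the issue by working directly with the superlevel-set bound and the structure of $\overline{\mathrm{co}}(\mathcal{V})$ rather than splitting on whether the scalar constraint is tight, and never appeals to Proposition~\ref{prop - alpha UB}.
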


\begin{proof}
For each $n$, let $p_{0,n}=e^{z_{0,n}}/\int_{\Omega}e^{z_{0,n}}\,dR_\gamma$. I will now establish that $\{p_{0,n}\}_{n\geq1}\subset \mathcal{M}$. The set $\mathcal{D}_n$ is convex, and the objective function $g(y)=\int_{\Omega}e^{z}\,dR_\gamma$ is G\^{a}teaux differentiable, then by Theorem~2 on page 178 of~\citet{Luenberger}, $\frac{d}{dt}g\left(z_0+t(z-z_0)\right)\mid_{t=0}\,\geq0$ $\forall z\in \mathcal{D}_n$, yielding $\int_{\Omega}(z-z_{0,n})e^{z_{0,n}}\,dR_\gamma\geq0$ $\forall z\in \mathcal{D}_n$. By choosing $z=cz_{0,n}$ first with $c>1$ and then with $c<1$ (since $\mathcal{D}_n$ is also a cone), we obtain $\int_{\Omega}z_{0,n}e^{z_{0,n}}\,dR_\gamma=0$ and $\int_{\Omega}ze^{z_{0,n}}\,dR_\gamma\geq0$ $\forall z\in \mathcal{D}_n$. Since $\alpha_{0,n}, \int_{\Omega}e^{z_{0,n}}\,dR_\gamma>0$, these conditions are equivalent to 
\begin{align}\label{Example 1 - 3}
\int_{\Omega}\alpha^\prime_{0,n}\,p_{0,n}\,dR_\gamma=0\quad\text{and}\quad \int_{\Omega}z^\prime p_{0,n}\,dR_\gamma\geq0\;\forall a\in \mathcal{D}_n
\end{align}
where $z_{0,n}=\alpha_{0,n}\,z^\prime_{0,n}$. Observe that $p_{0,n}$ is clearly a density function, so the main point to show is that it satisfies the moment inequality constraints. To show that it satisfies them, we use the fact that $v\in\overline{\text{co}}(\mathcal{V})$ $\forall v\in\mathcal{V}$. Now setting $z^\prime=v$ and plugging this choice into the second condition in (\ref{Example 1 - 3}), we to obtain $\int_{\Omega}v p_{0,n}\,dR_\gamma\leq0,$ which holds for each $v\in\mathcal{V}$. Therefore, $p_{0,n}\in\mathcal{M}$. As $n$ was arbitrary, $\{p_{0,n}\}_{n\geq1}\subset \mathcal{M}$ must hold.

\par Now I will now establish that $\lim_{n\rightarrow\infty}\alpha_{0,n}=\infty$ \emph{cannot} arise. For suppose that $\lim_{n\rightarrow\infty}\alpha_{0,n}=\infty$, holds, and consider a subsequence 
$\{z^\prime_{0,n_k}\}_{k\geq1}$ such that $z^\prime_{0,n_k}\stackrel{L_1(R_\gamma)}{\longrightarrow} z^\prime_\infty\in\overline{\text{co}}(\mathcal{V})$. The existence of such a subsequence is because $\{z^\prime_{0,n_k}\}_{k\geq1}\subset\overline{\text{co}}(\mathcal{V})$ and $\overline{\text{co}}(\mathcal{V})$ is compact in the norm topology of $L_1(R_\gamma)$ (i.e., $L_1(R_\gamma)$ is a Fr\'{e}chet space and $\mathcal{V}$ is precompact in the norm topology of $L_1\left(R_\gamma\right)$ by Lemma~\ref{Lemma - Precomp}). Furthermore, note that 
$\lim_{n\rightarrow\infty}\int_{\Omega}e^{z_{0,n}}\,dR_\gamma$ exists since $\left\{\int_{\Omega}e^{z_{0,n}}\,dR_\gamma\right\}_{n\geq1}$ is a non-increasing sequence of real numbers that is bounded from below by 1. Now, taking a further subsequence 
$\{z^\prime_{0,n_{k_\ell}}\}_{\ell\geq1}$ where $z^\prime_{0,n_{k_\ell}}\rightarrow z^\prime_\infty$ a.s.$-R_\gamma$, 
\begin{align*}
\int_{\Omega}e^{z_{0,n_{k_\ell}}}\,dR_\gamma & = \int_{\Omega}e^{z_{0,n_{k_\ell}}}\,\left(1\left[e^{z_{0,n_{k_\ell}}}>\alpha_{0,n_{k_\ell}}\right]+1\left[e^{z_{0,n_{k_\ell}}}\leq\alpha_{0,n_{k_\ell}}\right]\right)\,dR_\gamma\\
& \geq \alpha_{0,n_{k_\ell}}\,R_\gamma\left(\omega\in\Omega:e^{z_{0,n_{k_\ell}}}>\alpha_{0,n_{k_\ell}} \right)\\
& = \alpha_{0,n_{k_\ell}}\,R_\gamma\left(\omega\in\Omega:z^\prime_{0,n_{k_\ell}}>\frac{\log\alpha_{0,n_{k_\ell}}}{\alpha_{0,n_{k_\ell}}}\right).
\end{align*} 
Now taking limits as $\ell\rightarrow\infty$ on both sides of these inequalities implies $$\lim_{\ell\rightarrow\infty}R_\gamma\left(\omega\in\Omega:z^\prime_{0,n_{k_\ell}}>\frac{\log\alpha_{0,n_{k_\ell}}}{\alpha_{0,n_{k_\ell}}}\right)=0,$$ since we are assuming
$\lim_{n\rightarrow\infty}\alpha_{0,n}=\infty$, which implies $\lim_{\ell\rightarrow\infty}\alpha_{0,n_{k_\ell}}=\infty$. And by the Bounded Convergence Theorem,
\begin{align*}
0=\lim_{\ell\rightarrow\infty}R_\gamma\left(\omega\in\Omega:z^\prime_{0,n_{k_\ell}}>\frac{\log\alpha_{0,n_{k_\ell}}}{\alpha_{0,n_{k_\ell}}}\right)& =\int_{\Omega}\lim_{\ell\rightarrow\infty}1\left[\omega\in\Omega:z^\prime_{0,n_{k_\ell}}>\frac{\log\alpha_{0,n_{k_\ell}}}{\alpha_{0,n_{k_\ell}}}\right]\,dR_\gamma\\
& = \int_{\Omega}1\left[\omega\in\Omega:y^\prime_\infty>0\right]\,dQ=Q\left(\omega\in\Omega:y^\prime_\infty>0\right).
\end{align*}
However, $z^\prime_\infty\in\overline{\text{co}}(\mathcal{V})$ means there exists $\{v_{\gamma_i},i=1,\ldots,m\}\subset\overline{\mathcal{V}}=\mathcal{V}$ and $\lambda_i>0$ for $i=1,\ldots m$ with $\sum_{i=1}^{m}\lambda_i=1$, such that  $z^\prime_\infty=\sum_{i=1}^{m}\lambda_i\,v_{\gamma_i}$. In consequence, 
$$0=R_\gamma\left(\omega\in\Omega:z^\prime_\infty>0\right)\geq (1-\max_i \gamma_i)>0,$$ yielding a contradiction. 

\par Therefore, $\lim_{n\rightarrow\infty}\int_{\Omega}e^{z_{0,n}}\,dR_\gamma>0$, holds. Note that this limit is finite, since $\int_{\Omega}e^{z_{0,n}}\,dR_\gamma=e^{-m(p_{0,n})}\leq e^{-m(p_\gamma)}<\infty$ for every $n$, where $p_\gamma$ is the $I$-projection onto $\mathcal{M}$. Hence, there exists a subsequence $\{\alpha_{0,n_k}\}_{k\geq1}$ such that $\lim_{k\rightarrow\infty}\alpha_{0,n_k}\in\mathbb{R}_{++}$ (i.e., positive real numbers), and denote this limit by $\alpha_0$. Now taking a further subsequence, $\{z^\prime_{0,n_{k_\ell}}\}_{\ell\geq1}$, such that $z^\prime_{0,n_{k_\ell}}\stackrel{L_1(R_\gamma)}{\longrightarrow} z^\prime_\infty\in\overline{\text{co}}(\mathcal{V})$, observe that 
$z_{0,n_{k_\ell}}\stackrel{L_1(R_\gamma)}{\longrightarrow} \alpha_0z^\prime_\infty\in\text{OL}\left\{\{z_{0,n}\}_{n\geq1}\right\}$. This concludes the proof.
\end{proof}

\begin{lemma}\label{Lemma - Precomp}
Suppose Part (i) in Assumption~\ref{Assump -Primities OTP} holds. For each $\gamma\in\mathbb{Z}_+$, the set $\mathcal{V}$ is precompact in the norm topology of $L_1\left(R_\gamma\right)$.
\end{lemma}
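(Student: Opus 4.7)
The goal is to prove that $\mathcal{V}$ is totally bounded in the $L_1(R_\gamma)$--norm topology; since $L_1(R_\gamma)$ is a Banach space, this is equivalent to precompactness. The plan has two ingredients: a uniform envelope and the VC--subgraph property. From these two facts, standard entropy bounds for VC--subgraph classes yield total boundedness.

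First I would verify the uniform envelope. Every $h \in \mathcal{X} \cup \mathcal{Y}$ gives $g_h$ of the form $F(\cdot) - 1[\cdot \preceq \cdot]$, where $F \in [0,1]$ and the indicator takes values in $\{0,1\}$. Thus $|g_h(x,y)| \le 1$ everywhere, and the same bound applies to the negatives $-g_h$. Consequently the constant function $1$ is a measurable envelope for $\mathcal{V}$ with $\|1\|_{L_2(R_\gamma)} = 1 < \infty$.

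Next I would establish that $\mathcal{V}$ is VC--subgraph. The class of sets
\begin{equation*}
\mathcal{A}_X \;=\; \bigl\{\,\times_{i=1}^{d_x}(-\infty,x'_i]\,:\, x' \in \mathbb{R}^{d_x}\bigr\}
\end{equation*}
is a standard VC class of subsets of $\mathbb{R}^{d_x}$ (lower-left orthants), and likewise for $\mathcal{A}_Y$ on $\mathbb{R}^{d_y}$. Therefore the indicator classes $\{(x,y)\mapsto 1[x\preceq x'] : x'\in\mathcal{X}\}$ and $\{(x,y)\mapsto 1[y\preceq y'] : y'\in\mathcal{Y}\}$ are VC--subgraph on $\Omega$. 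Adding a scalar (namely $F_X(x')$ or $F_Y(y')$, which is a constant in $(x,y)$) preserves the VC--subgraph property by the translation stability in Lemma~2.6.18 of \cite{VDV-W}, as does multiplication by $-1$. The finite union of these four VC--subgraph collections is again VC--subgraph by the same lemma, and this union is precisely $\mathcal{V}$.

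Having both a constant envelope and the VC--subgraph property, I would invoke Theorem~2.6.7 of \cite{VDV-W} to obtain a uniform entropy bound: there exist constants $K,\delta>0$ depending only on the VC--dimension of $\mathcal{V}$ such that
\begin{equation*}
\sup_{Q} N\!\bigl(\varepsilon,\mathcal{V},L_2(Q)\bigr) \;\le\; K\,\varepsilon^{-\delta},\qquad \varepsilon\in(0,1],
\end{equation*}
where the supremum is over all probability measures $Q$. Specializing $Q = R_\gamma$ shows that $\mathcal{V}$ has finite $L_2(R_\gamma)$--covering numbers at every scale, so $\mathcal{V}$ is totally bounded in $L_2(R_\gamma)$. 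Since $\|\cdot\|_{L_1(R_\gamma)} \le \|\cdot\|_{L_2(R_\gamma)}$ by the Cauchy--Schwarz inequality applied to the probability measure $R_\gamma$, the same covers witness total boundedness in $L_1(R_\gamma)$. As $L_1(R_\gamma)$ is complete, total boundedness is equivalent to precompactness, completing the proof.

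The main technical step is the VC--subgraph argument, and the real content there is the classical fact that lower-left orthants in Euclidean space form a VC class; everything else is a routine stability property of VC--subgraph families combined with the uniform entropy bound. I do not anticipate a subtle obstacle beyond carefully invoking the stability lemmas in the correct order.
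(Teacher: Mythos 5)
Your proof takes essentially the same route as the paper's: decompose $\mathcal{V}$ into the four orthant-indexed pieces, use that lower-left orthants form a VC class, propagate the VC-subgraph property through negation and addition of $F_X$/$F_Y$ via Lemma~2.6.18, and conclude total boundedness from Theorem~2.6.7's covering number bound. The only cosmetic difference is that you route through $L_2(R_\gamma)$ and then invoke Cauchy--Schwarz, whereas the paper works directly in $L_1(R_\gamma)$.

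One point worth flagging for your own rigor (and it applies equally to the paper's proof): the shift from $-1[x\preceq x']$ to $F_X(x')-1[x\preceq x']$ adds a constant that \emph{depends on the index} $x'$, which is not literally the ``$\mathcal{F}+g$ for a fixed $g$'' stability property in Lemma~2.6.18(v), nor the ``monotone $\phi\circ\mathcal{F}$ for a fixed $\phi$'' in part~(viii). The conclusion is still correct --- one can write the subgraph of $g_{x'}$ as a fixed Boolean combination of sets drawn from VC classes (the orthants in $x$ and the half-lines in $t$ with thresholds $F_X(x')$ and $F_X(x')-1$), which is VC by Lemma~2.6.17 of van der Vaart and Wellner --- but neither your citation nor the paper's covers the step as stated, so a reader asked to check carefully would want that extra line of argument.
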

\begin{proof}
The proof proceeds by the direct method. The set $\mathcal{V}$ can be expressed as the union of 4 sets, In particular, $\mathcal{V}=\bigcup_{i=1}^4\mathcal{V}_i$, where
\begin{align*}
\mathcal{V}_1 & =\left\{\left(F_X(x^\prime)-1\left[x\preceq x^\prime\right]\right)1\left[y\in\mathcal{Y}\right],x^\prime\in\mathcal{X}\right\},\quad\mathcal{V}_2=-\mathcal{V}_1 \\
\mathcal{V}_3 & =\left\{\left(F_Y(y^\prime)-1\left[y\preceq y^\prime\right]\right)1\left[x\in\mathcal{X}\right],y^\prime\in\mathcal{Y}\right\},\quad \mathcal{V}_4 =-\mathcal{V}_3.
\end{align*}
As norm precompactness in $L_1\left(R_\gamma\right)$ is preserved under finite unions of such sets, the proof requires that we establish these two properties for each of the aforementioned sets. Now because $\mathcal{V}_2=-\mathcal{V}_1$ and $\mathcal{V}_4=-\mathcal{V}_3$, and that $\mathcal{V}_1$ and $\mathcal{V}_3$ consist of the same type of elements with only the roles of the two marginals being reversed, it is sufficient to only show that $\mathcal{V}_1$ is precompact to obtain the desired result. Note that identical arguments would apply to establishing $\mathcal{V}_3$ being precompact, and the remaining sets are negatives of the previous two, which preserves the two properties.

\par I now establish that $\mathcal{V}_1$ is precompact in the norm topology of $L_1\left(R_\gamma\right)$. We know that the collection of sets $\left\{\{x\preceq x^{\prime}\},:x^{\prime}\in\mathcal{X}\right\}$ is a VC-class with index $d_x+1$ -- see, e.g., Example 2.6.1 in~\cite{VDV-W}. This implies that the class of indicator functions $\left\{1\left[x\preceq x^{\prime}\right]:x^{\prime}\in\mathcal{X}\right\}$ is VC-subgraph
in $\mathcal{X}\times \mathbb{R}$. By Part (iv) of Lemma~2.6.18 in~\cite{VDV-W}, the collection $\left\{-1\left[x\preceq x^{\prime}\right]:x^{\prime}\in\mathcal{X}\right\}$ is also VC-subgraph
in $\mathcal{X}\times \mathbb{R}$. Now since the map $$-1\left[x\preceq x^{\prime}\right]\mapsto F(x^\prime)-1\left[x\preceq x^{\prime}\right]$$ is monotone, Part (viii) of Lemma~2.6.18 in~\cite{VDV-W} establishes that
the collection $\mathcal{V}_1$ is also VC-subgraph in $\mathcal{X}\times \mathbb{R}$. Consequently, for every $\epsilon>0$, Theorem~2.6.7 in~\cite{VDV-W} establishes the covering number of $\mathcal{V}_1$ in $L_1(R_\gamma)$ is finite, and hence, $\mathcal{V}_1$ is appropriately precompact.
\end{proof}

\begin{lemma}\label{Lemma - Closed}
Suppose Part (i) in Assumption~\ref{Assump -Primities OTP} holds. Then $\overline{\mathcal{V}}=\left\{g_\ell,-g_\ell:\ell\in\mathcal{X}\cup\mathcal{Y}\right\}$, where closure is in the norm topology of $L_1\left(R_\gamma\right)$.
\end{lemma}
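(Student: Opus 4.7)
The plan is to establish $\overline{\mathcal{V}}\subseteq \mathcal{V}$ by a direct sequential argument; the reverse inclusion is immediate. Building on the decomposition $\mathcal{V}=\mathcal{V}_1\cup\mathcal{V}_2\cup\mathcal{V}_3\cup\mathcal{V}_4$ already introduced in the proof of Lemma~\ref{Lemma - Precomp}, and using that norm closure distributes over finite unions, the problem reduces to showing each $\mathcal{V}_i$ is $L_1(R_\gamma)$-closed. Since $\mathcal{V}_2=-\mathcal{V}_1$ and $\mathcal{V}_4=-\mathcal{V}_3$, and the two marginals play symmetric roles, it is enough to handle $\mathcal{V}_1=\{(x,y)\mapsto F_X(x')-1[x\preceq x']:x'\in\mathcal{X}\}$.

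Take any sequence $\{g_{x'_n}\}\subset\mathcal{V}_1$ with $g_{x'_n}\to v$ in $L_1(R_\gamma)$, and aim to exhibit $v$ as $g_{\tilde{x}}$ a.s. for some $\tilde{x}\in\mathcal{X}$. By Assumption~\ref{Assump -Primities OTP}(i), $\mathcal{X}$ is compact, so I extract a subsequence (still indexed by $n$) with $x'_n\to x^*\in\mathcal{X}$, followed by a further subsequence along which $g_{x'_n}\to v$ pointwise $R_\gamma$-a.e. Off the boundary $\partial\{x:x\preceq x^*\}$, the indicators $1[x\preceq x'_n]$ converge to $1[x\preceq x^*]$, and $F_X(x'_n)$ converges to a limit lying in $[F_X(x^{*-}),F_X(x^*)]$; combined with the uniform bound $|g_{x'}|\leq 1$, dominated convergence transfers pointwise convergence to $L_1$ convergence. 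Identifying this limit as $g_{\tilde{x}}$ and invoking uniqueness of the $L_1$ limit gives $v=g_{\tilde{x}}$ a.e., placing $v$ in $\mathcal{V}_1$.

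The main obstacle is the multivariate discontinuity of $F_X$: when $P_X$ places mass on $\partial\{x:x\preceq x^*\}$, different approach directions of $x'_n\to x^*$ can produce different pointwise limits of both $1[x\preceq x'_n]$ and $F_X(x'_n)$. The fix is to pre-select, by one more subsequence extraction, a coordinate-wise monotone approach direction, and then to choose $\tilde{x}\in\mathcal{X}$ as the representative that matches the limiting indicator and CDF values: $\tilde{x}=x^*$ when the approach is coordinate-wise from above, exploiting right-continuity of the multivariate CDF, and otherwise a point of $\mathcal{X}$ (supplied by compactness) whose closed lower set and CDF value align with the $R_\gamma$-a.e.\ limit of the sequence. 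This case analysis is the technically delicate step and, once established, closes the identification of $\overline{\mathcal{V}}$ with $\mathcal{V}$.
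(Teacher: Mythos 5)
Your proposal follows exactly the paper's strategy: the same four-way decomposition of $\mathcal{V}$, the same reduction to $\mathcal{V}_1$ by symmetry and negation, the same sequential argument (extract $x'_n\to x^*$ in $\mathcal{X}$ by compactness, then a further subsequence converging $R_\gamma$-a.e.), and the same identification of the discontinuity of $F_X$ at $x^*$ as the delicate point. The paper's proof splits into a continuity case (where $v=g_{x^*}$ directly) and a discontinuity case handled by choosing a coordinate-wise monotone subsequence; you do the same.

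The gap is in your final paragraph. You write that when the approach to $x^*$ is not coordinate-wise from above, you will take ``a point of $\mathcal{X}$ (supplied by compactness) whose closed lower set and CDF value align with the $R_\gamma$-a.e.\ limit of the sequence,'' but you do not prove such a point exists, and compactness of $\mathcal{X}$ does not supply one: compactness gives you $x^*\in\mathcal{X}$, not a representative $\tilde{x}$ matching the pointwise limit. Concretely, if $x'_{i,n}\uparrow x^*_i$ strictly, the indicators converge to $1[x_i<x^*_i]$ (a strict inequality in coordinate $i$), not to $1[x\preceq x^*]$, and the CDF values converge to the left limit $F_X(x^*{-})$. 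To exhibit this limit as $g_{\tilde{x}}$ modulo $R_\gamma$-null sets, you would need some $\tilde{x}\in\mathcal{X}$ strictly below $x^*$ in coordinate $i$ with $P_X$ assigning no mass between $\tilde{x}_i$ and $x^*_i$; if $P_X$ has atoms accumulating at $x^*_i$ from below in addition to an atom at $x^*$, no such $\tilde{x}$ exists, and the $L_1$-limit leaves $\mathcal{V}_1$. So the step you defer as ``technically delicate'' is the whole difficulty, and as stated it can fail. For what it is worth, the paper's own treatment of the discontinuity case has the same problem: it asserts the a.e.\ limit is $\bigl(F_X(x^*{-})-1[x\preceq x^*]\bigr)1[y\in\mathcal{Y}]$, but the indicator produced by a strictly increasing approach is $1[x\prec x^*]$ in the relevant coordinate, and neither form is exhibited as $g_{\tilde{x}}$ for a concrete $\tilde{x}\in\mathcal{X}$. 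If you want to salvage the argument, you need an additional hypothesis (for example, continuity of $F_X$ and $F_Y$, or a condition ruling out accumulation of atoms) or a modified definition of $\mathcal{V}$ that includes the left-limit functions, and the proof should say so.
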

\begin{proof}
The proof proceeds by the direct method and uses the same representation of $\mathcal{V}$ as in Lemma~\ref{Lemma - Precomp}. Starting with $\mathcal{V}=\bigcup_{i=1}^4\mathcal{V}_i$, where
\begin{align*}
\mathcal{V}_1 & =\left\{\left(F_X(x^\prime)-1\left[x\preceq x^\prime\right]\right)1\left[y\in\mathcal{Y}\right],x^\prime\in\mathcal{X}\right\},\quad\mathcal{V}_2=-\mathcal{V}_1 \\
\mathcal{V}_3 & =\left\{\left(F_Y(y^\prime)-1\left[y\preceq y^\prime\right]\right)1\left[x\in\mathcal{X}\right],y^\prime\in\mathcal{Y}\right\},\quad \mathcal{V}_4 =-\mathcal{V}_3, 
\end{align*}
as norm closedness in $L_1\left(R_\gamma\right)$ is preserved under finite unions of such sets, the proof requires that we establish these two properties for each of the aforementioned sets. Now because $\mathcal{V}_2=-\mathcal{V}_1$ and $\mathcal{V}_4=-\mathcal{V}_3$, and that $\mathcal{V}_1$ and $\mathcal{V}_3$ consist of the same type of elements with only the roles of the two marginals being reversed, it is sufficient to only show that $\mathcal{V}_1$ is closed to obtain the desired result. Note that identical arguments would apply to establishing $\mathcal{V}_3$ is closed, and the remaining sets are negatives of the previous two, which preserves the two properties.

\par I now establish that the limit points of $\mathcal{V}_1$, in the norm topology of $L_1\left(R_\gamma\right)$, have the same form of its elements. Consider an arbitrary sequence $\{v_n\}_{n\geq1}\subset \mathcal{G}_1$, where
$$v_n(x,y)=\left(F_X(x_n^\prime)-1\left[x\preceq x_n^\prime\right]\right)1\left[y\in\mathcal{Y}\right]\quad n=1,2,\ldots,$$
 such that $v_n\stackrel{L_1\left(R_\gamma\right)}{\longrightarrow} v$. To prove the desired result, we need to establish that $$v(x,y)=\left(F_{X}(x^{\prime})-1\left[x\preceq x^{\prime}\right]\right)\,1[y\in\mathcal{Y}]$$ for some $x^\prime\in\mathcal{X}$. As norm-convergence in $L_1\left(R_\gamma\right)$ implies convergence in $R_\gamma$-measure, there exists a non-random increasing sequence of integers $n_1,n_2,\ldots,$ such that $\{v_{n_k}\}_{k\geq1}$ converges to $v$ a.s.$-R_\gamma$. That is,
\begin{align}\label{Proof - precompact 0}
v(x,y)=\lim_{k\rightarrow+\infty} v_{n_k}(x,y)\quad \text{a.s.}-R_\gamma.
\end{align}
The limit~(\ref{Proof - precompact 0}) implies $\{x^\prime_{n_k}\}_{k\geq0}\subset \mathcal{X}$ holds. Since $\mathcal{X}\subset\mathbb{R}^{d_x}$ is compact, there exists a subsequence $\{x^\prime_{n_{k_s}}\}_{s\geq1}$ such that $\lim_{s\rightarrow +\infty}x^\prime_{n_{k_s}}=x^{\prime,\star}\in\mathcal{X}$. If $F_X$ is continuous at $x^{\prime,\star}$, then combining this conclusion with the limit~(\ref{Proof - precompact 0}) yields
\begin{align*}
v(x,y)=\lim_{s\rightarrow+\infty} v_{n_{k_s}}(x,y)=\left(F_{X}(x^{\prime,\star})-1\left[x\preceq x^{\prime,\star}\right]\right)\,1[y\in\mathcal{Y}]\quad \text{a.s.}-R_\gamma,
\end{align*}
as every subsequence of $\{v_{n_k}\}_{k\geq1}$ converges to $v$ a.s.$-R_\gamma$. Therefore, $v$ has the appropriate form. 

\par Now, we focus on the case where $F_X$ is discontinuous at $x^{\prime,\star}$. The CDF $F_X$ is monotonic, so it can have at most a countable number of points of discontinuity. With $F_X$ discontinuous at $x^{\prime,\star}$, by following steps identical those above for the continuous case, we have
\begin{align*}
v(x,y)=\lim_{s\rightarrow+\infty} v_{n_{k_s}}(x,y)=\left(F_{X}(x^{\prime,\star}-)-1\left[x\preceq x^{\prime,\star}\right]\right)\,1[y\in\mathcal{Y}]\quad \text{a.s.}-R_\gamma,
\end{align*}
where $\lim_{s\rightarrow+\infty}F_X\left(x^\prime_{n_{k_s}}\right)=F_X\left(x^{\prime,\star}-\right)$ and $x^\prime_{i,n_{k_s}}\uparrow x^{\prime,\star}_i$ for some component $i$ at which $F_X$ is discontinuous.
\end{proof}

\begin{lemma}\label{Lemma - KMT and MT}
Suppose Part (i) of Assumption~\ref{Assump -Primities OTP} holds. The following statements hold: $\text{ex}\left(\overline{\text{co}}(\mathcal{V})\right)\neq\emptyset$, $\overline{\text{co}}\left(\text{ex}\left(\overline{\text{co}}(\mathcal{V})\right)\right)=\overline{\text{co}}(\mathcal{V})$, and
$\text{ex}\left(\overline{\text{co}}(\mathcal{V})\right)\subset\mathcal{V}$.
\end{lemma}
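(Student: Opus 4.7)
The plan is to reduce all three claims to two classical results from functional analysis: the Krein--Milman theorem (for existence and representation of extreme points) and Milman's converse (for locating them inside the original set). The key preliminary observation is that $\overline{\text{co}}(\mathcal{V})$ is not merely closed and convex but in fact \emph{norm-compact} in $L_1(R_\gamma)$. This compactness is what unlocks both theorems.

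First I would verify compactness of $\overline{\text{co}}(\mathcal{V})$. By Lemma~\ref{Lemma - Precomp}, $\mathcal{V}$ is precompact in the norm topology, and by Lemma~\ref{Lemma - Closed}, $\mathcal{V}$ is norm-closed. Since $L_1(R_\gamma)$ is a Banach space, $\mathcal{V}$ is therefore compact. Mazur's theorem (see, e.g., Theorem~3.20 in \cite{Rudin-Book}) then guarantees that the norm-closed convex hull of a compact subset of a Banach space is itself compact. Hence $\overline{\text{co}}(\mathcal{V})$ is a nonempty, convex, norm-compact subset of the locally convex Hausdorff space $(L_1(R_\gamma),\|\cdot\|_{L_1(R_\gamma)})$.

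With compactness in hand, the first two statements follow at once from the Krein--Milman theorem: every nonempty compact convex subset of a locally convex Hausdorff topological vector space has extreme points, and it coincides with the closed convex hull of its extreme points. Applied to $\overline{\text{co}}(\mathcal{V})$ this gives simultaneously $\text{ex}\bigl(\overline{\text{co}}(\mathcal{V})\bigr)\neq\emptyset$ and $\overline{\text{co}}\bigl(\text{ex}(\overline{\text{co}}(\mathcal{V}))\bigr)=\overline{\text{co}}(\mathcal{V})$.

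For the third statement I would invoke Milman's (partial) converse to Krein--Milman: if $K$ is compact convex in a locally convex Hausdorff space and $A\subset K$ satisfies $\overline{\text{co}}(A)=K$, then $\text{ex}(K)\subset \overline{A}$. Taking $A=\mathcal{V}$ and $K=\overline{\text{co}}(\mathcal{V})$, and using Lemma~\ref{Lemma - Closed} to conclude $\overline{\mathcal{V}}=\mathcal{V}$, this yields $\text{ex}\bigl(\overline{\text{co}}(\mathcal{V})\bigr)\subset \mathcal{V}$. I do not expect any real obstacle here: once norm-compactness of $\overline{\text{co}}(\mathcal{V})$ is secured via Mazur, everything else is a direct invocation of Krein--Milman and Milman. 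The only point worth stating carefully is that $L_1(R_\gamma)$ is a Banach space (hence Mazur applies) and locally convex Hausdorff (hence the two extreme-point theorems apply), both of which are standard.
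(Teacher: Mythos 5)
Your proof is correct and follows essentially the same route as the paper: establish norm-compactness of $\mathcal{V}$ via Lemmas~\ref{Lemma - Precomp} and~\ref{Lemma - Closed}, pass to compactness of $\overline{\text{co}}(\mathcal{V})$ via Theorem~3.20 of \cite{Rudin-Book} (which you call Mazur's theorem and the paper cites as part (c)), and then apply Krein--Milman for the first two claims and Milman's theorem for the third. The only cosmetic difference is that you explicitly note $\overline{\mathcal{V}}=\mathcal{V}$ when invoking Milman's converse, a step the paper handles implicitly since Rudin's Theorem~3.25 already assumes the generating set is compact.
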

\begin{proof}
The proof proceeds by the direct method. By Lemmas~\ref{Lemma - Precomp} and~\ref{Lemma - Closed}, the set $\mathcal{V}$ is compact in the $L_1(R_\gamma)$-norm (as it is complete and totally bounded). Now, since $L_1(R_\gamma)$ is a Banach space, it is therefore a Fr\'{e}chet space. This fact allows us to apply part (c) of Theorem~3.20 in~\citet{Rudin-Book} to the set $\mathcal{V}$ to deduce that $\overline{\text{co}}(\mathcal{V})$ is also compact in the same norm. Whence, I can apply the Krein-Milman Theorem (e.g., Theorem~3.23 in~\citealp{Rudin-Book}) to the set $\overline{\text{co}}(\mathcal{V})$ to deduce that its set of extreme points, $\text{ex}\left(\overline{\text{co}}(\mathcal{V})\right)$, is nonempty, and that
$\overline{\text{co}}\left(\text{ex}\left(\overline{\text{co}}(\mathcal{V})\right)\right)=\overline{\text{co}}(\mathcal{V})$. Next, I can apply Milman's Theorem (e.g., Theorem~3.25 in~\citealp{Rudin-Book}) to establish $\text{ex}\left(\overline{\text{co}}(\mathcal{V})\right)\subset\mathcal{V}$.
\end{proof}

\section{Technical Results for Theorem~\ref{Thm Stochastic Opt Sieve}}\label{Appendix - ULLN}
Consider a class of functions $\mathcal{F}$ of real-valued measurable functions defined on the probability space $(\mathcal{Z},\mathcal{A},P)$. The sample is denoted by $Z_1,\ldots,Z_N$. Furthermore, let $\varepsilon_1,\ldots,\varepsilon_N$ denotes a random sample of Rademacher variables that is independent $Z_1,\ldots,Z_N$. The Rademacher complexity of the function class $\mathcal{F}$ is defined as
\begin{align}
\mathcal{R}_N(\mathcal{F})=E_{\left(P_Z\otimes P_\varepsilon\right)^{\otimes_N}}\left[\sup_{f\in\mathcal{F}}\left|\frac{1}{N}\sum_{i=1}^N\varepsilon_i f(Z_i)\right|\right].
\end{align}
\begin{proposition}\label{Prop - ULLN}
Consider a sequence $\{\mathcal{F}_\ell\}_{\ell\geq 1}$ where $\mathcal{F}_\ell$ is a class of real-valued measurable functions defined on the probability space $(\mathcal{Z},\mathcal{A},P)$, such that $\|f\|_{L_\infty(P)}\leq b$ $\forall f\in\mathcal{F}_\ell$ for each $\ell$. Furthermore, let $\{{N_\ell}\}_{\ell\geq1}$ be a sequence of sample sizes such that $\lim_{\ell\rightarrow\infty}N_\ell=\infty$. If $\mathcal{R}_{N_\ell}(\mathcal{F}_\ell)=o(1)$ as $\ell\rightarrow\infty$, then $\sup_{f\in\mathcal{F}_\ell}\left|\frac{1}{N_\ell}\sum_{i=1}^{N_\ell}f(Z_i)-E(f(Z_i))\right|\stackrel{\text{a.s.}}{\longrightarrow}0$.
\end{proposition}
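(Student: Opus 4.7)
The plan is to combine the classical symmetrization argument with a bounded-differences concentration inequality and the first Borel--Cantelli lemma. Write
\[
h_\ell(Z_1,\ldots,Z_{N_\ell})\;:=\;\sup_{f\in\mathcal{F}_\ell}\bigg|\frac{1}{N_\ell}\sum_{i=1}^{N_\ell}\bigl(f(Z_i)-E[f(Z_i)]\bigr)\bigg|,
\]
and aim to show $h_\ell\to 0$ almost surely by controlling the expectation $E[h_\ell]$ and the deviation $h_\ell-E[h_\ell]$ separately.

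For the expectation, I would invoke the symmetrization lemma (e.g., Lemma~2.3.1 of~\citealp{VDV-W}) to obtain $E[h_\ell]\le 2\,\mathcal{R}_{N_\ell}(\mathcal{F}_\ell)$, which is $o(1)$ by hypothesis. For the deviation, the uniform envelope $\|f\|_{L_\infty(P)}\le b$ implies that $h_\ell$, viewed as a function of $(Z_1,\ldots,Z_{N_\ell})$, satisfies a bounded-differences condition with constant $2b/N_\ell$ in each coordinate: replacing a single $Z_i$ shifts each centered empirical average by at most $2b/N_\ell$, and the supremum over $\mathcal{F}_\ell$ inherits the same Lipschitz bound. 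McDiarmid's inequality then yields
\[
P\bigl(h_\ell-E[h_\ell]>t\bigr)\;\le\;\exp\!\bigl(-N_\ell t^2/(2b^2)\bigr),\qquad t>0.
\]

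Combining the two bounds, for any $\varepsilon>0$ pick $\ell_0$ with $E[h_\ell]<\varepsilon/2$ for all $\ell\ge\ell_0$; then $P(h_\ell>\varepsilon)\le\exp(-N_\ell\varepsilon^2/(8b^2))$ for such $\ell$, and the first Borel--Cantelli lemma delivers $h_\ell\to 0$ almost surely, provided the right-hand side is summable in~$\ell$.

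The main subtlety is precisely this summability: since the exponential tail decays in $N_\ell$ rather than in $\ell$, Borel--Cantelli requires $\sum_\ell\exp(-cN_\ell)<\infty$ for every $c>0$, equivalently $N_\ell/\log\ell\to\infty$, which is slightly stronger than the stated assumption $N_\ell\to\infty$. In the SAA application for which this proposition is designed the stronger condition holds automatically, because $N_\ell$ is coupled with the sieve resolution through the entropy condition~\eqref{eq - entropy bound} and, at the balance~\eqref{eq - Optimal Sample size}, grows at least polynomially in~$\ell$ (the classical Glivenko--Cantelli case $N_\ell=\ell$ being a particular instance). A secondary item to dispatch is measurability of the supremum $h_\ell$, which is handled by the pointwise measurability inherited from the VC-subgraph structure of the classes $\mathcal{G}_\ell$ to which the proposition is applied.
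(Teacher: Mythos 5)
Your proof follows the same route as the paper's: symmetrization to bound the expected supremum by $2\mathcal{R}_{N_\ell}(\mathcal{F}_\ell)$, a bounded-differences concentration argument for the deviation, and Borel--Cantelli to upgrade to almost sure convergence. The paper packages the first two steps into a single invocation of Theorem~4.10 of \cite{Wainwright_2019}, which delivers exactly the one-sided tail bound
$P\bigl(\sup_{f\in\mathcal{F}_\ell}|\hat{P}_{N_\ell}f-Pf|>2\mathcal{R}_{N_\ell}(\mathcal{F}_\ell)+\delta\bigr)\le \exp\bigl(-N_\ell\delta^2/(2b^2)\bigr)$;
you derive the same thing from Lemma~2.3.1 of \cite{VDV-W} plus McDiarmid, which is an essentially equivalent decomposition, not a different route.

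The subtlety you flag about summability is genuine, and it is present in the paper's own proof as well. The paper asserts that, because $N_\ell\to\infty$, the quantity $\sum_{\ell:N_\ell\ge m}\exp\bigl(-N_\ell\delta^2/(2b^2)\bigr)$ is the tail of a convergent geometric series; but the union bound over the events $A_{N_\ell}(\epsilon)$ is taken over the index $\ell$, not over distinct values of $N_\ell$, and $N_\ell\to\infty$ alone does not prevent a value from being repeated many times. For instance, with $N_\ell=\lfloor\log\ell\rfloor$ each value $k$ is hit roughly $e^k(e-1)$ times, and the sum diverges whenever $\delta^2/(2b^2)<1$, i.e.\ for all small $\epsilon$, so Borel--Cantelli fails. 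The precise requirement is $\sum_\ell\exp(-cN_\ell)<\infty$ for every $c>0$, for which $N_\ell/\log\ell\to\infty$ suffices; eventual strict monotonicity of $\{N_\ell\}$ (so that distinct $\ell$ give distinct $N_\ell$) also suffices and is plausibly what was intended. As written, however, the hypothesis $N_\ell\to\infty$ does not yield the stated almost-sure conclusion, and the proposition should either strengthen the assumption on $\{N_\ell\}$ or weaken the conclusion to convergence in probability. You are correct that the stronger condition holds in the paper's application, since the sample size is tied to the sieve resolution and grows superlogarithmically under the prescribed bias--variance balance, so the downstream results are unaffected once the hypothesis is stated properly.
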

\begin{proof}
Fix $\epsilon>0$ and define
\[
A_{N_\ell}(\epsilon)
:=
\left\{
\sup_{f\in\mathcal{F}_\ell}
\left|
\frac{1}{N_\ell}\sum_{i=1}^{N_\ell} f(Z_i)
-
E f(Z_1)
\right|
>
\epsilon
\right\},
\qquad
B_m(\epsilon)
:=
\bigcup_{N_\ell \ge m} A_{N_\ell}(\epsilon).
\]
It suffices to show that
$
P(B_m(\epsilon)) \to 0
$
as $m\to\infty$, since this implies
\[
\sup_{f\in\mathcal{F}_\ell}
\left|
\frac{1}{N_\ell}\sum_{i=1}^{N_\ell} f(Z_i)
-
E f(Z_1)
\right|
\to 0
\quad\text{almost surely}.
\]

By assumption, $\mathcal{R}_{N_\ell}(\mathcal{F}_\ell)=o(1)$.
Hence there exists $\ell_0$ such that for all $\ell\ge \ell_0$,
\[
2\,\mathcal{R}_{N_\ell}(\mathcal{F}_\ell) < \epsilon/2.
\]
Let $\delta := \epsilon/2$.
Then, for all $\ell\ge \ell_0$,
\[
A_{N_\ell}(\epsilon)
\subseteq
\left\{
\sup_{f\in\mathcal{F}_\ell}
\left|
\frac{1}{N_\ell}\sum_{i=1}^{N_\ell} f(Z_i)
-
E f(Z_1)
\right|
>
2\mathcal{R}_{N_\ell}(\mathcal{F}_\ell)+\delta
\right\}.
\]

By Theorem~4.10 of \cite{Wainwright_2019}, since
$\|f\|_{L_\infty(P)}\le b$ for all $f\in\mathcal{F}_\ell$,
\[
P\!\left(
\sup_{f\in\mathcal{F}_\ell}
\left|
\frac{1}{N_\ell}\sum_{i=1}^{N_\ell} f(Z_i)
-
E f(Z_1)
\right|
>
2\mathcal{R}_{N_\ell}(\mathcal{F}_\ell)+\delta
\right)
\le
\exp\!\left(-\frac{N_\ell \delta^2}{2b^2}\right).
\]

Therefore, for all $m$ sufficiently large,
\[
P(B_m(\epsilon))
\le
\sum_{N_\ell\ge m}
\exp\!\left(-\frac{N_\ell \delta^2}{2b^2}\right).
\]

Since $N_\ell\to\infty$, the right-hand side is the tail of a
convergent geometric series and hence converges to zero as
$m\to\infty$.
This proves the claim.
\end{proof}

\begin{lemma}[Expected Rademacher complexity of $\exp$ of convex mixtures]\label{Lemma - Rademacher complexity G}
Suppose Assumption~\ref{Assump -Primities OTP} holds. Then for each sample size $N_\ell$, the collection $\mathcal{G}_\ell$ in~(\ref{eq - Collection VC}) satisfies
\[
\mathcal{R}_{N_\ell}(\mathcal G_\ell)
\;\le\; \gamma\kappa\|c\|_{\infty}\,e^{\gamma\kappa\|c\|_{\infty}}\ \mathcal{R}_{N_\ell}\!\Big(\operatorname{conv}\{v_1,\ldots,v_{n_\ell}\}\Big)
\;=\;\gamma\kappa\|c\|_{\infty}\,e^{\gamma\kappa\|c\|_{\infty}}\mathcal{R}_{N_\ell}\!\big(\{v_1,\ldots,v_{n_\ell}\}\big).
\]
\end{lemma}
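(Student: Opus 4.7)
The plan is to combine a Lipschitz contraction for the exponential on the natural range of the linear combinations in $\mathcal{G}_\ell$ with the classical fact that Rademacher complexity is invariant under taking a convex hull of a finite set. I would first normalize the argument of the exponential: for $(\alpha,\mu)\in\overline{\mathcal{C}}_\ell$ and $\omega\in\Omega$, write $t_{\alpha,\mu}(\omega):=\alpha\sum_{i=1}^{n_\ell}\mu_iv_i(\omega)$. Each $v_i\in\mathcal{V}$ is bounded in absolute value by $1$ (the moment functions take values in $[-1,1]$ by construction~\eqref{eq - moment fxns}), $\mu$ is a probability vector, and $\alpha\leq L:=\gamma\kappa\|c\|_{\infty}$, so $|t_{\alpha,\mu}(\omega)|\leq L$ uniformly. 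Consequently $\mathcal{G}_\ell$ is the image of the class $\mathcal{H}_\ell:=\{t_{\alpha,\mu}:(\alpha,\mu)\in\overline{\mathcal{C}}_\ell\}$ under $\phi(t)=e^{t}$, which is $e^{L}$-Lipschitz on $[-L,L]$.

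Next, I would apply the Ledoux--Talagrand contraction inequality (using the convex, monotone form of $\phi$ on the bounded interval) to transfer the Rademacher complexity through $\phi$:
\[
\mathcal{R}_{N_\ell}(\mathcal{G}_\ell)\;\leq\;e^{L}\,\mathcal{R}_{N_\ell}(\mathcal{H}_\ell).
\]
To finish, I would compute $\mathcal{R}_{N_\ell}(\mathcal{H}_\ell)$ by extreme-point reduction. Reparameterizing $\lambda_i:=\alpha\mu_i$ recasts $\overline{\mathcal{C}}_\ell$ as the star-shaped polytope $\{\lambda\in\mathbb{R}_+^{n_\ell}:\sum_i\lambda_i\leq L\}$, whose extreme points are $0$ and $Le_i$ for $i=1,\ldots,n_\ell$. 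Since the functional $\lambda\mapsto N_\ell^{-1}\sum_{j=1}^{N_\ell}\varepsilon_j\sum_i\lambda_i v_i(Z_j)$ is linear in $\lambda$, its supremum in absolute value is attained at an extreme point, giving
\[
\sup_{(\alpha,\mu)\in\overline{\mathcal{C}}_\ell}\Bigl|N_\ell^{-1}\sum_{j=1}^{N_\ell}\varepsilon_j t_{\alpha,\mu}(Z_j)\Bigr|\;=\;L\max_{1\leq i\leq n_\ell}\Bigl|N_\ell^{-1}\sum_{j=1}^{N_\ell}\varepsilon_j v_i(Z_j)\Bigr|
\]
for every realization of $(\varepsilon,Z)$. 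Taking expectations yields $\mathcal{R}_{N_\ell}(\mathcal{H}_\ell)=L\,\mathcal{R}_{N_\ell}(\{v_1,\ldots,v_{n_\ell}\})$, and the same extreme-point argument (applied with $L=1$ on the standard simplex) produces the equality $\mathcal{R}_{N_\ell}(\operatorname{conv}\{v_1,\ldots,v_{n_\ell}\})=\mathcal{R}_{N_\ell}(\{v_1,\ldots,v_{n_\ell}\})$ asserted in the statement. Combining the contraction step with this calculation delivers the constant $L\,e^{L}=\gamma\kappa\|c\|_{\infty}\,e^{\gamma\kappa\|c\|_{\infty}}$.

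The delicate step is the contraction inequality: the standard absolute-value form of Ledoux--Talagrand would introduce a spurious factor of $2$, so one must either invoke the version adapted to convex, nondecreasing contractions or pre-center $\mathcal{G}_\ell$ at the constant function $1\in\mathcal{G}_\ell$ (attained at $\alpha=0$) and use the translation-invariance of the Rademacher average along with $\psi(t)=e^{t}-1$, which satisfies $\psi(0)=0$ and is $e^{L}$-Lipschitz on $[-L,L]$. Either route recovers the sharp constant $e^{L}$ without inflation.
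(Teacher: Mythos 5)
Your proposal is correct and mirrors the paper's argument: both pass the Rademacher average through an $e^{L}$-Lipschitz contraction by centering at the constant (using $\psi(t)=e^{t}-1$ with $\psi(0)=0$ and translation invariance), and both finish with the extreme-point fact that the Rademacher complexity of the convex hull equals that of the finite dictionary. The only cosmetic difference is that you fold the scaling over $\alpha\in[0,L]$ and the convex-hull reduction into a single step via the reparametrization $\lambda_i=\alpha\mu_i$, whereas the paper treats them as two separate steps.
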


\begin{proof}
The proof proceeds by the direct method. To ease exposition, we introduce the following notation: 
$b=\gamma\kappa\|c\|_{\infty}$ and $\Delta_\ell=\{\mu\in\mathbb R_+^n:\sum_{i=1}^{n_\ell} \mu_i=1\}$. Furthermore, fix a sample $S=(\omega_1,\ldots,\omega_{N_\ell})$ and write $s_\mu:=\sum_{i=1}^{n_\ell} \mu_i v_i$.
Since $v_i\in[-1,1]$ for each $i$ and $\mu\in\Delta_\ell$, $s_\mu\in[-1,1]$, hence for $\alpha\in[0,b]$, $\alpha s_\mu\in[-b,b]$.

\emph{(i) Scaling.} Let $\mathbb S:=\{s_\mu:\mu\in\Delta_\ell\}$ and $\mathcal A:=\{\alpha s:\alpha\in[0,b],\,s\in\mathbb S\}$.
For this fixed $S$,
\[
\mathfrak R_S(\mathcal A)
=\mathbb E_{P_\epsilon^{\otimes_{N_\ell}}}\Big[\sup_{\alpha\in[0,b],\,s\in\mathbb S}\frac{\alpha}{N_\ell}\sum_{j=1}^{N_\ell}\epsilon_j s(\omega_j)\Big]
=b\,\mathfrak R_S(\mathbb S).
\]

\emph{(ii) Contraction.} Define $\phi(u):=e^{u}-1$.
On $[-b,b]$, $\phi$ is $e^{b}$-Lipschitz and $\phi(0)=0$.
Talagrand's contraction inequality (\citealp{LedouxTalagrand1991}, Theorem~4.12) yields
\[
\mathfrak R_S\big(\{\,\phi\circ h: h\in\mathcal A\,\}\big)\ \le\ e^{b}\,\mathfrak R_S(\mathcal A).
\]
Since adding a constant does not change $\mathfrak R_S$, we have
\[
\mathfrak R_S(\mathcal G_\ell)
=\mathfrak R_S\big(\{e^{\alpha s}\}\big)
=\mathfrak R_S\big(\{e^{\alpha s}-1\}\big)
\le e^{b}\,\mathfrak R_S(\mathcal A)
= b\,e^{b}\,\mathfrak R_S(\mathbb S).
\]

\emph{(iii) Convex hull invariance.} For fixed $S$,
\[
\sup_{s\in\operatorname{conv}\{v_i\}}\sum_{j=1}^{N_\ell} \epsilon_j s(\omega_j)
=\max_{1\le i\le n_\ell}\sum_{j=1}^{N_\ell} \epsilon_j v_i(\omega_j),
\]
so $\mathfrak R_S(\mathbb S)=\mathfrak R_S(\{v_1,\ldots,v_{n_\ell}\})$.
Combining (i)-(iii) gives, pointwise in $S$,
\[
\mathfrak R_S(\mathcal G_\ell)\ \le\ b e^{b}\, \mathfrak R_S(\{v_1,\ldots,v_{n_\ell}\}).
\]
Taking expectation over $S\sim P^{\otimes_{N_\ell}}$ yields the desired inequality for $\mathcal{R}_{N_\ell}(\mathcal G_\ell)$.
\end{proof}

\begin{corollary}[Finite dictionary]\label{Corollary - Rademacher complexity}
For any $N_\ell\ge1$,
\[
\mathcal R_{N_\ell}\!\big(\{v_1,\ldots,v_{n_\ell}\}\big)
\ \le\ \sqrt{\frac{2\log n_\ell}{N_\ell}}.
\]
\end{corollary}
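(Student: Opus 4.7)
The plan is to apply Massart's finite-class lemma (\citealp{Massart2000}) conditionally on the sample. I would condition on a realization $S=(\omega_1,\ldots,\omega_{N_\ell})$ and, for each $i\in\{1,\ldots,n_\ell\}$, form the vector $a_i:=(v_i(\omega_1),\ldots,v_i(\omega_{N_\ell}))\in\mathbb{R}^{N_\ell}$. Every element of $\mathcal{V}$ takes values in $[-1,1]$, because it has the form $F_X(x^\prime)-1[x\preceq x^\prime]$ or $F_Y(y^\prime)-1[y\preceq y^\prime]$ (or the negative of one of these), and a cumulative distribution function is bounded by $1$. Hence $\|a_i\|_2\le\sqrt{N_\ell}$ uniformly in $i$.

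Massart's finite-class lemma then yields, for this fixed $S$,
\[
E_{\varepsilon}\!\left[\max_{1\le i\le n_\ell}\sum_{j=1}^{N_\ell}\varepsilon_j v_i(\omega_j)\right]\ \le\ \sqrt{N_\ell}\,\sqrt{2\log n_\ell}.
\]
Because $\mathcal{V}=\{g_h,-g_h:h\in\mathcal{X}\cup\mathcal{Y}\}$ is symmetric under negation, the partition $\{E_{i,\ell}\}_{i=1}^{n_\ell}$ of $\mathcal{V}$ underlying the sieve may be chosen so that the dictionary $\{v_1,\ldots,v_{n_\ell}\}$ is itself closed under sign change; combined with the sign-symmetry of the Rademacher distribution, this converts the absolute-value supremum appearing in the definition of $\mathcal{R}_{N_\ell}$ into the one-sided maximum bounded above. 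Dividing by $N_\ell$ and then taking expectation over $S\sim P^{\otimes N_\ell}$ produces the claimed inequality.

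The only substantive point is this reduction from the absolute-value supremum to the one-sided maximum. If one prefers not to rely on a symmetric choice of partition representatives, the same argument applied to the enlarged dictionary $\{v_i\}\cup\{-v_i\}$ of size $2n_\ell$ replaces $\log n_\ell$ by $\log(2n_\ell)$; this harmless inflation does not affect any downstream statement in Theorem~\ref{Thm Stochastic Opt Sieve} or in Lemma~\ref{Lemma - Rademacher complexity G}, so no genuine obstacle arises. All other steps—boundedness of $v_i$, conditioning, and Massart's inequality—are routine.
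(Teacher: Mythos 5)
Your proposal is correct and takes the same route as the paper: condition on the sample, apply Massart's finite-class lemma to the vectors $(v_i(\omega_1),\ldots,v_i(\omega_{N_\ell}))$ with $\ell_2$-radius $\sqrt{N_\ell}$, divide by $N_\ell$, and average over $S\sim R_\gamma^{\otimes N_\ell}$. You are in fact more careful than the paper's one-line proof---your observation that the absolute value inside the definition of $\mathcal{R}_{N_\ell}$ requires either a sign-symmetric dictionary (which the structure $\mathcal{V}=-\mathcal{V}$ readily supplies) or a harmless doubling to $\log(2n_\ell)$ is a real detail that the paper's proof silently elides.
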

\begin{proof}
Let $\mathcal V_\ell=\{v_1,\dots,v_{n_\ell}\}$ be a finite class of measurable
functions satisfying $|v_i(\omega)|\le 1$ for all $\omega$ and all
$i=1,\dots,n_\ell$.
Recall that the (expected) Rademacher complexity is
\[
\mathcal R_{N_\ell}(\mathcal V_\ell)
=
E_{\left(R_\gamma\otimes P_\sigma\right)^{\otimes_{N_\ell}}}\!\left[
\sup_{v\in\mathcal V_\ell}
\frac{1}{N_\ell}\sum_{j=1}^{N_\ell}\sigma_j v(\omega_j)
\right],
\]
where $S=(\omega_1,\dots,\omega_{N_\ell})$ is an i.i.d.\ sample and
$\sigma_1,\dots,\sigma_{N_\ell}$ are independent Rademacher variables.

Fix an arbitrary sample $S$.  For each $v_i\in\mathcal V_\ell$, define the
vector
\[
z_i
:=
\big(v_i(\omega_1),\dots,v_i(\omega_{N_\ell})\big)
\in\mathbb R^{N_\ell},
\]
and let
\[
A_S := \{z_1,\dots,z_{n_\ell}\}\subset\mathbb R^{N_\ell}.
\]
Then
\[
\sup_{v\in\mathcal V_\ell}
\frac{1}{N_\ell}\sum_{j=1}^{N_\ell}\sigma_j v(\omega_j)
=
\frac{1}{N_\ell}
\sup_{z\in A_S}\sum_{j=1}^{N_\ell}\sigma_j z_j .
\]

Since $|v_i(\omega)|\le 1$, we have
\[
\|z_i\|_2^2
=
\sum_{j=1}^{N_\ell} v_i(\omega_j)^2
\le
N_\ell,
\qquad i=1,\dots,n_\ell,
\]
and therefore
\[
r_S := \max_{z\in A_S}\|z\|_2 \le \sqrt{N_\ell}.
\]

Applying Massart’s lemma (Theorem~3.3 in~\citealp{MohriRostamizadehTalwalkar2018})
conditionally on $S$ yields
\[
E_{P_\sigma^{\otimes_{N_\ell}}}\!\left[
\frac{1}{N_\ell}
\sup_{z\in A_S}\sum_{j=1}^{N_\ell}\sigma_j z_j
\right]
\le
\frac{r_S}{N_\ell}\sqrt{2\log|A_S|}
\le
\sqrt{\frac{2\log(n_\ell)}{N_\ell}} .
\]
Finally, taking expectation with respect to $S$ proves that
\[
\mathcal R_{N_\ell}(\mathcal V_\ell)
\le
\sqrt{\frac{2\log(n_\ell)}{N_\ell}},
\]
as claimed.
\end{proof}


\begin{lemma}\label{Lemma - LB on Dual value}
Suppose that Assumption~1 holds, and let $\bar{c}=\int_\Omega c(\omega)\,\,d(P_X\otimes P_Y)$. Then $\vartheta^*> e^{-(\gamma \bar{c}+\log a_\gamma)}$.
\end{lemma}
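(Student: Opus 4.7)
The plan is to exploit the trivial fact that the product measure $P_X\otimes P_Y$ is always a feasible coupling in $\Pi(P_X,P_Y)$, evaluate $H(P_X\otimes P_Y\mid R_\gamma)$ in closed form, and then invoke the optimality of $P_\gamma$ together with the strong duality identity $m(p_\gamma)=-\log\vartheta^*$ in~\eqref{eq - strong duality} to read off the claimed lower bound on $\vartheta^*$.

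First I would observe that since $dR_\gamma/d(P_X\otimes P_Y)=e^{-\gamma c}/a_\gamma$ is strictly positive on $\Omega$ under Assumption~\ref{Assump -Primities OTP}, the two measures are mutually absolutely continuous with $d(P_X\otimes P_Y)/dR_\gamma=a_\gamma e^{\gamma c}$. A direct computation then yields
\[
H(P_X\otimes P_Y\mid R_\gamma)=\int_{\Omega}\bigl[\log a_\gamma+\gamma c(\omega)\bigr]\,d(P_X\otimes P_Y)(\omega)=\log a_\gamma+\gamma\bar c,
\]
which is finite and non-negative, the latter by Jensen's inequality $a_\gamma\ge e^{-\gamma\bar c}$ (as it must be for a relative entropy). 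Writing $q:=d(P_X\otimes P_Y)/dR_\gamma$, one checks that $q\in\mathcal{M}$: since $P_X\otimes P_Y$ satisfies the marginal constraints exactly, direct substitution of the moment functions~\eqref{eq - moment fxns} gives $\int_\Omega v\,q\,dR_\gamma=\int_\Omega v\,d(P_X\otimes P_Y)=0$ for every $v\in\mathcal V$, so the defining constraints of $\mathcal M$ in~\eqref{eq - constraint set M} hold. Moreover, $m(q)=\log a_\gamma+\gamma\bar c<\infty$.

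Second, Part~1 of Proposition~\ref{Thm - Fenchel} identifies $p_\gamma$ as the unique (up to equivalence class) minimizer of $m$ over $\mathcal M$. Feasibility of $q$ therefore yields $m(p_\gamma)\le m(q)=\log a_\gamma+\gamma\bar c$, with equality only when $p_\gamma=q$ a.s.$-R_\gamma$, i.e., when $P_\gamma=P_X\otimes P_Y$. Under Assumption~\ref{Assump -Primities OTP} this degenerate case reduces to $c$ being separable up to an additive constant (since the I-projection of $R_\gamma$ onto $\Pi(P_X,P_Y)$ equals the product of its marginals only when $R_\gamma$ itself is a product measure), so for any non-separable $c$ the inequality is strict. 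Combining with the strong duality relation $-\log\vartheta^*=m(p_\gamma)$ gives $-\log\vartheta^*<\log a_\gamma+\gamma\bar c$, or equivalently $\vartheta^*>e^{-(\gamma\bar c+\log a_\gamma)}$.

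The only delicate point is the strictness, which fails in the degenerate separable case. This is immaterial for the downstream use of the lemma: the proof of Theorem~\ref{Thm Stochastic Opt Sieve} invokes the bound only through $\vartheta^*\ge e^{-(\gamma\bar c+\log a_\gamma)}$ to obtain the uniform lower bound $\vartheta_\ell\ge e^{-\gamma\kappa\|c\|_\infty}$ on the sieve value, so the non-strict version of the same argument suffices for all applications. The strict form stated in the lemma holds in any non-trivial (non-separable-cost) setting, which is where entropic optimal transport is of substantive interest.
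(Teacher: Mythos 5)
Your proof is correct and rests on the same pivot as the paper's: compare $H(P_\gamma\mid R_\gamma)$ to $H(P_X\otimes P_Y\mid R_\gamma)=\gamma\bar c+\log a_\gamma$, then translate via duality $m(p_\gamma)=-\log\vartheta^*$. The difference is in how you certify the key inequality. You use bare feasibility and optimality ($q=d(P_X\otimes P_Y)/dR_\gamma\in\mathcal M$, hence $m(p_\gamma)\le m(q)$), which is elementary and self-contained; the paper instead invokes Csisz\'ar's Pythagorean identity (his Theorem~2.2), obtaining the quantitatively stronger statement $H(P_\gamma\mid R_\gamma)\le H(P_X\otimes P_Y\mid R_\gamma)-H(P_X\otimes P_Y\mid P_\gamma)$, i.e., the gap is exactly the relative entropy $H(P_X\otimes P_Y\mid P_\gamma)$. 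That sharper form is not used downstream, so your lighter argument is adequate.

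On strictness: both arguments face the same degenerate boundary case (the paper asserts $H(P_X\otimes P_Y\mid P_\gamma)>0$ without comment, which fails precisely when $P_\gamma=P_X\otimes P_Y$, i.e., when $R_\gamma$ is a product measure / $c$ is additively separable). You flag this explicitly and correctly observe that the lemma's downstream uses (Part~2 of Proposition~\ref{Thm - Computation} and Part~2 of Theorem~\ref{Thm Stochastic Opt Sieve}) invoke only $\vartheta^*\ge e^{-(\gamma\bar c+\log a_\gamma)}$, so the non-strict version suffices. This is a fair and accurate reading; if anything, your writeup is more candid about the boundary case than the paper's own proof.
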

\begin{proof}
The proof proceeds by the direct method. Under Assumption~1, I can apply Theorem~2.2 in~\cite{csiszar1975} in our context with respect to the product measure $P_X\otimes P_Y$ to obtain
$H(P_\gamma|R_\gamma)\leq H(P_X\otimes P_Y|R_\gamma)- H(P_X\otimes P_Y|P_\gamma)$. Consequently, $H(P_\gamma|R_\gamma)< H(P_X\otimes P_Y|R_\gamma)$, holds, since $H(P_X\otimes P_Y|P_\gamma)>0$.

\par Noting that $H(P_X\otimes P_Y|R_\gamma)=\int_{\Omega}\log\left[\frac{dR_\gamma}{d(P_X\otimes P_Y)}(\omega)\right]^{-1}d(P_X\otimes P_Y)=\gamma \bar{c}+\log a_\gamma$, 
the above inequality becomes $H(P_\gamma|R_\gamma)< \gamma \bar{c}+\log a_\gamma$. Finally, I can combine this result with the fact that $H(P_\gamma|R_\gamma)=-\log(\vartheta^*)$ holds to 
deduce the desired result: $\vartheta^*> e^{-(\gamma \bar{c}+\log a_\gamma)}$.
\end{proof}

\begin{lemma}[VC-Dimension]\label{Lemma VC class}
Suppose that Assumption~\ref{Assump -Primities OTP} holds. The collection $\mathcal{G}_\ell$ in~(\ref{eq - Collection VC}) is a Vapnik-Chervonenkis (VC) class of functions with VC dimension $V_\ell$ bounded by $n_\ell+2$, for each $\ell$.
\end{lemma}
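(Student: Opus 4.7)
The plan is to recognize $\mathcal{G}_\ell$ as the image of a finite-dimensional vector space of functions under the strictly monotone transformation $\exp$, and then invoke two standard VC-subgraph preservation results from \cite{VDV-W}.

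First, I would rewrite each element of $\mathcal{G}_\ell$ as
\[
G_\ell(\omega;\alpha,\mu)=\exp\!\bigl(h_{\alpha,\mu}(\omega)\bigr),\qquad h_{\alpha,\mu}(\omega):=\alpha\sum_{i=1}^{n_\ell}\mu_i v_i(\omega),
\]
and observe that as $(\alpha,\mu)$ ranges over $\overline{\mathcal{C}}_\ell$, the exponent $h_{\alpha,\mu}$ ranges over a subset $H_\ell$ of the linear span $V_\ell:=\mathrm{span}\{v_1,\ldots,v_{n_\ell}\}$, since $\alpha\mu_i\ge 0$ gives a nonnegative linear combination. Thus $\dim V_\ell\le n_\ell$ and $H_\ell\subset V_\ell$.

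Second, I would apply Lemma~2.6.15 of~\cite{VDV-W}, which states that any finite-dimensional vector space of real-valued measurable functions of dimension $k$ is a VC-subgraph class with VC index at most $k+2$. Applied to $V_\ell$, this gives $V(V_\ell)\le n_\ell+2$, and since $H_\ell\subset V_\ell$, the class $H_\ell$ inherits the same bound on its VC-subgraph index.

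Third, I would invoke Part~(viii) of Lemma~2.6.18 of~\cite{VDV-W}: if $\mathcal{F}$ is a VC-subgraph class and $\phi:\mathbb{R}\to\mathbb{R}$ is monotone, then $\phi\circ\mathcal{F}:=\{\phi\circ f:f\in\mathcal{F}\}$ is also a VC-subgraph class whose VC index is no larger than that of $\mathcal{F}$. Since $\exp$ is strictly increasing on $\mathbb{R}$, taking $\mathcal{F}=H_\ell$ and $\phi=\exp$ yields that $\mathcal{G}_\ell=\exp\circ H_\ell$ is VC-subgraph with VC index at most $n_\ell+2$, as claimed.

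There is really no obstacle here beyond verifying the correct statements of the two cited lemmas; the argument is a direct concatenation of (a) the exponent lives in a vector space of dimension $\le n_\ell$, (b) such spaces are VC-subgraph with index $\le \dim+2$, and (c) monotone postcomposition preserves the VC-subgraph property. The nonnegativity constraint on the coefficients $\alpha\mu_i$ and the constraint $\sum_i\mu_i=1$ play no role beyond confirming that $H_\ell$ is a subset of the ambient vector space, since passage to a subset can only decrease the VC index.
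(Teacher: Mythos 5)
Your proof is correct and follows essentially the same route as the paper: bound the linear span of $\{v_1,\ldots,v_{n_\ell}\}$ by Lemma~2.6.15 of \cite{VDV-W} to get VC index at most $n_\ell+2$, pass to the subset generated by the nonnegative constrained coefficients, and then compose with the strictly monotone map $\exp$ via Part~(viii) of Lemma~2.6.18. The only cosmetic difference is that the paper exponentiates the full restricted linear class before observing $\mathcal{G}_\ell$ is a subset of the result, whereas you restrict the exponent class first and then exponentiate; these are equivalent, and your observation that the simplex and nonnegativity constraints are immaterial beyond guaranteeing containment in the span is exactly the point the paper makes as well.
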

\begin{proof}
The proof proceeds by the direct method. For each $n_\ell\in\mathbb{Z}_+$, Lemma~2.6.15 in~\cite{VDV-W} establishes $\text{span}\left(v_1,\ldots,v_{n_\ell}\right)$ is a VC class of functions with VC-dimension bounded above by $n_\ell+2$. Now because $$\left\{\alpha\sum_{i=1}^{n_\ell}\mu_i\,v_i:(\alpha,\mu)\in\mathcal{C}_\ell\right\}\subset \text{span}\left(v_1,\ldots,v_{n_\ell}\right),$$ the class $\left\{\alpha\sum_{i=1}^{n_\ell}\mu_i\,v_i:(\alpha,\mu)\in\mathcal{C}_\ell\right\}$ must also be VC whose VC-dimension is also bounded from above by $n_\ell+2$. I can apply Part (viii) of Lemma~2.6.18 in~\cite{VDV-W} to this last class with exponential function, which is strictly monotonic, to determine that the resulting class is also VC. Since the exponential function is strictly monotonic, and hence, one-to-one, the VC-dimension of this transformed class has the same VC-dimension. Finally, because $\mathcal{G}_\ell\subset \left\{\alpha\sum_{i=1}^{n_\ell}\mu_i\,v_i:(\alpha,\mu)\in\mathcal{C}_\ell\right\}$, it must be that $V_\ell\leq n_\ell+2$. This concludes the proof.
\end{proof}

\begin{lemma}[Gaussian Approximation]\label{lem:gauss-approx-inf}
Let $\mathcal{G}_\ell$ be defined as in~(\ref{eq - Collection VC}), and let $\omega_1,\dots,\omega_{N_\ell}$ be i.i.d.\ with law $R_\gamma$, and define the empirical process
\[
\mathbb{G}_{N_\ell} g
:= \frac{1}{\sqrt{{N_\ell}}} \sum_{i=1}^{N_\ell} \big(g(\omega_i) - E_{R_\gamma}\left[g(\omega)\right]\big),
\qquad g\in\mathcal{G}_{\ell},
\]
Consider the supremum statistic
$Z_\ell=\sup_{g\in\mathcal{G}_{\ell}} \mathbb{G}_{N_\ell} g$. Let $B_\ell$ be a centered tight Gaussian process indexed by $\mathcal{G}_{\ell}$ with covariance function $E_{R_\gamma}\left[B_{\ell}(g)\,B_{\ell}(g^\prime)\right]=\int_{\Omega}g(\omega)\,g^\prime(\omega)\,d R_\gamma$, for $g,g^\prime\in\mathcal{G}_\ell$. Then there exists a random variable
$\tilde Z_{N_\ell} \stackrel{d}{=} \sup_{g\in\mathcal{G}_{\ell}} B_\ell(g)$
such that $|Z_\ell - \tilde Z_\ell|= O_{R_\gamma}\left(r_\ell\right)$,
where
\[
r_\ell
:= C\Big[
b K_{N_\ell}\, {N_\ell}^{-1/2}
\;+\; (b\sigma)^{1/2} K_{N_\ell}^{3/4} {N_\ell}^{-1/4}
\;+\; (b \sigma^2 K_{N_\ell}^2)^{1/3} {N_\ell}^{-1/6}
\Big],
\]
\[
K_{N_\ell} := c\,v\max\Big(\log N_\ell, \log\Big(\frac{A b}{\sigma}\Big)\Big),
\]
with $C,c>0$ are constants \emph{not} depending on $N_\ell$, 
$b=\sigma=e^{\gamma\kappa\|c\|_{\infty}},$ $v=n_\ell+2$, and $A=\left(K(n_\ell+2)\right)^{\frac{1}{{n_\ell+2}}}16e$ with $K$ a universal constant.
\end{lemma}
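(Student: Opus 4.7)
The plan is to reduce the statement to a direct application of the Gaussian approximation theorem for suprema of empirical processes indexed by uniformly bounded VC-type classes in~\cite{chernozhukov2014gaussian}. The three-term rate $r_\ell$ and the logarithmic factor $K_{N_\ell}$ in the statement match exactly the bound produced by that theorem, so the task reduces to verifying its hypotheses for $\mathcal{G}_\ell$ with the constants $(b,\sigma,v,A)$ declared in the lemma.

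First I would record the structural properties of $\mathcal{G}_\ell$ required as inputs. By construction every $g\in\mathcal{G}_\ell$ is uniformly bounded in absolute value by $b=e^{\gamma\kappa\|c\|_\infty}$: on $\overline{\mathcal{C}}_\ell$ one has $\alpha\le\gamma\kappa\|c\|_\infty$, $\mu$ is a probability vector, and $v_i\in[-1,1]$. Thus the constant function $F\equiv b$ serves as an envelope, $\mathrm{Var}_{R_\gamma}(g)\le b^2$ for every $g\in\mathcal{G}_\ell$, and we may take $\sigma=b$. Pointwise measurability follows from Remark~\ref{Remark - Measurability SAA}: the parameter set $\overline{\mathcal{C}}_\ell\subset\mathbb{R}^{n_\ell+1}$ is separable and $(\alpha,\mu)\mapsto G_\ell(\omega;\alpha,\mu)$ is continuous, so $\mathcal{G}_\ell$ admits a countable dense subclass.

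Next I would establish the uniform entropy bound. Lemma~\ref{Lemma VC class} gives that $\mathcal{G}_\ell$ is VC-subgraph of dimension at most $v=n_\ell+2$, and Theorem~2.6.7 of~\cite{VDV-W} then yields
\[
\sup_Q N\!\bigl(\epsilon\,b,\mathcal{G}_\ell,L_2(Q)\bigr)\le K\,v\,(16e)^{v}\,\epsilon^{-2(v-1)},\qquad \epsilon\in(0,1],
\]
where the supremum is over all finitely supported probability measures $Q$ on $\Omega$ and $K$ is the universal constant of that theorem. Rewriting this bound in the CCK normalization $\sup_Q N(\epsilon\|F\|_{Q,2},\mathcal{G}_\ell,L_2(Q))\le (A/\epsilon)^v$ yields $A=(Kv)^{1/v}\cdot 16e$, precisely the constant in the statement.

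With these inputs in hand, I would invoke the main coupling theorem of~\cite{chernozhukov2014gaussian}. On a sufficiently enriched probability space, their result produces a random variable $\tilde Z_\ell$ with the law of $\sup_{g\in\mathcal{G}_\ell}B_\ell(g)$ such that the coupling error $|Z_\ell-\tilde Z_\ell|$ is dominated, up to an absolute constant $C$, by the three-term rate $r_\ell$ with $K_{N_\ell}=c\,v\,\max(\log N_\ell,\log(Ab/\sigma))$ obtained by substituting the constants computed above. Converting this bound to the stochastic-order statement $|Z_\ell-\tilde Z_\ell|=O_{R_\gamma}(r_\ell)$ is immediate. The only genuine obstacle is bookkeeping: matching the covering-number estimate of Theorem~2.6.7 in~\cite{VDV-W} to the $(A,v)$-normalization used by CCK so that both the leading constant $A$ and the logarithmic factor $K_{N_\ell}$ line up with the form given in the lemma. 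The probabilistic core---a Koml\'os--Major--Tusn\'ady-type coupling for suprema of empirical processes indexed by VC-type classes---is used as a direct quotation.
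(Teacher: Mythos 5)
Your proposal is correct and follows essentially the same route as the paper: verify the envelope bound $b=\sigma=e^{\gamma\kappa\|c\|_\infty}$, pointwise measurability, and the VC-type entropy bound with $v=n_\ell+2$ and $A=(K(n_\ell+2))^{1/(n_\ell+2)}16e$ from Lemma~\ref{Lemma VC class} and Theorem~2.6.7 of~\cite{VDV-W}, then invoke Corollary~2.2 of~\cite{chernozhukov2014gaussian} (the VC-type specialization of their coupling theorem, with $q=\infty$ since the class is uniformly bounded) to obtain $\tilde Z_\ell$ and the three-term rate $r_\ell$. Your pointwise-measurability justification via the Carath\'eodory continuity of $G_\ell$ and separability of $\overline{\mathcal{C}}_\ell$ is if anything a cleaner argument than the paper's terse remark.
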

\begin{proof}
The result follows by applying Corollary~2.2 in~\cite{chernozhukov2014gaussian} in our setup. Towards that end, I shall verify the conditions for its application.

\par The class of functions $\mathcal{G}_\ell$ is trivially pointwise measurable, as it consists of functions that are indicator functions on upper sets plus a constant. By Lemma~\ref{Lemma VC class}, this class is also VC with VC-dimension bounded from above by $n_\ell+2$, and it has the constant function, $\omega\mapsto e^{\kappa\gamma \|c\|_{\infty}}$, as its envelope function. This class of functions is also compatible with Definition~2.1 in \cite{chernozhukov2014gaussian}: by Theorem~2.6.7 in~\cite{VDV-W} and Lemma~\ref{Lemma VC class}, I can specify $v$ and $A$ in their definition as $v=n_\ell+2$ and $A=\left(K(n_\ell+2)\right)^{\frac{1}{{n_\ell+2}}}16e$ with $K$ a universal constant. Since the envelope function is a constant function, I can also specify $b$ and $\sigma$ in their corollary as $b=\sigma=e^{\gamma\kappa\|c\|_{\infty}}$. Finally, I can set $q=\infty$ in their corollary to obtain the desired result because $\mathcal{G}_\ell$ is uniformly bounded.
\end{proof}

\section{Partition of $\mathcal{V}$ Independent of $R_\gamma$}\label{Section - Partition of V}
\begin{lemma}\label{lem:V-partition-epsilon-net}
Recall that $\mathcal{V}$ is a uniformly bounded VC-subgraph class of measurable
functions on the measurable space $\left(\Omega,\mathcal{B}\left(\mathcal{X}\times\mathcal{Y}\right)\right)$, with envelope function, $F(\omega)=1$ for all $\omega\in\Omega$. There exist constants
$C,p > 0$, depending only on the VC characteristics of $\mathcal{V}$, such that the following holds.

For every $\epsilon_\ell \in (0,1)$ and every probability measure $Q$ on
$\left(\Omega,\mathcal{B}\left(\mathcal{X}\times\mathcal{Y}\right)\right)$ there exists a finite subset
$\{v_{1,\ell},\dots,v_{n_\ell,\ell}\} \subset \mathcal{V}$ with
\begin{equation}\label{eq:V-epsilon-net}
  \sup_{v \in \mathcal{V}} \min_{1 \leq i \leq n_\ell}
  \|v - v_{i,\ell}\|_{L_1(Q)} \;\leq\; \epsilon_\ell
\end{equation}
and
\begin{equation}\label{eq:V-nl-polynomial}
  n_\ell \;\leq\; C\,\epsilon_\ell^{-p}.
\end{equation}
Moreover, defining
\[
  E_{i,\ell}
  \;\coloneqq\;
  \Bigl\{ v \in \mathcal{V} :
    \|v - v_{i,\ell}\|_{L_1(Q)}
    = \min_{1 \leq j \leq n_\ell} \|v - v_{j,\ell}\|_{L_1(Q)}
  \Bigr\},
  \qquad i=1,\dots,n_\ell,
\]
with any deterministic tie-breaking rule when the minimum is attained at
multiple indices, yields a partition $\{E_{i,\ell}\}_{i=1}^{n_\ell}$ of
$\mathcal{V}$ such that
\begin{equation}\label{eq:partition-approx-property}
  \sup_{v \in E_{i,\ell}}
  \|v - v_{i,\ell}\|_{L_1(Q)} \;\leq\; \epsilon_\ell,
  \qquad i=1,\dots,n_\ell.
\end{equation}

In particular, these conclusions hold uniformly in the choice of the reference
measure $Q = R_\gamma$ for any $\gamma > 0$.
\end{lemma}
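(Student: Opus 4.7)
The plan is to derive both the $\epsilon_\ell$-net and the partition directly from the uniform $L_1$-entropy bound for VC-subgraph classes, and then construct the partition by a Voronoi-type nearest-neighbor assignment with deterministic tie-breaking. The setup is already in place: Lemma~\ref{Lemma - Precomp} establishes that $\mathcal{V}$ is VC-subgraph, pointwise measurable, and has the constant envelope $F \equiv 1$. Since $F \equiv 1$ and $Q$ is a probability measure, $\|F\|_{L_1(Q)} = 1$ uniformly over $Q$, which is the key observation that converts a relative covering number bound into an absolute one with constants free of $Q$.

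First I would invoke Theorem~2.6.7 of~\cite{VDV-W} with $r=1$ and the envelope $F \equiv 1$. This yields, for a universal constant $K$ and $v = V(\mathcal{V})$,
\[
\sup_{Q}\, N\bigl(\epsilon,\, \mathcal{V},\, L_1(Q)\bigr)
\;\leq\; K\,v\,(16e)^{v}\,\epsilon^{-(v-1)}, \qquad \epsilon \in (0,1),
\]
where the supremum ranges over all probability measures on $\left(\Omega,\mathcal{B}(\mathcal{X}\times\mathcal{Y})\right)$. Specializing to $\epsilon = \epsilon_\ell$ and setting $C := K\,v\,(16e)^{v}$, $p := v-1$, we obtain a finite subset $\{v_{1,\ell},\dots,v_{n_\ell,\ell}\} \subset \mathcal{V}$ forming an $\epsilon_\ell$-net of $\mathcal{V}$ in the $L_1(Q)$-norm with $n_\ell \leq C\,\epsilon_\ell^{-p}$. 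This simultaneously delivers~(\ref{eq:V-epsilon-net}) and~(\ref{eq:V-nl-polynomial}).

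Next I would construct the partition. Define $E_{i,\ell}$ as the set of $v \in \mathcal{V}$ for which $v_{i,\ell}$ is a nearest center in the $L_1(Q)$-norm, resolving ties by assigning $v$ to the cell of smallest index $i$ that attains the minimum. By construction, $\{E_{i,\ell}\}_{i=1}^{n_\ell}$ is a measurable partition of $\mathcal{V}$, and for every $v \in E_{i,\ell}$,
\[
\|v-v_{i,\ell}\|_{L_1(Q)}
\;=\; \min_{1\leq j \leq n_\ell} \|v-v_{j,\ell}\|_{L_1(Q)}
\;\leq\; \epsilon_\ell,
\]
where the final inequality uses the $\epsilon_\ell$-net property established above. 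This gives~(\ref{eq:partition-approx-property}).

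The crux of the argument, and the reason the conclusion is uniform in $Q$, is that the bound in Theorem~2.6.7 of~\cite{VDV-W} depends only on $V(\mathcal{V})$ and the envelope norm $\|F\|_{L_1(Q)}$, with the constants $K$, $v$ and the exponent $r(v-1)$ free of $Q$. Since $F \equiv 1$ normalizes $\|F\|_{L_1(Q)}=1$ for every probability measure $Q$, the constants $C$ and $p$ depend only on the VC characteristics of $\mathcal{V}$. In particular, specializing $Q = R_\gamma$ for any $\gamma > 0$ yields the final ``in particular'' clause. No real obstacle arises: the proof is essentially bookkeeping on top of the uniform entropy bound, and the only subtle point is ensuring the envelope is a constant so that the $\epsilon \|F\|_{L_1(Q)}$ radius in Theorem~2.6.7 collapses to an absolute radius $\epsilon$, independent of $\gamma$.
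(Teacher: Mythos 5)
Your proposal is correct and follows essentially the same strategy as the paper's proof: both rest on Theorem~2.6.7 of van der Vaart and Wellner, both exploit the constant envelope $F\equiv1$ to make the covering-number bound free of $Q$, and both assemble the partition by Voronoi (nearest-center) assignment with deterministic tie-breaking. The only notable difference is a small technical shortcut on your side: you invoke the $L_1(Q)$ form of Theorem~2.6.7 directly (taking $r=1$), which gives $n_\ell \le K v (16e)^v \epsilon_\ell^{-(v-1)}$ in one step, whereas the paper first applies the $L_2(Q)$ entropy bound and then passes to $L_1(Q)$ via the comparison $\|g\|_{L_1(Q)} \le \|g\|_{L_2(Q)}$ (which, incidentally, is a consequence of Jensen's inequality rather than of uniform boundedness as the paper states). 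Your direct route is a touch cleaner and yields a slightly smaller exponent $p = v-1$ rather than $2(v-1)$, though since the lemma only asserts the existence of \emph{some} constants $C,p>0$ depending on the VC characteristics, this difference is immaterial to the conclusion. One trivial cosmetic point: you construct an $\epsilon_\ell$-net directly whereas the paper covers at radius $\epsilon_\ell/2$ and then absorbs the factor~$2$ into $C$; both are fine.
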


\begin{proof}
Since $\mathcal{V}$ is a uniformly bounded VC--subgraph class with envelope
$F$ satisfying $\|F\|_\infty \leq 1$, standard entropy bounds for VC classes
(see, e.g., Theorem 2.6.7 in \citealp{VDV-W}) imply that there exist constants $A,B > 0$ and an
integer $v \geq 1$ (the VC index) such that, for all probability measures $Q$,
all $\delta \in (0,1)$ and all $r \in (0,1]$,
\begin{equation}\label{eq:VC-entropy-L2}
  \log N\bigl(\delta\,r,\mathcal{V},L_2(Q)\bigr)
  \;\leq\;
  A\,v\,\log\Bigl(\frac{B\,r}{\delta}\Bigr).
\end{equation}
In particular, taking $r = \|F\|_{L_2(Q)} \leq 1$, I obtain
\begin{equation}\label{eq:VC-entropy-L2-simplified}
  \log N\bigl(\delta,\mathcal{V},L_2(Q)\bigr)
  \;\leq\;
  C_1\,\log\Bigl(\frac{C_2}{\delta}\Bigr),
\end{equation}
for some constants $C_1,C_2>0$ depending only on $A,B,v$, but not on
$Q$.

By uniform boundedness, $\|g\|_{L_1(Q)} \leq \|g\|_{L_2(Q)}$ for all
$g \in \mathcal{V} - \mathcal{V}$ and all $Q$. Hence
\begin{equation}\label{eq:VC-entropy-L1}
  N\bigl(\delta,\mathcal{V},L_1(Q)\bigr)
  \;\leq\;
  N\bigl(\delta,\mathcal{V},L_2(Q)\bigr),
\end{equation}
and \eqref{eq:VC-entropy-L2-simplified} yields a corresponding $L_1(Q)$
entropy bound. In particular, for each $\epsilon_\ell \in (0,1)$ and each $Q$,
I may choose a minimal $L_1(Q)$-covering of $\mathcal{V}$ at radius
$\epsilon_\ell/2$, that is, a finite set
$\{v_{1,\ell},\dots,v_{n_\ell,\ell}\}\subset\mathcal{V}$ such that
\[
  \mathcal{V} \subset \bigcup_{i=1}^{n_\ell}
  B_{L_1(Q)}\bigl(v_{i,\ell},\epsilon_\ell/2\bigr),
\]
and $n_\ell = N(\epsilon_\ell/2,\mathcal{V},L_1(Q))$. By
\eqref{eq:VC-entropy-L1} and \eqref{eq:VC-entropy-L2-simplified}, there exist
constants $C,p>0$ depending only on the VC characteristics of $\mathcal{V}$
such that
\[
  n_\ell \;\leq\; C\,\epsilon_\ell^{-p},
\]
which is \eqref{eq:V-nl-polynomial}.

By the covering property, for every $v \in \mathcal{V}$ there exists
$i \in \{1,\dots,n_\ell\}$ such that
$\|v - v_{i,\ell}\|_{L_1(Q)} \leq \epsilon_\ell/2$. In particular,
\[
  \sup_{v \in \mathcal{V}} \min_{1 \leq i \leq n_\ell}
  \|v - v_{i,\ell}\|_{L_1(Q)} \;\leq\; \epsilon_\ell/2,
\]
and hence \eqref{eq:V-epsilon-net} holds with $\epsilon_\ell$ and a
possibly enlarged constant $C$ (absorbing the factor $2$). Defining the sets
$E_{i,\ell}$ by nearest-center assignment,
\[
  E_{i,\ell}
  \;\coloneqq\;
  \Bigl\{ v\in\mathcal{V} :
    \|v - v_{i,\ell}\|_{L_1(Q)}
    = \min_{1 \leq j \leq n_\ell} \|v - v_{j,\ell}\|_{L_1(Q)}
  \Bigr\},
\]
with deterministic tie-breaking, yields a partition
$\{E_{i,\ell}\}_{i=1}^{n_\ell}$ of $\mathcal{V}$, and the covering property
implies
\[
  \sup_{v \in E_{i,\ell}} \|v - v_{i,\ell}\|_{L_1(Q)}
  \;\leq\; \epsilon_\ell,
  \qquad i=1,\dots,n_\ell,
\]
which is \eqref{eq:partition-approx-property}. The entropy bound
\eqref{eq:VC-entropy-L2} is uniform over all probability measures $Q$, so the
constants $C,p$ in \eqref{eq:V-nl-polynomial} may be chosen independent of
$Q$. In particular, the conclusions hold uniformly for $Q = R_\gamma$ with
$\gamma>0$. This proves the lemma.
\end{proof}

\section{An Implementation and Numerical Illustration}\label{Section - Implementation}
This section presents a geometric implementation of the sieve M-estimation procedure, and illustrates its performance numerically relative to the empirical Sinkhorn divergence. 
\subsection{Implementation}
\par The approximation scheme of~\cite{Tabri-MOOR-2025} requires a specification of $E_{i,\ell}$ for $i=1,\ldots, n_\ell$ satisfying the accuracy~(\ref{eq - Disc Accuracy}), which is a partition of the set of moment functions $\overline{\mathcal{V}}$. His scheme also allows the practitioner to select any set of $v_i\in E_{i,\ell}$ for $i=1,\ldots,n_\ell$, as the results of Proposition~\ref{Thm - Computation} are uniform in their choice. This section puts forward a specification of said partition and an approach to selecting the $v_i\in E_{i,N}$ based on the fact that $B:=\left\{v\in\mathcal{V}: \int_{\Omega}v\,dP_\gamma=0\right\}$ coincides with $\mathcal{V}$.

\par To elucidate, first note that
$\mathcal{V}=\bigcup_{i=1}^4\mathcal{V}_i$, where
\begin{align*}
\mathcal{V}_1 & =\left\{\left(F_X(x^\prime)-1\left[x\preceq x^\prime\right]\right)1\left[y\in\mathcal{Y}\right],x^\prime\in\mathcal{X}\right\},\quad\mathcal{V}_2=\left\{\left(1\left[x\preceq x^\prime\right]-F_X(x^\prime)\right)1\left[y\in\mathcal{Y}\right],x^\prime\in\mathcal{X}\right\}\\
\mathcal{V}_3 & =\left\{\left(F_Y(y^\prime)-1\left[y\preceq y^\prime\right]\right)1\left[x\in\mathcal{X}\right],y^\prime\in\mathcal{Y}\right\},\quad \mathcal{V}_4 =\left\{\left(1\left[y\preceq y^\prime\right]-F_Y(y^\prime)\right)1\left[x\in\mathcal{X}\right],y^\prime\in\mathcal{Y}\right\}.
\end{align*}
In this notation, observe that $\mathcal{V}_2=-\mathcal{V}_1$ and $\mathcal{V}_4=-\mathcal{V}_3$. Now, given $\epsilon_\ell$, I can always find finite partitions of $\mathcal{X}$ and $\mathcal{Y}$, given by $\{\mathcal{X}_j,j\leq n_x\}$ and $\{\mathcal{Y}_j,j\leq n_y\}$, respectively, such that
\begin{align}
\left|F_X(x)-F_X(x^\prime)\right| & \leq \epsilon_\ell/8\quad\text{and}\quad \left|R_{\gamma}\left(x,\right)-R_{\gamma,X}(x^\prime)\right|\leq \epsilon_\ell/8\quad\forall x,x^\prime\in \mathcal{X}_j\quad\text{and}\\
\left|F_Y(y)-F_Y(y^\prime)\right| & \leq \epsilon_\ell/8\quad\text{and}\quad \left|R_{\gamma,Y}(y)-R_{\gamma,Y}(y^\prime)\right|\leq \epsilon_\ell/8\quad\forall y,y^\prime\in \mathcal{Y}_i,
\end{align}
hold, for $j=1,\ldots,n_x$ and $i=1,\ldots,n_y$. The notation $R_{\gamma,X}(\cdot)$ and $R_{\gamma,Y}(\cdot)$ denote the distribution functions of the $X$-margin and $Y$-margin, respectively, of the joint distribution $R_\gamma$.

\par The partition has two parts, once for $\mathcal{V}_1$ and $\mathcal{V}_2$, and one for $\mathcal{V}_3$ and $\mathcal{V}_4$. Consider the following sets
$E_{X,1,j}=\left\{\left(F_X(x^\prime)-1\left[x\preceq x^\prime\right]\right)1\left[y\in\mathcal{Y}\right],x^\prime\in\mathcal{X}_j\right\}$, $E_{X,2,j}=\left\{\left(1\left[x\preceq x^\prime\right]-F_X(x^\prime)\right)1\left[y\in\mathcal{Y}\right],x^\prime\in\mathcal{X}_j\right\}$,
$E_{Y,1,i}=\left\{\left(F_Y(y^\prime)-1\left[y\preceq y^\prime\right]\right)1\left[x\in\mathcal{X}\right],y^\prime\in\mathcal{Y}_i\right\}$, $E_{Y,2,i} =\left\{\left(1\left[y\preceq y^\prime\right]-F_Y(y^\prime)\right)1\left[x\in\mathcal{X}\right],y^\prime\in\mathcal{Y}_i\right\}$, for $j=1,\ldots,n_x$ and $i=1,\ldots,n_y$. Then, I have partitions of the disjoint sets that comprise $\mathcal{V}$ and satisfy the approximation's construction: $\mathcal{V}_1=\cup_{j=1}^{n_x}E_{X,1,j}$ and $\mathcal{V}_2 =\cup_{j=1}^{n_x}E_{X,2,j}$, $\mathcal{V}_2 =\cup_{j=1}^{n_x}E_{X,2,j}$, $\mathcal{V}_3 = \cup_{i=1}^{n_y}E_{Y,1,i}$, and $\mathcal{V}_4 = \cup_{i=1}^{n_y}E_{Y,2,i}$. In consequence, $n_\ell=2n_x+2n_y$.

\par Now I shall make use of the fact that $B=\mathcal{V}$, holds, in selecting the functions $v_i$ to implement a moment \emph{equality} using two such inequalities. This can be done as follows: for each selected $v\in E_{X,1,j}$ and $v^\prime\in E_{Y,1,i}$, select $-v\in E_{X,2,j}$ and $-v^\prime\in E_{Y,2,i}$, and do this for $j=1,\ldots,n_x$ and $i=1,\ldots,n_y$. In consequence, I have reduced the number of choice variables in the approximating Fenchel dual programs by re-writing it to recognize moment equality constraints. In this case and using the result of Proposition~\ref{prop - alpha UB}, the problem~(\ref{eq - finite program 1}) becomes
\begin{align}
\inf & \left\{\int_{\Omega}G_\ell(\omega,\tau)\,dR_\gamma(\omega):\tau\in[-1,1]^{n_x+n_y},\text{and}\,\sum_{i=1}^{n_x+n_y}\tau_i\in[-1,1] \right\}\quad\text{where}\label{eq - finite program reduced 1}\\
 G_\ell(\omega,\tau)& =\exp\left\{\gamma\|c\|_\infty\sum_{i=1}^{n_x+n_y}\tau_i\,v_i(\omega)\right\},\label{eq - finite program reduced 2}
\end{align}
reducing the number of choice variables from $2n_x+2n_y$ to $n_x+n_y$ and constraining them to the box $[-1,1]^{n_x+n_y}$. With this formulation $n_\ell=n_x+n_y$, and function class $$\left\{G_\ell(\cdot,\tau),\tau\in[-1,1]^{n_\ell}\,\text{and}\,\sum_{i=1}^{n_\ell}\tau_i\in[-1,1]\right\}$$ is still VC and uniformly bounded by $e^{\gamma\kappa\|c\|_\infty}$ for each $\ell$, so that the results of Theorem~\ref{Thm Stochastic Opt Sieve} apply to the SAA version of~(\ref{eq - finite program reduced 1}).

\par In practice, we will consider large values of $\gamma$ (e.g., $\gamma\geq100$), which can create numerical challenges in solving the SAA version of~(\ref{eq - finite program reduced 1}), 
\begin{align}\label{eq - SAA Comp unStable}
\inf\left\{\frac{1}{N_\ell}\sum_{j=1}G_\ell(\omega_j,\tau):\tau\in [-1,1]^{n_\ell}\,\text{and}\,\sum_{i=1}^{n_\ell}\tau_i\in[-1,1]\right\},
\end{align}
for a random sample of size $N_\ell$ from $R_\gamma$. A practical approach to addressing this challenge is to rescale the objective function in~(\ref{eq - SAA Comp unStable}) by dividing it by $e^{\gamma\kappa\|c\|_\infty}$, and then to apply logarithms to quash the impact of large $\gamma$ in the objective function~(\ref{eq - finite program reduced 2}), yielding 
\begin{align}\label{eq - SAA Comp Stable}
\inf\left\{-\gamma\kappa\|c\|_\infty+\log\frac{1}{N_\ell}\sum_{j=1}G_\ell(\omega_j,\tau):\tau\in [-1,1]^{n_\ell}\,\text{and}\,\sum_{i=1}^{n_\ell}\tau_i\in[-1,1]\right\},
\end{align}
Because the logarithm is strictly positive monotonic transformation, the solutions of the problems~(\ref{eq - SAA Comp Stable}) and~(\ref{eq - SAA Comp unStable}) coincide. Furthermore, rescaling the objective function in this way implies $$\left\{e^{-\gamma\kappa\|c\|_\infty}\,G_\ell(\cdot,\tau),\tau\in[-1,1]^{n_\ell}\,\text{and}\,\sum_{i=1}^{n_\ell}\tau_i\in[-1,1]\right\}$$ is still VC and now uniformly bounded by 1 for each $\ell$. Hence, to calculate $\hat{\vartheta}_\ell$, I must first add $\gamma\kappa\|c\|_\infty$ to the optimal value~(\ref{eq - SAA Comp Stable}) and then apply the exponential function to it. Finally, with this setup, I can implement the limit~(\ref{eq - entropy bound}) as
\begin{align}\label{eq - imp entropy limit}
\lim_{\ell\rightarrow\infty}\frac{\log n_\ell}{N_\ell}=0.
\end{align}

\subsection{Numerical Illustration}
This section considers a toy example that is useful for establishing proof of concept of my approximation scheme and its implementation. I specify $X\sim U[0,1]$ and $Y~\sim U[0,2]$, and $c(x,y)=\frac{1}{2}(x-y)^2$. This specification satisfies the conditions of Assumption~\ref{Assump -Primities OTP}.

\par In this setting, the optimal transport value is the squared Wasserstein distance between $P_X$ and $P_Y$ (\citealp{villani2009optimal}), which is given by
\begin{align}\label{eq - OTP 2-Wasser Uniforms}
W^2_2(P_X,P_Y)=\inf_{P\in\Pi(P_X,P_Y)}\frac{1}{2}\int_{\mathcal{X}\times\mathcal{Y}}(x-y)^2\,dP.
\end{align}
The value and the solution of~(\ref{eq - OTP 2-Wasser Uniforms}) can be calculated in closed-form. In particular, $W^2_2(P_X,P_Y)=1/6\approx 0.166$ and the solution is the singular distribution supported on $\{(x,y): y=2x\}$ having CDF
\begin{align}
F_{X,Y}(x,y)
= \begin{cases}
0, & x,y < 0, \\[6pt]
\min\{x,\,y/2\}, & 0 \leq x\leq 1,\,0\leq y\leq2, \\[6pt]
1, & x>1,\,\,y>2.
\end{cases}
\end{align}

\par The EOT value with $\gamma=100$ is approximately 0.1846, and I have compared the finite-sample performance of the EOT value estimators based on the sieve M-estimator and empirical Sinkhorn divergence. I have studied their performance using 1000 Monte Carlo draws from $R_\gamma$ to estimate the absolute deviation of their means from the target, 0.1846. I have set the sample size for both estimators to be the optimal one for the sieve-based estimator, given by display~(\ref{eq - Optimal Sample size}) in Section~\ref{Section Discussion}, and I repeat it here for convenience: $N_\ell= 2\epsilon_\ell^{-2}\log n_\ell$. The sieve M-estimator is $\hat{W}^2_2(P_X,P_Y)=-\gamma^{-1}\log\hat{\vartheta}_{\ell}+\gamma^{-1}\log a_\gamma$, where I have calculated $\hat{\vartheta}_{\ell}$ via the optimization problem~(\ref{eq - SAA Comp Stable}). Table~\ref{Table - uniforms promitives and output} reports the primitives of the optimization problem~(\ref{eq - SAA Comp Stable}). Table~\ref{Table - uniforms promitives and output} reports the primitives of the optimization problem~(\ref{eq - SAA Comp Stable}). I have obtained the values 0.0254 and 0.0375 for the sieve M-esitmator and the empirical Sinkhorn divergence, respectively. While this numerical result indicates the sieve M-estimator is less biased than its empirical Sinkhorn counterpart, it is more informative to report the boxplots of their MC estimates. The boxplots approximate the sampling distributions of estimators, and they are a powerful visual tool for illustrating finite-sample properties of the estimators.

\begin{table}[pt]
\caption{Primitives}\label{Table - uniforms promitives and output}
\centering
\bigskip
\begin{tabular}{c*{5}{c}}
 $\gamma$ & $\kappa$  & $\epsilon_\ell$    & $n_\ell$        & $N_\ell$   \\
\hline 
100 & 1        & 0.1            &  160        & 1015           \smallskip\\

\end{tabular}
\end{table} 

\begin{figure}[pt]
\centering
\includegraphics[width=0.8\textwidth]{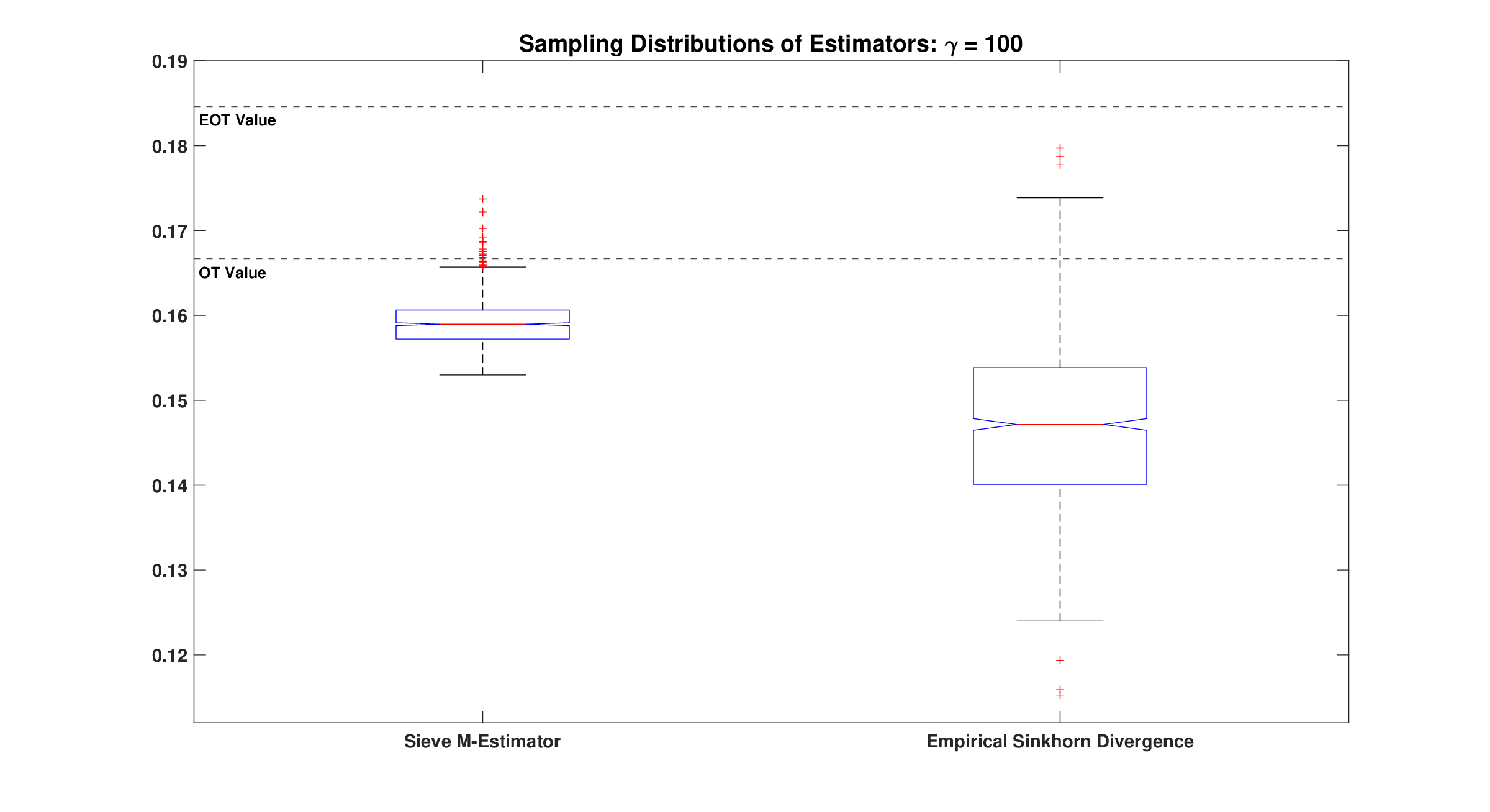}
\caption{Boxplots of simulated estimates}\label{Fig - 1}
\end{figure}

\par Figure~\ref{Fig - 1} reports the boxplots. Both estimators are centered below the OT benchmark, indicating a pronounced finite-sample downward bias in this high-regularization regime. This behavior contrasts with the population ordering, where the EOT value exceeds the unregularized OT cost, and highlights the impact of finite-sample effects and discretization error on the empirical objective. Relative to the empirical Sinkhorn divergence, the sieve M-estimator exhibits reduced variability and lower bias, suggesting improved stability at the theoretically prescribed sieve dimension and sample size.

\par The improved finite-sample performance of the sieve M-estimator relative to the empirical Sinkhorn estimator can be understood through the structure of their respective feasible sets. 
The population OT and EOT problems are characterized by exact marginal constraints, which may be equivalently expressed as an infinite collection of moment conditions.
The sieve M-estimator replaces this infinite system by a finite, accuracy-controlled set of moment constraints indexed by $\epsilon_\ell$, thereby constructing a deterministic approximation to the population constraint set.
As a result, even in finite samples, the sieve estimator optimizes over a feasible region that remains closely aligned with the population OT and EOT problems.

\par In contrast, the empirical Sinkhorn estimator enforces marginal constraints through empirical measures, leading to a feasible set that is random and subject to sampling fluctuations.
These fluctuations introduce additional variability and bias into the estimated transport cost, particularly in finite samples.
This distinction helps explain why, in the simulation results, the sieve M-estimator is both more concentrated and systematically closer to the population OT and EOT benchmark values than the empirical Sinkhorn estimator at the same sample size.

\end{document}